\theoremstyle{plain}
\newtheorem{teo}{Theorem}[section]      
\newtheorem{prop}[teo]{Proposition}    
\newtheorem{lem}[teo]{Lemma}            
\theoremstyle{definition}               
\theoremstyle{remark}                   
\newtheorem{oss}[teo]{Remark}
\newcommand{\re}{\mathbb{R}}
\newcommand{\N}{\mathbb{N}}
\numberwithin{equation}{section}
\begin{document}

\title{Interpolation inequalities in pattern formation}
\author[Eleonora Cinti]{Eleonora Cinti}
\address{E.C., Wierstrass Institute for Applied Analysis and Stochastics,
Mohrenstr. 39,
10117 Berlin (Germany) }

\email{cinti@wias-berlin.de}
\author[Felix Otto]{Felix Otto}

\address{F.O., Max Planck Institute for Mathematics in the Sciences\\
Inselstr. 22\\
04103 Leipzig (Germany)}  \email{otto@mis.mpg.de}
\thanks{
The first author was supported by grants MTM2011-27739-C04-01 (Spain), 2009SGR345 (Catalunya) and by the ERC Starting Grants ``AnOptSetCon'' n. 258685 and ``Epsilon'' n. 277749. 
}
\subjclass[2010]{49J40, 47J20}
\keywords{Interpolation inequalities, optimal transport, pattern formation.}
\begin{abstract}
We prove some interpolation inequalities which arise in the analysis of pattern formation in physics. They are the strong version of some already known estimates in weak form
that are used to give a lower bound of the energy in many contexts (coarsening and branching in micromagnetics and superconductors).
The main ingredient in the proof of our inequalities is a geometric construction which was first used by Choksi, Conti, Kohn, and one of the authors in \cite{CCKO} in the study of
branching in superconductors.
\end{abstract}
\maketitle
\section{Introduction}
In this paper we establish some interpolation inequalities which are connected with the study of certain physical phenomena. More precisely, the inequalities that we prove are the strong versions of some already known interpolation estimates in weak form that play a crucial role in the study of pattern formation in physics.

In many physical phenomena described by a variational model, in order to understand why certain patterns are observed, it is natural to study the features of pattern with close to minimal energy.
This requires a good understanding of at least the scaling of the minimal energy in terms of the model parameters. Upper bounds on the minimal energy are obtained by physically motivated trial patterns (Ansatz). Typically, Ansatz-free lower bounds rely on suitable interpolation inequalities that involve some functional norms related to the energy under consideration.

The first two  interpolation inequalities that we present here involve the BV-norm and the $\dot{H}^{-1}$-norm of a function $u$. The first estimate holds in any dimension $d$ and it
is stated in Proposition \ref{P1} below. The second inequality (see Proposition \ref{P2}) holds in dimension $2$ for functions bounded below, and it
improves the result in Proposition \ref{P1} by a logarithmic factor.

Here with $\dot{H}^{-1}$ we refer to the homogeneous negative Sobolev space $H^{-1}$.
Let $u$ be a periodic function defined on the torus $[0,\Lambda]^d$ and with vanishing average.
We recall that the $\dot{H}^{-1}$-norm of $u$ is defined as follows
\begin{eqnarray*}
 \|u\|_{\dot{H}^{-1}}=\| |\nabla|^{-1}u\|_2^2&:=&\inf_j\left\{\int_{[0,\Lambda]^d} |j|^2\:\big|\:j\;\mbox{is periodic and}\;\nabla \cdot j=u\right\}\\
&=& \int_{[0,\Lambda]^d} |\nabla \varphi|^2\quad \mbox{where}\;\;-\Delta \varphi =u.
\end{eqnarray*}
It can also be defined via Fourier transform:
 $$ \| |\nabla|^{-1}u\|_2^2=\int (|k|^{-1}|F(u)|)^2 dk,$$
 where $\int dk$ has to be interpreted in a discrete sense.

\begin{prop}\label{P1}
Let $u:[0,\Lambda]^d\rightarrow \re$ be a periodic function such that  $\int_{[0,\Lambda]^d} u=0$.

Then, there exists a constant $C>0$ only depending on $d$ such that
\begin{equation}\label{est1}
\|u\|_{\frac{4}{3}}\;\le\;C \|\nabla u\|_{1}^{\frac{1}{2}}\||\nabla|^{-1}u\|_{2}^{\frac{1}{2}}.
\end{equation}
\end{prop}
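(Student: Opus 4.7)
The inequality is invariant under the two-parameter rescaling $u(x)\mapsto \lambda\,u(\mu x)$, so after smoothing by convolution I would normalize $\|\nabla u\|_1=\||\nabla|^{-1}u\|_2=1$ and reduce to proving $\|u\|_{4/3}\le C(d)$. The BV and $\dot H^{-1}$ norms sit at opposite ends of the Sobolev ladder, and a soft argument through the embedding $W^{1,1}\hookrightarrow L^{d/(d-1)}$ only matches $L^{4/3}$ locally when $d\le 4$. To handle all dimensions, the two RHS quantities must be balanced at every spatial scale, which suggests a scale-adaptive geometric construction rather than a Fourier or soft-interpolation argument.

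\noindent\textbf{Adaptive dyadic partition.} Following the construction of \cite{CCKO} mentioned in the abstract, I would partition the torus into dyadic cubes $\{Q_j\}$ of sides $\ell_j$ by a stopping-time procedure. Letting $\varphi$ denote the potential with $-\Delta\varphi=u$, a cube $Q$ of side $\ell$ is kept as ``final'' as soon as the two local thresholds
\[
\int_Q|\nabla u|\ \lesssim\ \ell^{d-1},\qquad \int_{\hat Q}|\nabla\varphi|^2\ \lesssim\ \ell^{d+2}
\]
are simultaneously met on a bounded-overlap enlargement $\hat Q$ of $Q$; otherwise $Q$ is split into its $2^d$ dyadic children. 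The exponents $d-1$ and $d+2$ are the natural scalings of the BV and $\dot H^{-1}$ densities of a unit-size function on a cube of side $\ell$, and they are tuned so that $(\ell^{d-1})^{2/3}(\ell^{d+2})^{1/3}=\ell^d$, which is precisely the scaling of $\int_Q|u|^{4/3}$; in this way the pair of thresholds encodes the $\tfrac12$--$\tfrac12$ weighting of the statement.

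\noindent\textbf{Per-cube estimate, summation, and main obstacle.} On each final cube $Q_j$ I split $u=(u-\bar u_j)+\bar u_j$. The oscillation is controlled in $L^{4/3}(Q_j)$ by Sobolev--Poincar\'e in terms of $\int_{Q_j}|\nabla u|$, while the mean satisfies $|Q_j|\,\bar u_j=-\int_{\partial Q_j}\partial_\nu\varphi$ and is controlled by a trace/H\"older estimate in terms of $\int_{\hat Q_j}|\nabla\varphi|^2$. Combining the two yields a per-cube estimate
\[
\int_{Q_j}|u|^{4/3}\ \le\ C\Bigl(\int_{Q_j}|\nabla u|\Bigr)^{2/3}\Bigl(\int_{\hat Q_j}|\nabla\varphi|^2\Bigr)^{1/3},
\]
and summing over $j$ by H\"older's inequality for sequences, together with $\sum_j\int_{Q_j}|\nabla u|\le \|\nabla u\|_1$ and $\sum_j\int_{\hat Q_j}|\nabla\varphi|^2\lesssim \||\nabla|^{-1}u\|_2^{2}$ (the latter from bounded overlap), gives the desired $\|u\|_{4/3}^{4/3}\le C$. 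The main obstacle I expect is the construction itself: proving that the stopping-time procedure terminates on a genuine finite partition, controlling the overlap of the enlargements $\hat Q_j$ so that the global $\dot H^{-1}$ mass is only counted boundedly often, and checking that the threshold exponents give the per-cube bound with exactly the right homogeneity. Matching these scales in every dimension is the delicate point that the \cite{CCKO} geometric construction is designed to handle.
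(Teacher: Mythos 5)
Your approach is genuinely different from the paper's, and it also misattributes the method. The paper does \emph{not} prove Proposition \ref{P1} with the geometric construction of \cite{CCKO} (that construction, covering a single level set by balls, is used for the logarithmically improved estimate of Proposition \ref{P2}); it proves Proposition \ref{P1} with Ledoux's level-set argument \cite{Led}: introduce the signed characteristic function $\chi_\mu$ of the $\mu$-level set, mollify it on scale $R=\mu^{-1/3}$, write $\int\chi_\mu u=\int(\chi_\mu-\chi_{\mu,R})u+\int\chi_{\mu,R}u$, cut off the first term at level $M\mu$ so that the large-$u$ contribution can be reabsorbed, pair the second term against $\||\nabla|^{-1}u\|_2$ by $H^1$--$\dot H^{-1}$ duality, then integrate in $\mu$ with weight $\mu^{-2/3}$ and use the coarea formula. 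No spatial decomposition is needed; the integration over levels $\mu$ replaces it.

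Beyond the difference in route, your proposal has a genuine gap in the per-cube estimate. As written, the inequality
\[
\int_{Q_j}|u|^{4/3}\;\le\;C\Bigl(\int_{Q_j}|\nabla u|\Bigr)^{2/3}\Bigl(\int_{\hat Q_j}|\nabla\varphi|^2\Bigr)^{1/3}
\]
is false in general (take $u$ constant and nonzero on $Q_j$: the left side is positive while the first factor vanishes), so it can only hold by virtue of the stopping conditions, and you have not closed that loop. More concretely, the mean term $\bar u_j$ requires a bound on $\int_{\partial Q_j}\partial_\nu\varphi$, but the $L^2$ control $\int_{\hat Q_j}|\nabla\varphi|^2$ does not give a trace of $\nabla\varphi$ on $\partial Q_j$ without invoking second derivatives of $\varphi$ (i.e.\ $L^2$ regularity for $-\Delta\varphi=u$), which would require $u\in L^2$ --- exactly the kind of information that is unavailable at the $q=1$ end-point and that makes this estimate hard. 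Choosing a ``good slice'' to avoid the trace issue is incompatible with the cubes being fixed by the stopping time. Also, after splitting $u=(u-\bar u_j)+\bar u_j$ the natural per-cube bound is additive, not multiplicative; converting it to the product form you need for the H\"older summation again requires invoking the stopping thresholds, and the stopping rule is an ``or'' on the parent cube, so the accounting is not as clean as stated. Finally, the Sobolev--Poincar\'e embedding $W^{1,1}(Q_j)\hookrightarrow L^{4/3}(Q_j)$ only holds (with the right homogeneity) for $d\le 4$, so even the oscillation part needs an argument in higher dimension. None of these obstacles appears in the paper's level-set proof, which is precisely why the Ledoux device is used here.
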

Let us comment on the nature of \eqref{est1}, which is the model of three further interpolation estimates appearing in this paper. Its interest for applications is that it relates:
\begin{itemize}
\item the $L^1$-norm of $\nabla u$, with its obvious geometric interpretation via the coarea formula and its ensuing connection to an interfacial energy;
\item the $L^2$-norm of $\nabla \varphi$ with $-\Delta \varphi=u$: $\int |\nabla \varphi|^2$ is a prototype for a field energy, e.g. the electrostatic energy of $u$ interpreted as a charge distribution;
\item some $L^p$-norm of $u$ itself, which typically can be estimated below using the non convex features of the model where $u$ plays the role of an order parameter.
\end{itemize}

Let us go one step deeper: \eqref{est1} is one end-point estimate in the one-parameter family of Gagliardo-Nirenberg estimates
\begin{equation}\label{GN}\|u\|_p\leq C\|\nabla u\|_q^{\frac{1}{2}}\| |\nabla|^{-1}u\|_2^{\frac{1}{2}},\quad 1\leq q\leq \infty,
\end{equation}
where the integrability exponent $p$ on the left-hand side is determined by the relation
$$\frac{1}{p}=\frac{1}{2}\cdot\frac{1}{q}+\frac{1}{2}\cdot\frac{1}{2},$$
that is $p=\frac{4q}{2+q}$. This exponent is the only one compatible with rescaling of length. At the same time, the estimates \eqref{GN} are ``extensive'', that is, compatible with taking volume averages, i.e. passing from $\int_{[0,\Lambda]^d}dx$ to $\Lambda^{-d}\int_{[0,\Lambda]^d}dx$, as can be seen from rewriting \eqref{GN} as
$$
\left(\int_{[0,\Lambda]^d} |u|^pdx\right)^{\frac{1}{p}}\leq C \left(\int_{[0,\Lambda]^d} |\nabla u|^q\right)^{\frac{1}{2}\cdot\frac{1}{q}}\left(\int_{[0,\Lambda]^d} | |\nabla|^{-1}u|^2\right)^{\frac{1}{2}\cdot\frac{1}{2}}.
$$
This extensivity is a crucial feature in the applications: the estimate should be compatible with taking volume averages, since it should be oblivious to the artificially introduced period $\Lambda$.
In this sense, the family of estimates \eqref{GN} is ``orthogonal''  to Sobolev estimates, which are saturated by localized functions.

Among the family of estimates \eqref{GN}, the one with $q=2$ (and thus $p=2$) is particularly simple and can be established by Fourier representation. The estimates for $1<p<\infty$ are fairly easy to prove using Calderon-Zygmund theory, i.e. the maximal regularity for $-\Delta$ in $L^q$-spaces. The latter is no longer available for the end-point $q=1$ and thus it is not surprising that this estimate was only recently established. It was first proved by Cohen, Dahmen, Daubechies, and DeVore (see Theorem 1.5 in \cite{CDDD}) using the wavelet analysis of the space BV, while our proof uses a technique introduced by Ledoux (see Theorem 1 in \cite{Led}) to give a direct proof of some improved Sobolev inequalities.

The following proposition improves the result in Proposition \eqref{P1} by a factor $\ln^{\frac{1}{4}}u$ and it holds in dimension $d=2$ for functions bounded below.

\begin{prop}\label{P2}
Let $u:[0,\Lambda]^2\rightarrow \re$ be a periodic function such that $u\geq -1$ and $\int_{[0,\Lambda]^d} u=0$.

Then, there exists a constant $C>0$ such that
\begin{equation}\label{inter-est}
\|u\ln^{\frac{1}{4}}\max\{u,e\}\|_{\frac{4}{3}}
\;\leq\;C\|\nabla u\|_1^\frac{1}{2}\;\||\nabla|^{-1}u\|_2^\frac{1}{2}.\end{equation}
\end{prop}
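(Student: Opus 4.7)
Plan. The new ingredient over Proposition~\ref{P1} is that $u\ge -1$ and $\int u =0$ force the \emph{a priori} bound
$$\bigl|\{u > t\}\bigr|\;\le\;\frac{\Lambda^2}{1+t}\qquad\text{for all }t\ge 0$$
via Chebyshev: this is where the two-dimensional setting enters essentially, since $\Lambda^2$ is the area of the torus. My approach is to combine this bound with Proposition~\ref{P1} applied to dyadic slices of $u_+$.

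First I would introduce the dyadic truncations $u^{(k)} := (u-2^k)_+\wedge 2^k$ for $k=0,1,2,\ldots$. Each $u^{(k)}$ is nonnegative, bounded by $2^k$, supported in $\{u>2^k\}$ (hence of area at most $\Lambda^2 2^{-k}$), and its gradient is $\nabla u\,\mathbf{1}_{\{2^k<u<2^{k+1}\}}$, so $\sum_k\|\nabla u^{(k)}\|_1 \le \|\nabla u\|_1$ by disjointness of the annuli. Since on the slice $\{2^k<u<2^{k+1}\}$ one has $\ln^{1/3}\max\{u,e\}\asymp 1+k^{1/3}$, the $4/3$-power of the left-hand side of~\eqref{inter-est} is controlled by
$$
\int|u|^{4/3}\ln^{1/3}\max\{u,e\}\,dx\;\lesssim\;\|u\|_{4/3}^{4/3} + \sum_{k\ge 1}(1+k)^{1/3}\,\|u^{(k)}\|_{4/3}^{4/3}.
$$

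To each slice I would then apply Proposition~\ref{P1} to the mean-zero function $\tilde u^{(k)} = u^{(k)}-\bar u^{(k)}$, obtaining
$$
\|u^{(k)}\|_{4/3}\;\lesssim\;\|\nabla u^{(k)}\|_1^{1/2}\,\|\,|\nabla|^{-1}\tilde u^{(k)}\|_2^{1/2}
$$
(the mean correction being negligible because $|\mathrm{supp}\,u^{(k)}|\lesssim \Lambda^2 2^{-k}$). The final desired estimate would then follow by summing in $k$ and applying H\"older in the dyadic variable against a summable weight balancing the $(1+k)^{1/3}$ factor with the geometric decay of the slice supports.

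The main obstacle is controlling the family $\{\|\,|\nabla|^{-1}\tilde u^{(k)}\|_2\}_k$ by $\||\nabla|^{-1} u\|_2$, since truncation is \emph{not} continuous in $\dot{H}^{-1}$. Here is where the geometric construction from~\cite{CCKO} promised in the abstract must enter: one builds, for each $k$, a flux vector field $j^{(k)}$ with $\nabla\cdot j^{(k)}=\tilde u^{(k)}$ by redistributing the optimal flux $j$ for $u$ on a covering of the super-level set $\{u>2^k\}$ by squares whose sidelength is tuned to the local scale, in the spirit of~\cite{CCKO}. The area bound $|\{u>2^k\}|\lesssim \Lambda^2 2^{-k}$ controls the overlap of these coverings as $k$ varies, and an $L^2$-orthogonality-type estimate of the form $\sum_k \int|j^{(k)}|^2\lesssim \int|j|^2\cdot(\text{summable factors})$ is what produces the logarithmic factor after Cauchy--Schwarz, yielding precisely the $\ln^{1/4}$ weight on the left-hand side of~\eqref{inter-est} (the exponent $1/4$ being the $3/4$-root of the $1/3$ picked up in the $4/3$-power of the logarithm).
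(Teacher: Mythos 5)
You have correctly located the crux --- super-level-set truncation is not bounded on $\dot H^{-1}$ --- but the proposal does not resolve it, and I do not see the dyadic-slice route closing as sketched. After applying Proposition~\ref{P1} to each slice $u^{(k)}$ and H\"older in $k$ (with exponents $\tfrac32$ and $3$ to pair with $\sum_k\|\nabla u^{(k)}\|_1\le\|\nabla u\|_1$), the quantity you must bound is $\sum_k (1+k)\,\||\nabla|^{-1}\tilde u^{(k)}\|_2^2$, so the \emph{growing} weight $(1+k)$ has to be beaten. Nothing in the sketch supplies that decay: the ``$L^2$-orthogonality'' estimate you invoke is not an ingredient of \cite{CCKO}, and it cannot hold in the form you need --- for $u$ consisting of a single tall narrow peak, essentially the full $\dot H^{-1}$ mass of $u$ is present at every dyadic height simultaneously, so $\||\nabla|^{-1}\tilde u^{(k)}\|_2$ does not decay in $k$. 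The appeal to the area bound $|\{u>2^k\}|\lesssim\Lambda^2 2^{-k}$ does not help here, because $\dot H^{-1}$ does not see support measure in any direct way.

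The paper obtains the logarithmic gain by a different mechanism, one which does not rely on any cancellation across scales. It follows Ledoux's scheme from the proof of Proposition~\ref{P1}: the level-set inequality $\int\chi_\mu \lesssim R\int|\nabla\chi_\mu| + \int\chi_\mu\phi_{\mu,R,L}$ is integrated over $\mu\in(M,\infty)$ with weight $(\mu\ln\mu)^{1/3}\,d\mu/\mu$, and the resulting term $\int\bigl(\int \tfrac{\ln^{1/3}\mu}{\mu^{2/3}}\phi_{\mu,R,L}\,d\mu\bigr)u\,dx$ is handled by $H^1/\dot H^{-1}$ duality. The gain is already present at each fixed level $\mu$ and comes from the two-dimensional capacity estimate of Lemma~\ref{geom}: the potential $\phi_{\mu,R,L}\in[0,1]$, built from logarithmic capacity potentials of balls $B_R$ inside $B_L$, satisfies $\int|\nabla\phi_{\mu,R,L}|^2\lesssim R^{-2}\ln^{-1}(L/R)\int\chi_\mu$. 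Choosing $(L/R)^2\sim\mu$ --- which is permitted because $u\ge -1$ lets one bound $\int\chi_\mu\phi\le\mu^{-1}\int\phi(u+1)$ and then absorb $\mu^{-1}\int\phi\lesssim\mu^{-1}(L/R)^2\int\chi_\mu$ --- turns $\ln^{-1}(L/R)$ into $\ln^{-1}\mu$. Your proposal is missing this capacity input, and the Chebyshev consequence $|\{u>t\}|\lesssim\Lambda^2/t$ is not how the hypothesis $u\ge -1$ is actually exploited.
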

The main ingredient in the proof of Proposition \ref{P2} is a geometric construction (see Lemma \ref{geom}), which was first used by Choksi, Conti, Kohn, and  one of the authors in \cite{CCKO} in the context of branched patterns in superconductors.

These two interpolation inequalities are naturally connected to many physical problems (coarsening, domain branching in ferromagnets, superconductors,
twin branching in shape memory alloys) whose energy is given by the competition of two main terms: an interfacial energy (described by a BV-norm) and a field energy (described by an $\dot{H}^{-1}$-norm ).
In Section 2 we will explain how estimates \eqref{est1} and \eqref{inter-est} are used to give lower bound for the energy in two different problems: coarsening and micromagnetics.

Our last two interpolation inequalities are connected with
 the study of branching in superconductors and they both involve the Wasserstein distance.

We recall (see for instance \cite{Vil}) that
the Wasserstein distance between two measures with densities $u$ and $v$ is given by
\begin{equation}\label{was}W_2^2(u,v):=\inf\left\{\int \int |x-y|^2 d\pi(x,y)|\int d\pi(\cdot, y)=u,\;\int d\pi(x,\cdot)=v\right\}.\end{equation}

The measure on the product space $\pi$ is called \textit{transportation plan} and it is admissible if its projections to first and second coordinates are measures
with densities $u$ and $v$ respectively.


A useful property of the Wasserstein distance is the \textit{Kantorovich duality} (see Chapter 5 in \cite{Vil}), which allows to write the Wasserstein distance in a dual form in the following way:
\begin{equation}\label{Kan}W_2^2(u,v)=\sup\left\{ \int u(x)\varphi(x) dx + \int v(y)\psi(y)dy\:|\: \varphi(x)+\psi(y)\leq |x-y|^2\right\}.\end{equation}
This property will be useful in the proof of Propositions \ref{strong-W} and \ref{cor} below.

Loosely speaking, the Wasserstein distance between $u$ and $v$ is a nonlinear version of a negative norm of $u-v$. The first proposition states an interpolation estimate with the $\dot H^{-1}$-norm of $u$ (or rather of $u-1$) replaced by $W_2(u,1)$.
\begin{prop}\label{strong-W}
 Let $u:[0,\Lambda]^d\rightarrow \re$ be a periodic function such that $u\geq 0$ and $\Lambda^{-d}\int_{[0,\Lambda]^d} u=1$.

Then, there exists a constant $C>0$ only depending on $d$ such that
\begin{equation}\label{strong-W-eq}
 \|(u-C)_+\|_{\frac{2+3d}{3d}}\leq C\left(\|\nabla u\|_1\right)^{\frac{2d}{2+3d}} (W_2^2(u,1))^{\frac{d}{2+3d}},
\end{equation}
where $(u-C)_+:=\max\{u-C,0\}$.
\end{prop}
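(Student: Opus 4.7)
The plan is to adapt the Ledoux-style heat-kernel splitting that underlies Proposition~\ref{P1}, with $\||\nabla|^{-1}u\|_2$ replaced by $W_2(u,1)$. Let $G_s$ denote the heat kernel on the torus at time $s>0$, set $u_s := u * G_s$, and decompose
\[
 \{u>t\} \;\subset\; \{|u-u_s|>t/4\} \;\cup\; \{u_s-1>t/2\}.
\]
On the rough side, the standard $L^1$ smoothing estimate $\|u-u_s\|_1 \le C\sqrt{s}\|\nabla u\|_1$ combined with Markov's inequality yields $|\{|u-u_s|>t/4\}| \le C\sqrt{s}\|\nabla u\|_1/t$.

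On the smooth side, I would use an optimal $W_2$-plan $\pi$ between $u$ and~$1$ (so that $\int d\pi(\cdot,y)=u$ and $\int d\pi(x,\cdot)=1$) to represent
\[
 u_s(x)-1 \;=\; \int \bigl(G_s(x-y) - G_s(x-z)\bigr)\,d\pi(y,z),
\]
and combine Cauchy--Schwarz with respect to $\pi$ with the gradient bound $\|\nabla G_s\|_\infty \lesssim s^{-(d+1)/2}$ to produce a pointwise control of the form $\|u_s-1\|_\infty \le C\,s^{-\alpha}\,W_2(u,1)$. For each $t$ larger than a dimensional constant~$C$, I would then choose $s=s(t)$ so that this right-hand side is $\le t/4$, rendering $\{u_s-1>t/2\}$ empty. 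The resulting distribution-function bound, inserted into the layer-cake identity
\[
 \|(u-C)_+\|_p^p \;=\; p\int_C^{\infty} (t-C)^{p-1}\,|\{u>t\}|\,dt, \qquad p=\tfrac{2+3d}{3d},
\]
should yield the announced inequality after explicit $t$-integration (using $(t-C)^{p-1}\le t^{p-1}$ since $p\le 2$). The threshold $C$ in $(u-C)_+$ is picked to absorb the small-$t$ regime where the balance $\|u_s-1\|_\infty\le t/4$ cannot be achieved.

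The main obstacle is the Wasserstein-to-$L^\infty$ control: applying Cauchy--Schwarz on $\pi$ naively costs a factor $(\int d\pi)^{1/2}=\Lambda^{d/2}$ and forces $\alpha=(d+1)/2$, after which the layer-cake integral turns out to be only marginally convergent (indeed divergent for $d=1,2$), and the resulting powers of $\|\nabla u\|_1$ and $W_2$ do not reproduce the announced exponents $2d/(2+3d)$. To render the estimate genuinely local --- and sharp uniformly in $d$ --- one presumably has to replace the Gaussian mollifier by the geometric construction (Lemma~\ref{geom}) of Choksi--Conti--Kohn--Otto~\cite{CCKO}. This construction should supply a partition of the torus into subregions on which the local transport cost is controlled by $W_2(u,1)$ and on which a localized Lipschitz-type estimate controls the local oscillation of $u_s$, so that summing the local bounds no longer produces the spurious $\Lambda^{d/2}$ factor and the optimization closes with the desired exponents.
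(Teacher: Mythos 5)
Your self-diagnosis is accurate: the primal approach (representing $u_s-1$ via a transport plan and applying Cauchy--Schwarz with respect to $\pi$) does not close, for exactly the reason you name --- it is not extensive in $\Lambda$. But the fix you gesture at is not the one that works, and your sketch stops short of the argument's actual hinge. The paper's proof contains no heat-kernel mollification of $u$, no partition of the torus into subregions, and no ``local transport cost''. What replaces your primal step is the \emph{Kantorovich duality} \eqref{Kan}, and that is the idea your proposal is missing.

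Concretely, the paper first builds (via Remark~\ref{geom-d}, the $d$-dimensional version of Lemma~\ref{geom}, with $\phi_{\mu,R}$ simply the indicator of $\bigcup_{i=1}^N B_R(y_i)$) the bound $\int\chi_\mu\lesssim R\int|\nabla\chi_\mu|+\frac{1}{\mu}\int\phi_{\mu,R}\,u$, multiplies by $\mu^{\frac{2+3d}{3d}}\frac{d\mu}{\mu}$, chooses $R\sim\mu^{-\frac{2}{3d}}$, and integrates over $\mu\geq\varepsilon^{-d}$. The coarea formula handles the first term. For the second term, setting $\varphi=\int_{\varepsilon^{-d}}^\infty\mu^{\frac{2}{3d}}\phi_{\mu,R}\,\frac{d\mu}{\mu}$ and applying \eqref{Kan} with the rescaled cost $|x-y|^2/\varepsilon^2$ gives
$\int\varphi\,u\leq\frac{1}{\varepsilon^2}W_2^2(u,1)+\int\psi$, where $\psi$ is the $c$-transform of $\varphi$. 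The whole weight of the argument then falls on estimating $\int\psi$: because $\phi_{\mu,R}$ is an indicator of $N\lesssim R^{-d}\int\chi_\mu$ balls of radius $R$, the function $\widetilde\psi_\mu(y)=2\sup_x\{C_1\mu^{\frac{2}{3d}}\phi_{\mu,R}(x)-|x-y|^2/\varepsilon^2\}_+$ is supported on $N$ balls of radius $\lesssim\varepsilon\mu^{\frac{1}{3d}}$ with height $\lesssim\mu^{\frac{2}{3d}}$, which yields $\int\widetilde\psi_\mu\lesssim\varepsilon^d\mu^{\frac{2+3d}{3d}}\int\chi_\mu$. A dyadic trick (Claims~A and~B in the paper's Step 3) then converts the $c$-transform of the $\mu$-integral into a $\mu$-integral of $c$-transforms, so that $\int\psi$ is absorbed into the left-hand side for $\varepsilon$ small. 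None of this resembles ``a localized Lipschitz-type estimate on the oscillation of $u_s$''; the $L^\infty$/Lipschitz bounds that appear are on the $c$-transform $\widetilde\psi_\mu$, not on a mollified $u$, and the Wasserstein term enters wholesale through duality, not through localization. Your instinct that Lemma~\ref{geom} is essential is right, but without the Kantorovich duality and the $c$-transform structure the proof cannot be completed along the lines you propose.
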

In the recent paper \cite{Led2}, M. Ledoux showed that our inequality \eqref{strong-W-eq} is actually a particular case in a all family of
interpolation inequalities (called Sobolev-Kantorovich inequalities) that  hold in the more general setting of non-negatively curved (weighted) Riemannian manifolds. The proofs in \cite{Led2} rely on the  use of heat flows and Harnack inequalities.

Let us comment a bit on  interpolation estimate \eqref{strong-W-eq}. Because of the formal scalings
\begin{equation*}
\begin{split}
\|\nabla u\|_1&\;\;\mbox{has unit of}\;\;\mbox{length}^{-1}\times \mbox{volume},\\
W_2^2(u,1)&\;\;\mbox{has unit of}\;\;\mbox{length}^{2}\times \mbox{volume},\\
\|(u-C)_+\|_p&\;\;\mbox{has unit of}\;\;\hspace{4em} \mbox{volume}^{\frac{1}{p}},\end{split}\end{equation*}
the pair of exponents on the right-hand side of \eqref{strong-W-eq} is determined by the integrability exponent $p=\frac{2+3d}{3d}$ appearing on the left-hand side. The pair of exponents is the only one compatible with rescaling of length and taking the volume average. The exponent $p$ could not be inferred by a simple scaling argument. It is the exponent that appears on the left-hand side in the \emph{linear} interpolation estimate
$$\|u\|_{\frac{2+3d}{3d}}\leq C\|\nabla u\|_1^{\frac{2d}{2+3d}} \| |\nabla|^{-\frac{2d}{d+2}} u \|_{\frac{d+2}{d}}^{\frac{d+2}{2+3d}}.$$
In this sense, $W_2^2(u,1)$, at least as seen from the ``peaks'' of $u$ (i.e. the part of $u$ that is much larger than $1$), behaves as the negative fractional Sobolev norm
$$\| |\nabla|^{-\frac{2d}{d+2}} u \|_{\frac{d+2}{d}}^{\frac{d+2}{d}}=\int | |\nabla|^{-\frac{2d}{d+2}}u|^{\frac{d+2}{d}}.
$$

%

%
The last nonlinear interpolation estimate of this paper replaces the negative norm in Proposition \ref{P1} and the Wasserstein distance in Proposition \ref{P2} by a \emph{mixture} of both. We recall that by the $K$-method of interpolation spaces (see Chapter 3 in \cite{BL}) a norm $\|\cdot\|$ intermediate between some norms $\|\cdot\|_{(0)}$ and $\|\cdot\|_{(1)}$ can be obtained via the construction
$$
\|u\|=\sup_{\mu>0} \inf_{v:\re^d\rightarrow \re}\left\{ \mu \|u-v\|_{(0)}+ \mu^{-\frac{1-\theta}{\theta}}\|v\|_{(1)} \right\}.$$
By the equivalence of approximation and interpolation theory (see Chapter 7 in \cite{BL}) in term of
$$\|u\|\sim \left(\sup_{M>0} M^{\frac{\theta}{1-\theta}} \inf_{v:\re^d\rightarrow \re}\left\{  \|u-v\|_{(0)}\:\big|\: \|v\|_{(1)}\leq M\right\}\right)^{1-\theta},$$
we also obtain a representation with mixed homogeneity
$$
\|u\|\sim \left(\sup_{\mu>0} \inf_{v:\re^d\rightarrow \re}\left\{ \mu \|u-v\|_{(0)}+ \left(\mu^{-\frac{1-\theta}{\theta}}\|v\|_{(1)}\right)^2 \right\}\right)^{\frac{2-\theta}{2}}.
$$

In Proposition \ref{cor}, with an application described in the next section in mind, we replace $\|u-v\|_{(0)}$ by $W_2^2(u,v)$ and $\|v\|_{(1)}$ by the homogeneous fractional Sobolev norm $\| |\nabla|^{-\frac{1}{2}} v\|_2$, and choose $\theta=\frac{4}{d+3}$. This leads to
\begin{eqnarray}\label{last1}
&\displaystyle \sup_{\mu>0}\inf_{v:{\re^d}\rightarrow \re}\left\{\mu W_2^2(u,v)+\left(\mu^{-\frac{1}{4}(d-1)} \| |\nabla|^{-\frac{1}{2}} v\|_2\right)^2\right\}^{\frac{d+1}{d+3}}\\
&\hspace{1em}=\displaystyle \sup_{\nu>0}\inf_{v:{\re^d}\rightarrow \re}\left\{\nu^{\frac{2}{d+1}} W_2^2(u,v)+\nu^{-\frac{d-1}{d+1}} \| |\nabla|^{-\frac{1}{2}} v\|_2^2\right\}^{\frac{d+1}{d+3}}.\nonumber
\end{eqnarray}
%
\begin{prop}\label{cor}
Let $u:\re^d\rightarrow \re$ be such that $u \geq 0$ and $\int u< \infty$.

Then there exists a constant $C>0$ only depending on $d$ such that

\[ \label{ineq-H-sup}
\|u\|_{\frac{3d+3}{3d+1}}\leq C \|\nabla u\|_1^{\frac{2d}{3d+3}}
\cdot \sup_{\nu>0}\inf_{v:{\re^d}\rightarrow \re}\left\{\nu^{\frac{2}{d+1}}W_2^2(u,v)+\nu^{-\frac{d-1}{d+1}}\| |\nabla|^{-\frac{1}{2}}v\|^2_2\right\}^{\frac{1}{3}}.
\]
\end{prop}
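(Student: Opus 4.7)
The plan is to adapt the layer-cake / Ledoux strategy behind the proof of Proposition \ref{P1} to the non-linear sup-inf quantity appearing on the right-hand side. Setting $p:=\tfrac{3d+3}{3d+1}$ and using the layer-cake identity $\|u\|_p^p = p\int_0^\infty \lambda^{p-1}|\{u>\lambda\}|\,d\lambda$, it suffices to prove a weak-type bound on $|\{u>\lambda\}|$ with the correct scaling in $\lambda$, $\|\nabla u\|_1$ and the sup-inf. For each $\lambda>0$ I would first pick an almost-optimal pair $(\nu_\lambda,v_\lambda)$ realising the sup-inf (with $\nu_\lambda$ still to be chosen) and then convolve with the heat kernel at a scale $t=t(\lambda)$, using the inclusion
\begin{equation*}
\{u>\lambda\}\;\subset\;\{u-P_tu>\tfrac{\lambda}{2}\}\cup\{P_tv_\lambda>\tfrac{\lambda}{4}\}\cup\{P_t(u-v_\lambda)>\tfrac{\lambda}{4}\}.
\end{equation*}

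Three ingredients then control the three sets above. First, the classical BV/heat bound $\|u-P_tu\|_1\lesssim t^{1/2}\|\nabla u\|_1$ (already used in the proof of Proposition \ref{P1}), combined with Chebyshev at level $\lambda$, handles the first set. Second, the Bernstein-type bound $\|P_tv_\lambda\|_\infty\lesssim t^{-(d+1)/4}\,\|\,|\nabla|^{-1/2}v_\lambda\|_2$, which follows from Plancherel together with $\|\,|\nabla|^{-1/2}v\|_2^2 = \int|k|^{-1}|\hat v(k)|^2\,dk$, makes the second set empty once $t$ is large enough in terms of $\|\,|\nabla|^{-1/2}v_\lambda\|_2$ and $\lambda$. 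Third, Kantorovich--Rubinstein duality for $W_1$ applied to the $1$-Lipschitz rescaled test function $t^{(d+1)/2}K_t(x-\cdot)$, combined with the Cauchy--Schwarz inequality $W_1(u,v_\lambda)\leq \sqrt{(\int u)\,W_2^2(u,v_\lambda)}$ on transport plans, yields
\begin{equation*}
\|P_tu-P_tv_\lambda\|_\infty\;\lesssim\;t^{-(d+1)/2}\,\sqrt{\textstyle\int u\,\cdot\,W_2^2(u,v_\lambda)}\,,
\end{equation*}
which empties the third set once $t$ is large in terms of $W_2^2(u,v_\lambda)$ and $\lambda$.

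Taking $t$ equal to the maximum of the two emptying thresholds, and choosing $\nu_\lambda$ so as to balance the two remaining contributions (the sup-inf then redistributing the cost between $W_2^2(u,v_\lambda)$ and $\|\,|\nabla|^{-1/2}v_\lambda\|_2^2$), one should obtain a weak-type estimate of the form
\begin{equation*}
|\{u>\lambda\}|\;\lesssim\;\lambda^{-p}\,\|\nabla u\|_1^{\frac{2dp}{3(d+1)}}\,(\text{sup-inf})^{p/3}
\end{equation*}
on a suitable range of $\lambda$, the small-$\lambda$ region being absorbed by the trivial bound $|\{u>\lambda\}|\leq\lambda^{-1}\int u$ as in Ledoux's original argument. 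Integrating in $\lambda$ then produces the claimed strong $L^p$ estimate. The main obstacle is the bookkeeping of the three independent scales -- the heat scale $t^{1/2}$, the K-method scale $\nu_\lambda^{-1/(d+1)}$ and the level-set scale -- together with the fact that $W_2^2$ has homogeneity degree two (rather than one, as a genuine negative norm would) in distance: it is precisely this discrepancy which forces the Kantorovich step to output a $\sqrt{W_2^2\cdot\mathrm{mass}}$ factor, and which ultimately yields the non-obvious exponent $1/3$ (rather than $1/2$) on the sup-inf.
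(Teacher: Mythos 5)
Your plan takes a genuinely different route from the paper's: the paper deduces Proposition \ref{cor} by a pure scaling reduction from the additive estimate of Proposition \ref{strong-W-H}, and that proposition is in turn proved via the ball-covering construction of Lemma \ref{geom} (the potential $\phi_{\mu,R}$ as an indicator of a controlled number of balls) combined with Kantorovich duality for the \emph{quadratic} cost, i.e.\ the inequality $\int\varphi\,u\leq W_2^2(u,v)+\int \varphi^c\,v$, followed by an estimate of $\|\,|\nabla|^{1/2}\varphi^c\|_2$. Your proposal replaces all of this with heat-kernel mollification, a Bernstein bound for the $\dot H^{-1/2}$ piece, and Kantorovich--Rubinstein duality for $W_1$.

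The gap is in the third ingredient, and it is not a bookkeeping issue but a homogeneity obstruction. Passing from $W_1$ to $W_2$ via Cauchy--Schwarz on a transport plan forces
$$
W_1(u,v)\;\leq\;\Bigl(\textstyle\int u\Bigr)^{1/2}\,W_2(u,v),
$$
so your bound $\|P_t(u-v_\lambda)\|_\infty\lesssim t^{-(d+1)/2}\sqrt{\int u\cdot W_2^2(u,v_\lambda)}$ carries a factor $(\int u)^{1/2}$. This is unavoidable on dimensional grounds: if $u,v$ are dimensionless and $x$ carries units of length $L$, then $W_1\sim L^{d+1}$ while $W_2\sim L^{(d+2)/2}$, and the missing $L^{d/2}$ is exactly $(\int u)^{1/2}$. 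Tracing this factor through the choice of $t=\max\{t_2,t_3\}$ and the balancing of $\nu_\lambda$ leaves a residual power of $\int u$ in the weak-type estimate that cannot be absorbed into $\|\nabla u\|_1$ or the $\sup_\nu\inf_v$ quantity; the target inequality contains no such factor. (You can also check by direct computation that your weak-type bound comes out with level exponent $(d+1)/d$ rather than $(3d+3)/(3d+1)$, which would be harmless for Marcinkiewicz purposes, but the spurious $(\int u)$-power persists after interpolation against the trivial $\lambda^{-1}\int u$ bound.) The paper avoids this entirely because the Kantorovich step is performed at the level of the \emph{quadratic} cost $|x-y|^2$: the test function $\varphi$ is paired directly against $W_2^2(u,v)$ plus the $c$-transform $\int\varphi^c v$, so $W_2^2$ enters linearly with no mass factor. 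Your closing remark attributes the exponent $1/3$ to the appearance of $\sqrt{W_2^2\cdot\text{mass}}$; in fact that factor must not appear, and the exponent $1/3$ comes out of the correct homogeneity of $W_2^2$ itself (length$^2\times$volume), as laid out in the paper's scaling discussion.

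A secondary caution: "prove a weak-type bound and integrate in $\lambda$" is not automatic — that passage is precisely the content of the paper (see the contrast between Lemma \ref{lem-weak} and Proposition \ref{P1}, where Ledoux's large parameter $M$ and the absorption argument are needed). Because your weak exponent $(d+1)/d$ strictly exceeds the critical $p=(3d+3)/(3d+1)$, real interpolation against the trivial endpoint does produce a strong bound, so this point is not fatal here. The essential defect is the $\int u$ factor. To salvage a heat-flow proof one would need a genuinely quadratic comparison between $P_tu$ and $P_tv$ in terms of $W_2^2$, as in Ledoux's heat-flow/Harnack approach \cite{Led2}, rather than the linearization through $W_1$.
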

In our proof, we will use the following representation of the fractional Sobolev norms
\begin{equation}\label{H-norm}\| |\nabla|^{-\frac{1}{2}} f\|_2:=\int_{\re^d}\int_{\re^d} \frac{|f(x)-f(\overline x)|^2}{|x-\overline x|^{d-1}}dx d\overline x.\end{equation}

Let us comment on Proposition \ref{cor}. We claim that it can be interpreted in the sense of ``$W_2^2(u,v)$ acts as $\| |\nabla|^{-2}(u-v)\|_1$''. Indeed, if in \eqref{last1} we replace $W_2^2(u,v)$ by  $\| |\nabla|^{-2}(u-v)\|_1$, we have
$$
\sup_{\mu>0}\inf_{v:\re^d\rightarrow \re}\left\{\mu \| |\nabla|^{-2} (u-v)\|_1 +\left(\mu^{-\frac{1}{4}(d-1)} \| |\nabla|^{-\frac{1}{2}}v\|_2\right)^2\right\}^{\frac{d+1}{d+3}}.
$$
According to the above discussion, this expression is equivalent to
$$\sup_{\mu>0}\inf_{v:\re^d\rightarrow \re}\left\{\mu \| |\nabla|^{-2} (u-v)\|_1 +\mu^{-\frac{1-\theta}{\theta}} \| |\nabla|^{-\frac{1}{2}}v\|_2\right\},
$$
where $\theta=\frac{4}{d+3}$. The norm of this interpolation space $[\dot H_1^{-2},\dot H_2^{-\frac{1}{2}}]_\theta$ has the same scaling as the norm of the fractional Sobolev space $\dot H^{(1-\theta)(-2)+\theta\left(-\frac{1}{2}\right)}_{\frac{1}{(1-\theta)1+\theta\frac{1}{2}}}=\dot H^{-\frac{2d}{d+3}}_{\frac{d+3}{d+1}}$, that is
$$
\| |\nabla|^{-\frac{2d}{d+3}}u\|_{\frac{d+3}{d+1}}.$$
With this substitution, the nonlinear interpolation estimate of Proposition \ref{cor} turns into the much more standard linear interpolation estimate
$$
\|u\|_{\frac{3d+3}{3d+1}}\leq C\|\nabla u\|_1^{\frac{2d}{3d+3}} \| |\nabla|^{-\frac{2d}{d+3}} u\| _{\frac{d+3}{d+1}}^{\frac{d+3}{3d+3}}.
$$
In this sense, $W_2^2(u,v)$ acts as $\| |\nabla |^{-2}(u-v)\|_1$.

When deriving an Ansatz-free lower bound in the physical applications, only a weak version of these four interpolation inequalities is needed, in the sense that it is enough to have these estimates with the
$L^p$-norms on the left-hand side replaced by the weak $L^p_w$-norm. The contribution of this paper is to pass from the weak formulation to the strong one.

The paper is organized as follows:
\begin{itemize}	
\item In Section 2 we expose the connection between the above interpolation inequalities and the study of pattern formation by considering three examples: coarsening, micromagnetics, and superconductors.
\item In Section 3 we prove Proposition \ref{P1}, using a method introduced by Ledoux in \cite{Led}.
\item In Section 4 we give the proof of Proposition \ref{P2}, whose main ingredient is the geometric construction given in Lemma \ref{geom}, introduced in \cite{CCKO}.
\item In Section 5 we prove Propositions \ref{strong-W} and \ref{cor}.
\end{itemize}


\section{Three models: coarsening, micromagnetics, and superconductors}

In this section we expose the connection between our interpolation inequalities and three different physical phenomena.

The first model that we consider describes thermodynamically driven demixing, the second one describes the magnetization in a ferromagnet. In the first model, we are interested in the phenomenum of coarsening of the spatial phase distribution, in the second model, we are interested in the phenomenum of domain branching in a strongly uniaxial ferromagnet towards the sample surface.

In both of them a crucial ingredient in the proof of a lower bound for the energy is a weak version of our first two interpolation inequalities (Propositions \ref{P1} and \ref{P2}).

The last model that we consider describes type-I superconductors. Also here, the phenomenum of interest is branching: it is the magnetic flux carrying normal phase that branches when approaching the sample surface. In this phenomenum, several different regimes occurs and Propositions \ref{strong-W} and \ref{cor} are related to the analysis of two of them.

The discussion in this section is not rigorous; the main aim is to show how the quantities involved in our interpolation estimates arise naturally from the energy functionals associated to these three models.

\textit{1. Coarsening}.
The Cahn-Hilliard equation models the thermodynamically driven demixing of a two-component system. The relative concentration is described by an order parameter, say on a large torus, $u:[0,\Lambda]^d\rightarrow \re$. The driving free energy is of Ginzburg-Landau form
\begin{equation}\label{energy}
\mathcal E(u) =\frac{1}{2}\int_{[0,\Lambda]^d}(|\nabla u|^2+  (1-u^2)^2)dx.
\end{equation}
The postulate that the free energy decreases while the order  parameter is conserved in time (i.e. $\frac{d}{dt}\int_{[0,\Lambda]^d}u=0$) is satisfied by the evolution equation
\begin{equation}\label{CH}
\partial_t u-\Delta \frac{\partial \mathcal E}{\partial u}=\partial_t u-\Delta(\Delta u+u-u^3)=0,\end{equation}
which is the Cahn-Hilliard equation.

Starting from a small perturbation of the critical mixture $u \equiv 0$, numerical simulations show after an initial stage the emergency of two convoluted regions in which $u$ is very close to either its equilibrium value $1$ and $-1$, respectively. These regions are separated by a shortly modulated characteristic transition layer given by the one-dimensional minimizer  of \eqref{energy}. In the sequel, the evolution is essentially geometrical (named after Mullin and Sekerka) and is driven by a reduction of the interfacial energy. This leads to a coarsening of the two complementary domains, i.e. to an increase of their average length (as embodied by its average width or radius of curvature). Eventually, this coarsening stops because the average length-scale reaches the artificial period $\Lambda$. The coarsening is observed to be ``statistically'' self-similar in the sense that e.g. the two-point correlation function is (approximately) self-similar. It is characterized by the exponent $1/3$ in the sense that average length scale as $t^{1/3}$.

In \cite{KO}, Kohn and one of the authors proved quantitatively that, independent of the initial data provided that they are well-mixed, coarsening cannot proceed at a faster rate. The strategy for the proof has since been applied to many models that feature coarsening (see e.g. the discussion in \cite{BOS}) and relies on the gradient flow structure. Morally speaking, it consists in converting information on the steepness of the energy landscape (how quickly does the energy decrease as a function of the intrinsic distance to the well-mixed state) into information on the speed of relaxation (how fast does the energy decrease as a function of time), see the discussion in \cite{ORS}: if the landscape is not too steep, the relaxation is not too fast.

As can be easily inferred from \eqref{CH} the Cahn-Hilliard evolution is the gradient flow of the Ginzburg-Landau energy $\mathcal E$ with respect to the Euclidean geometry given by the inner product in $\dot H^{-1}$. Hence showing that the energy landscape is not too steep means establishing an estimate of the form
\begin{equation}\label{coar-ineq}
\mathcal E(u)\| |\nabla|^{-1}u\|_2 \geq \frac{1}{C}\Lambda^{d\left(1+\frac{1}{2}\right)},
\end{equation}
the power  of the system volume $\Lambda^d$ being dictated by the need to take volume averages.

We now heuristically point out the connection to the interpolation estimate from Proposition \ref{P1}. As mentioned above, the evolution relevant for coarsening is well approximated by the free-boundary problem named after Mullin and Sekerka, which is the gradient flow of
$$
\mathcal E_{MS}(u)=\begin{cases} C_0 \int |\nabla u| &\mbox{if}\;\;u\in \{-1,1\}\\
+\infty & else\end{cases}$$
with respect to $\dot H^{-1}$. Hence replacing $\mathcal E$ by $\mathcal E_{MS}$ in \eqref{coar-ineq} leads to
$$\|\nabla u\|_1 \| |\nabla|^{-1}u\|_2 \geq \frac{1}{C}\Lambda^{\frac{3}{2}} \stackrel{u\in\{-1,1\}}=\frac{1}{C}\|u\|_{\frac{4}{3}},$$
which is the squared version of our interpolation estimate.

Let us now turn to Proposition \ref{P2}. If the mixture is strongly off-critical, by which one understands that
$$\Lambda^{-d}\int _{[0,\Lambda]^d} u = -1+\phi \quad \mbox{with}\;\;\phi \ll 1,$$
the above strategy gives the correct scaling of the prefactor in the upper coarsening bound in the volume fraction $\phi$ in dimensions $d>2$. In low volume fraction, numerical simulations show that the region covered by the minority phase $u=1$ breaks up into many connected components which quickly relax to have approximately round shape. Coarsening then proceed by Ostwald ripening: the large balls grow at the expense of the small ones that eventually collapse.

Asymptotic analysis for $d=2$ shows that there is a logarithmic correction to the prefactor in the coarsening rate \cite{NO}. It is the interpolation estimate in Proposition \ref{P2} that correctly captures this logarithmic correction, as shown in \cite{CNO}, where also the weak form of \eqref{inter-est} was established.

\textit{2. Branching in Ferromagnets}.
We are interested  in a ferromagnet in form of a slab of thickness $2t$; for simplicity, we assume periodic boundary conditions in the horizontal directions so that the fundamental domain is given by $[0,\Lambda]^2\times (-t,t)$. The magnetization $m$ is a unit-length vector field $m:[0,\Lambda]^2\times (-t,t)\rightarrow \mathbb S^2$ that locally indicates the mean direction of the spin.

The ground state minimizes the Landau-Lifshitz energy functional (as modified by Brown):
\begin{equation}\label{E-m}
E(m)=d^2 \int_{[0,\Lambda]^2\times(-t,t)} |\nabla m|^2 dx + Q\int_{[0,\Lambda]^2\times(-t,t)} (m_1^2+m_2^2)dx +\int_{[0,\Lambda]^2\times \re} ||\nabla|^{-1} \nabla \cdot m|^2 dx.\end{equation}
The first term, the exchange energy, is of quantuum mechanical origin, and models a short range attraction between the spins and comes with its intrinsic length scale $d$. The second term, the anisotropy energy, comes from the interaction of the magnetization with the lattice structure of the metal; here, it is uniaxial and favors the third axes and thus the values $m=\pm(0,0,1)$. The last term, the stray field energy, can also be written directly in terms of the stray field $h$, which is determined by the static Maxwell equations:
\begin{equation}\label{stray-field}
\int_{[0,\Lambda]^2\times \re} |h|^2 dx,\quad \mbox{where}\;\;\nabla \cdot (h+m)=0,\;\;\nabla \times h=0.
\end{equation}
Both equations are to be interpreted distributionally in $\re^3$, where $m$ is extended trivially outside the slab. In other words, $h$ is the Helmoltz projection of the extended $m$. There is a mathematical analogy from magnetostatics convenient for the intuition: there are two sources for the stray field, namely "volume charges " coming from $-\nabla \cdot m$ in the bulk $\re^2 \times (-t,t)$ of the sample and "surface charges" $\pm m_3$ at the surface $\re^2\times \{-t,t\}$ of the sample. Like in electrostatics, both give rise to the potential field $h$.

Following Hubert (see Chapter 3 in \cite{HS}), let us euristically explain why branching occurs; see also \cite{DKMO,OV}. A constant magnetization in direction of the easy axes, say $m=(0,0,1)$, would have zero exchange and anisotropy energy. However, by \eqref{stray-field}, we would have $h=m$ so that the stray field energy per area of cross section would be $2t$. However, the $\dot{H}^{-1}$-norm of distributional divergence $\nabla \cdot m$, which here comes from the $m_3$-component at the sample surfaces $[0,\Lambda]^2\times \{-t,t\}$ (that is the surface charges), can be reduced by horizontally alternating between $m=(0,0,1)$ and $m=(0,0,-1)$, and thus alternating the sign of the charges. In fact, for magnetization that only depends on $x'=(x_1,x_2)$ and alternates with a period $\omega \ll t$, the stray field energy would behave as the squared $\dot H^{-\frac{1}{2}}$-norm in the horizontal variables $x'=(x_1,x_2)$ of the surface charges $m_3$, that is like $\int_{[0,\Lambda]^2}||\nabla'|^{-\frac{1}{2}} m_3|^2 dx'$.
The exchange energy prevents such jumps of $m$, but they can be replaced by smooth $x_1$-dependent transition layers (Bloch walls) between the magnetizations $(0,0,-1)$ and $(0,0,1)$. These transition layers are determined by a balance between exchange and anisotropy energy in form of
\begin{eqnarray*}
&&d^2\int \left|\frac{dm}{dx_1}\right|^2 dx_1 + Q\int (m_1^2+m_2^2)dx_1\\
&&\hspace{1em} =d^2\int \frac{1}{1-m_3^2}\left(\frac{d m_3}{d x_1}\right)^2 dx_1 + Q\int (1-m_3^2) dx_1\\
&&\hspace{1em}\geq \frac{1}{2} d Q^{\frac{1}{2}} \int \frac{d m_3}{d x_1} dx_1 = d Q^{\frac{1}{2}}.
\end{eqnarray*}
Dimensional analysis shows that the width $\omega_{wall}$ of the transition layers scales as $\omega_{wall}\sim dQ^{-\frac{1}{2}}$ and the above inequality shows that the wall energy per area in vertical direction is given by $dQ^{\frac{1}{2}}$.
This suggests to consider the reduced energy functional
\begin{equation}\label{reduced-E}
E(m)= d Q^{\frac{1}{2}} \int_{[0,\Lambda]^2\times (-t,t)} |\nabla m_3|dx + \int _{[0,\Lambda]^2\times \re}||\nabla|^{-1}\partial_3 m_3|^2 dx
\end{equation}
subject to $m_3=\pm 1$. From this reduced functional it is clear that there is an optimal period $\omega_{domain}$ for up-down domains alternating in the $x_1$-direction and separated by Bloch walls. The wall energy of such a configuration per area in the cross section $x_1$-$x_2$ behaves as $(specific\; wall \; energy)\times(wall\; area\;per\; cross\; section)=d Q^{\frac{1}{2}} \times t \omega_{domain}^{-1}$. From the above remark on the $\dot{H}^{-\frac{1}{2}}$-norm we have by a dimensional argument that for $\omega_{domain} \ll t$, the stray-field energy scales as $\omega_{domain}$. Hence the optimal period scales as $\omega_{domain}\sim (d Q^{\frac{1}{2}} t)^{\frac{1}{2}}$, which is consistent with $\omega_{wall} \ll \omega_{domain} \ll t$ provided $dQ^{\frac{1}{2}}\ll t$. The energy per cross-sectional area scales as $(dQ^{\frac{1}{2}} t)^{\frac{1}{2}}$, beating the uniform magnetization in the above regime. However this is not the scaling of the minimal energy! Intuitively, it is advantageous to have the period $\omega_{domain}$ depending on $x_3$: for $x_3\approx \pm t$, the domain width $\omega_{domain}(x_3)$ should be vanishing in order to have mean cancellation in the "surface charges" given by ${m_3}_{\big|x_3=\pm t }$; for $x_3$ away from the surfaces, the domain width $\omega_{domain}(x_3)$ should be large in order to avoid wall energy. Such a height-dependent domains width can be realized by domain branching. However, there is no free brunch: the branching of domains means that the interfacial layers between $(0,0,1)$ and $(0,0,-1)$ are no longer vertical but tilted, hence they carry a (volume) charge $\nabla \cdot m$ and thus generate a stray field. If this tilt is small, at least in the bulk, this suggests to consider the following anisotropic reduction of \eqref{reduced-E}:
\begin{equation}\label{E-aniso}
E(m)=dQ^{\frac{1}{2}} \int_{[0,\Lambda]^2\times (-t,t)} |\nabla' m_3| dx + \int_{[0,\Lambda]^2\times \re} ||\nabla'|^{-1}\partial_3 m_3|^2 dx,
\end{equation}
subject to $m_3=\pm 1$.
Note that with our convention $m_3=0$ for $|x_3|\geq t$, the finiteness of the anisotropic version \eqref{E-aniso} of the stray field energy implies that $m_3 \stackrel{\omega*}{\rightharpoonup} 0$ for $x_3 \rightarrow \pm t$, that is, infinite branching. Now thanks to its anisotropic character, the reduction \eqref{E-aniso} has a scale invariance that allows for the following non-dimensionalization
\begin{eqnarray}\label{nondim}
x'=(d Q^{\frac{1}{2}})^{\frac{1}{3}} t^{\frac{2}{3}} \hat{x'}\;\;(\mbox{and thus}\;\; \Lambda=(d Q^{\frac{1}{2}})^{\frac{1}{3}} t^{\frac{2}{3}} \hat\Lambda),\nonumber \\
 x_3= t\hat{x_3}, \;\;\mbox{and}\;\; \frac{1}{\Lambda^2}E=(d Q^{\frac{1}{2}})^{\frac{2}{3}} t^{\frac{1}{3}}\frac{1}{\hat\Lambda^2}\hat E,
\end{eqnarray} that removes all parameters. This suggests that the domain width in the bulk scales on average as $\omega_{domain} \sim (dQ^{\frac{1}{2}})^{\frac{1}{3}} t^{\frac{2}{3}}$, and the energy per cross-sectional area as $(dQ^{\frac{1}{2}})^{\frac{2}{3}}t^{\frac{1}{3}}$, which indeed beats the unbranched case.

The passage from \eqref{E-m} to \eqref{E-aniso} has been made rigorous in \cite{OV} on the level of a $\Gamma$-convergence result for infinite cross-sectional area. A key ingredient for this is a lower bound on the minimal energy per cross-sectional area, i.e. $\frac{1}{\Lambda^2}\mbox{min}E$, that is independent of the artificial periodicity $\Lambda \gg \omega_{domain}$. It is here that the interpolation estimate from Proposition \ref{P1} comes in. Let us show how, w.l.o.g. on the level of the non-dimensionalization \eqref{nondim}. By Poincar\'e's inequality in $\hat x_3$ (recall that $m_3$ is supported in $(-1,1)$), Young's inequality and the interpolation inequality in $\hat{x'}$ (and thus for $d=2$), we have as desired:
\begin{eqnarray*}
\hat E(m)&=& \int_{[0,\hat\Lambda]^2\times (-1,1)}|\hat{\nabla}'m_3| d\hat x+ \int_{[0,\hat\Lambda]^2\times \re} | |\hat \nabla '|^{-1} \hat \partial_3 m_3|^2 d\hat x\\
&\gtrsim& \int_{-1}^1\left(\int_{[0,\hat\Lambda]^2} |\hat{\nabla}'m_3| d\hat x' +\int_{[0,\hat\Lambda]^2}||\hat \nabla|^{-1}m_3|^2 d\hat x' \right) d\hat x_3\\
&\gtrsim& \int_{-1}^1\left(\int_{[0,\hat\Lambda]^2} |\hat{\nabla}'m_3| d\hat x'\right)^{\frac{2}{3}} \left(\int_{[0,\hat\Lambda]^2}||\hat \nabla|^{-1}m_3|^2 d\hat x'\right)^{\frac{1}{3}}d\hat x_3\\
&\gtrsim& \int_{-1}^1 \int_{[0,\hat\Lambda]^2} |m_3|^{\frac{4}{3}}d\hat x' d\hat x_3 \\
&\gtrsim& \hat \Lambda^2.
\end{eqnarray*}

\textit{3. Branching in Superconductors}.

The so-called intermediate state of a type-I superconductor is characterized by penetration of the magnetic field in some parts of the material which leads to the formation of normal and superconductive domains. The superconductive regions are characterized by the expulsion of the magnetic field, this is the so called \emph{Meissner effect}. In \cite{CCKO,CKO} the mathematically rigorous study of this pattern formation problem, via energy minimization, was initiated. In particular, the authors established rigourous upper and lower bounds for a reduced energy associated to this problem, in different regimes depending on the value  of the external magnetic field (small, intermediate, or close to critical). Similarly to the case of micromagnetics, interpolation inequalities can be seen as playing a crucial role to prove an ansatz-free lower bound.
In the sequel we briefly describe the physical model.

Following \cite{COS}, for $\Lambda,t>0$, we consider a wave function $\psi: [0,\Lambda]^2\times(-t,t)\rightarrow \mathbb C$, which plays the role of the order parameter, and a vector potential $A:[0,\Lambda]^2\times(-t,t)\rightarrow \mathbb R^3$. We introduce the following quantities:
$$\mbox{the density of superconducting electrons}\quad |\psi|^2,$$
$$\mbox{the magnetic field}\quad B:=\nabla \times A,$$
$$\mbox{the covariant derivative}\quad \nabla_A \psi:=\nabla \psi - i A\psi,$$
$$\mbox{the kinetic energy} \quad |\nabla_A\psi|^2,$$
$$\mbox{and the superconductive current}\quad j:= \mathcal{I} (\overline \psi \nabla_A \psi).$$
We observe that, if $\psi$ is written in polar coordinates $\psi=\rho e^{i\theta}$, then
$$|\nabla_A \psi|^2=|\nabla \rho|^2 +\rho^2 |\nabla \theta-A|^2\quad \mbox{and}\quad j=\rho^2(\nabla \theta- A).$$
Sometimes we will use the notation $B'$ to denote the first two components of the magnetic field: $B=(B_1,B_2,B_3)=(B',B_3)$.

There are three parameters that govern the behavior of the material: the external magnetic field $B_{ext}$, the \emph{coherence length} $\xi$, which measures the typical length on which $\psi$ varies, and the \emph{penetration length} $\lambda$, which describes the typical length on which the magnetic field penetrates the superconducting region. The Ginzburg-Landau parameter is given by $\kappa=\frac{\lambda}{\xi}$.

For  any pair $(\psi,A)$ such that physically observable quantities $\rho,\; B,\;j$ are $[0,\Lambda]^2$-periodic, we define the Ginzburg-Landau functional:
\begin{eqnarray}\label{GL} 
E_0(\psi,A)&:=&\int_{[0,\Lambda]^2\times(-t,t)}\left( |\nabla_A \psi|^2 +\frac{\kappa^2}{2}(1-\rho^2)^2 \right)dx +\int_{[0,\Lambda]^2\times \re}|B-B_{ext}|^2 dx,\\
&=& \int_{[0,\Lambda]^2\times(-t,t)}\left( |\nabla \rho|^2 +\rho^2|\nabla \theta -A|^2+\frac{\kappa^2}{2}(1-\rho^2)^2\right) dx \\
&&\hspace{1em} +\int_{[0,\Lambda]^2\times \re}|B-B_{ext}|^2 dx,\nonumber
\end{eqnarray}
where $B_{ext}=(0,0,\Phi)$ is the external magnetic field and the penetration length $\lambda$ is normalized to be $1$ (that is, in this unit, $\kappa$ represents the inverse of the coherence length).
The Meissner effect, namely the fact that the kinetic energy disfavors the magnetic field where the material is superconducting (i.e. $\rho>0$) can easily be seen from this formula: In simply connected regions where $\rho^2$ is positive, the vector potential $A$ wants to be close to gradient (the gradient of the phase $\theta$) so that the magnetic field $B$ wants to be small.

The type-I superconductors correspond to the regime of $\kappa$ small ($\kappa < \sqrt 2$). In this regime, there is a positive surface tension that leads to the formation of normal and superconductive regions corresponding to $\rho \sim 0$ and $\rho \sim 1$, respectively, separated by interfaces.
Indeed, using an identity on $|\nabla_A \psi|^2$, the energy $E_0$ in \eqref{GL} can be bounded below by (see \cite{COS}, Lemma 2.3)
\begin{eqnarray}\label{E_0}
&&E_0(\psi, A)\geq \int_{[0,\Lambda]^2\times(-t,t)}\left[\left(1-\frac{\kappa}{\sqrt 2}\right)|\nabla \rho|^2 +
\left(B_3 - \frac{\kappa}{\sqrt 2} (1-\rho^2) \right)^2\right. \nonumber \\
&& \hspace{5em}\left.+  |B'|^2  -\Phi^2 +\frac{\kappa}{\sqrt 2} \Phi\right]dx + \int_{[0,\Lambda]^2\times\left(\re\setminus (-t,t)\right)}|B-B_{ext}|^2dx.
\end{eqnarray}
We observe that, under the sharp Meissner condition $\rho^2 B=0$, the sum of the first two terms on the right-hand side  can be written as
\begin{eqnarray*}
&&\int_{[0,\Lambda]^2\times(-t,t)}\left[\left(1-\frac{\kappa}{\sqrt 2}\right)|\nabla \rho|^2 +
\left(B_3 - \frac{\kappa}{\sqrt 2} (1-\rho^2) \right)^2 \right]dx\nonumber \\
&&\hspace{2em} = \int_{[0,\Lambda]^2\times(-t,t)}\left[\left(1-\frac{\kappa}{\sqrt 2}\right)|\nabla \rho|^2  + \frac{\kappa^2}{2} \chi_{\{\rho>0\}}(1-\rho^2)^2 +\chi_{\{\rho=0\}}\left(B_3 - \frac{\kappa}{\sqrt 2} \right)^2\right]dx.
\end{eqnarray*}
This is a Modica-Mortola type functional with a degenerate double-well potential given by
$$W(\rho)=\chi_{\{\rho>0\}}(1-\rho^2)^2.$$
%


After introducing the new order parameter $\chi=1-\rho^2$, and rescaling according to $x=\frac{\sqrt 2}{\kappa}\hat x$, $B=\frac{\kappa}{\sqrt 2} \hat B$, $\Phi=\frac{\kappa}{\sqrt 2} \hat \Phi$, a Modica-Mortola type argument leads to the reduced functional in the regime $\kappa\ll 1$:
\begin{equation}\label{ShIn}
\int_{[0,\Lambda]^2\times(-t,t)}\left({\frac{4}{3}|\nabla\chi|}+ |B'|^2+(B_3-{\chi})^2\right)dx
+\int_{[0,\Lambda]^2\times( \re\setminus (-t,t))}|B-(0,0,\Phi)|^2dx
\end{equation}
subject to the constraints: $\chi\in \{0,1\}$ ($\chi=0$ corresponds to the superconducting phase, and $\chi=1$ to the normal phase), $$\nabla \cdot B=0 \quad \mbox{everywhere},$$ and $$(1-\chi)B=0\quad \mbox{in}\;\;[0,\Lambda]^2\times (-t,t),$$ where $\nabla \cdot B=0$ comes from the Maxwell equations in form of $B=\nabla \times A$, while the last constraint $  (1-\chi)B=0$ represents the Meissner effect.
%

We now perform the following anisotropic rescaling: $x'=t^{\frac{2}{3}} \hat{x}'$, $x_3=t \hat{x_3}$, $B'=t^{-\frac{1}{3}} \hat{B}'$ inside the sample $[0,\Lambda]^2\times(-t,t)$, and $x=t^{\frac{2}{3}} \hat{x}$ (and thus $ \Lambda=t^{\frac{2}{3}}\hat\Lambda$), $B=t^{-\frac{1}{3}} \hat{B}$ (and thus $\Phi=t^{-\frac{1}{3}}\hat \Phi$) outside the sample.
If we define $\nu=t^{\frac{1}{3}}$, we get the energy functional in the regime $t\gg 1$ (for simplicity of notations we drop the $\hat \cdot$ on functions and variables):
\begin{equation}\label{E(chi)}
E(\chi,B)=\int_{[0,\Lambda]^2\times (-1,1)}\left(\frac{4}{3}|\nabla'\chi|+\chi |B'|^2\right)dx+\frac{1}{\nu}\int_{[0,\Lambda]^2\times (\re \setminus(-1,1))}|B-(0,0,\Phi)|^2dx,\end{equation}
subject to the constraints
\begin{equation}\label{Con-Eq}\left\{\begin{array}{lll}
\chi\in\{0,1\}, \\
\partial_3\chi+\nabla'\cdot(\chi B')=0\;\mbox{\small in sample},\\
\chi=B_3\;\mbox{\small at surface},\\
 \nabla\cdot B=0
\;\mbox{\small outside sample}.
\end{array}\right.
\end{equation}
Note that in this regime of $t\gg 1$, the penalization of $\partial_3 \chi$ fades away, while the penalization of $B_3-\chi$ turns into the hard constraint $B_3=\chi$ in the sample, so that together with the Meissner effect $B(1-\chi)=0$, $\nabla \cdot B=0$ turns into the transport equation 
$$\partial_3 \chi + \nabla'\cdot (\chi B')=0.$$

As for the case of micromagnetics, also in this case a branched pattern is observed: $\chi$ alternates between the two phases $\chi=0$ and $\chi=1$ on a length-scale which decreases while approaching the boundaries $\{x_3=\pm1\}$.

\begin{figure}[h]
	\centering

	\resizebox{10cm}{!}{
	
		\begin{tikzpicture}[scale=1]
		
			\node (myfirstpic) at (0,0) {\includegraphics[scale=0.6]{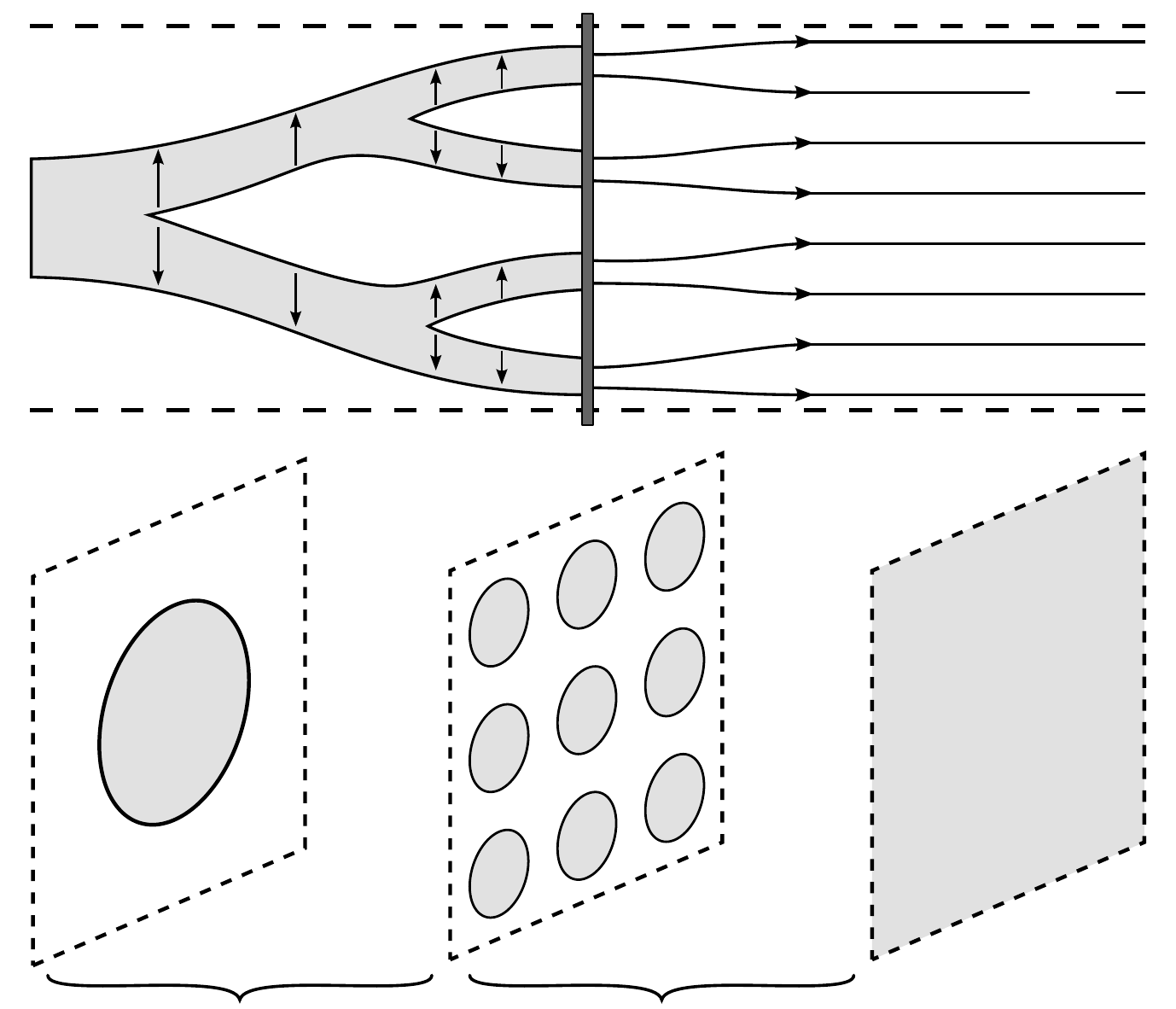}};
			
			\node at (3.48,2.95)  {\centering $B$};										
			\node at (-3.48,2.95) {\centering $B^\prime$};
			\node at (-2.5,-3.88) {\centering $W$};
			\node at (0.52,-3.88) {\centering $\dot{H}^{-\frac{1}{2}}$};			
			\node at (2.3,-0.68) {\centering $\Phi$};
			\node at (-3.7,-0.68) {\centering $\chi$};
			
		\end{tikzpicture}
		
	}
	\caption{}
\end{figure}


 Let us have now a closer look at the quantities involved in the energy $E(\chi, B)$.

 We first observe that, thanks to the last two constraints in \eqref{Con-Eq}, the last term in the energy is estimated by
 \begin{equation}\label{H}
 \int_{[0,\Lambda]^2\times (\re \setminus(-1,1))}|B-(0,0,\Phi)|^2 dx \gtrsim \| |\nabla|^{-\frac{1}{2}}(\chi_{|\{x_3=1\}}-\Phi)\|_2^2,\end{equation}
 since (cf. \eqref{H-norm})
 $$\||\nabla|^{-\frac{1}{2}}f\|_2^2=2\pi\inf \left\{\int_{[0,\Lambda]^2\times (0,\infty)} |B|^2dx \;\;:\;\;\nabla \cdot B=0,\;\;B_3(\cdot,0)=f \right\}.$$

Here, we denote by $\chi_{|_{\{x_3=1\}}}$ the \emph{weak} limit of $\chi$ as $x_3 \uparrow 1$; in particular, $\chi_{|_{\{x_3=1\}}}$ may not be a characteristic function.

 We consider now the second term $\int_{[0,\Lambda]^2\times(-1,1)}\chi |B'|^2dx$:  Using again that $\chi$ satisfies the continuity equation, cf. \eqref{Con-Eq},
$$\partial_3 \chi +\nabla'\cdot (\chi B')=0,$$
the classical Benamou-Brenier result in optimal transport theory \cite{BB} implies that for any slice $\{x_3=z\}$ for $z\in (-1,1)$
\begin{equation}\label{BB-dis}\int_{[0,\Lambda]^2\times(-1,1)}  \chi |B'|^2 dx \gtrsim W_2^2(\chi_{|_{\{x_3=z\}}}, \chi_{|_{\{x_3=1\}}}) .\end{equation}
%
%
Combining together \eqref{H} and \eqref{BB-dis}, we deduce that there exists a slice $z \in (-1,1)$ such that
\begin{equation}\label{regime3}
E(\chi,B)\gtrsim \int_{[0,\Lambda]^2}|\nabla' \chi_{|_{\{x_3=z\}}}| dx' + W_2^2(\chi_{|_{\{x_3=z\}}},\chi_{|_{\{x_3=1\}}}) + \frac{1}{\nu} \||\nabla|^{-\frac{1}{2}}(\chi_{|_{\{x_3=1\}}}-\Phi)\|_2^2.
\end{equation}
The quantities on the right-hand side are the ones involved in our Proposition \ref{cor}.

In \cite{CKO,CCKO} rigourous upper and lower bounds for the energy \eqref{E(chi)} are established. The study of minimizing configurations reveals different regimes, depending on the parameter $\nu \ll 1$ and on the value $\Phi$ of the external field.

Here we list three different regimes with the corresponding behavior of the energy:
\begin{enumerate}
\item For $\nu\ll 1$ and $1-\Phi \ll 1$ the minimal energy per area behaves like:
$$\Lambda^{-2} \min E \sim (1-\Phi)\ln^{\frac{1}{3}}(1-\Phi).$$
In this regime, the minority phase is not connected, as shown in Figure 2 below.
The interpolation estimate used for this regime is again the one with the logarithmic gain, established in Proposition \ref{P2}.
\begin{figure}[h]
	\centering	
	
	\resizebox{10cm}{!}{
	
		\begin{tikzpicture}[scale=1]
		
			\node (myfirstpic) at (0,0) {\includegraphics[scale=0.6]{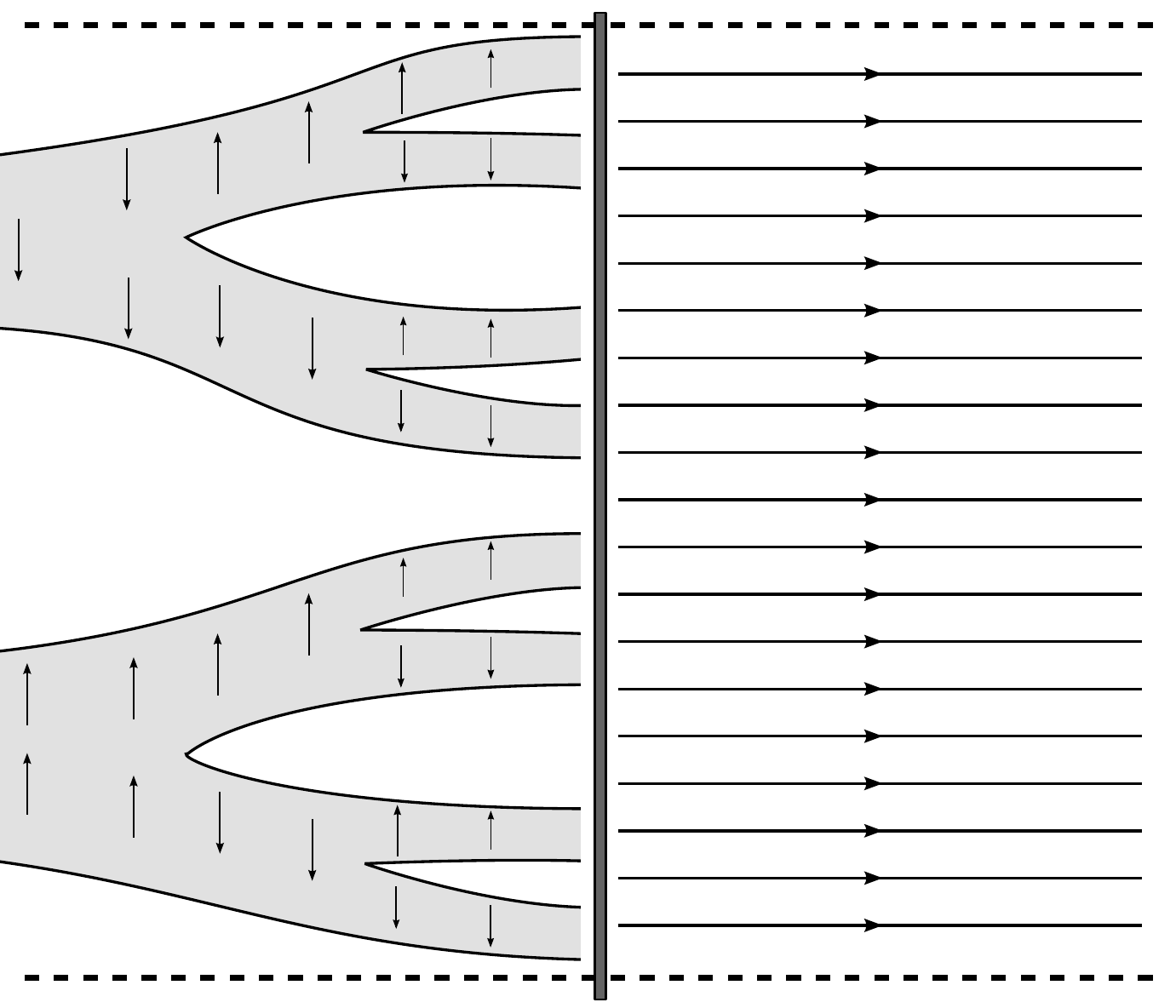}};
			
			\node at (-3.85,-.05) {$B^\prime$};			
			\node at (-3.48,1.8) {$\chi = 1$};	
			
		\end{tikzpicture}
		
	}
	\caption{}
\end{figure}

\item For $\nu\ll 1$ and $\nu^\frac{6}{7}\ll\Phi\ll 1$ the minimal energy per area behaves like:
$$\Lambda^{-2}\min E\;\sim\;{\Phi^\frac{2}{3}}.$$
In this regime, the minority phase is  connected and we have uniform branching, as shown in Figure 3 below. In this case, if we further simplify the model letting $\nu\rightarrow 0$, the last term $\||\nabla|^{-\frac{1}{2}}(\chi_{|_{\{x_3=1\}}}-\Phi)\|_2^2$ turns into the constraint $\chi_{|_{\{x_3=1\}}}\equiv\Phi$ and in order to bound from below the energy $E(\chi,B)$ it is enough to bound from below the quantity:
$$\int_{[0,\Lambda]^2}|\nabla' \chi| dx' + W_2^2(\chi,\Phi).$$
Let us show how, for this regime, our Proposition \ref{strong-W} leads to the right scaling for the minimal energy.  By \eqref{BB-dis} and Young's inequality we have for some slice $z\in (-1,1)$:
\begin{eqnarray}\label{en}
E(\chi,B)&=&\int_{[0,\Lambda]^2\times(-1,1)}(|\nabla' \chi|+\chi |B'|^2) dx \gtrsim \int_{[0,\Lambda]^2}|\nabla' \chi_{|_{\{x_3=z\}}}|dx' + W_2^2(\chi_{|_{\{x_3=z\}}},\Phi)\nonumber \\
&\gtrsim&  \left(\int_{[0,\Lambda]^2}|\nabla'\chi_{|_{\{x_3=z\}}}|dx'\right)^{\frac{2}{3}} \left(W_2^2(\chi_{|_{\{x_3=z\}}},\Phi) \right)^{\frac{1}{3}}.
\end{eqnarray}

We apply now our inequality \eqref{strong-W-eq} for $d=2$ to the function $u=\frac{\chi_{|_{\{x_3=z\}}}}{\Phi}$ in order to get
\begin{eqnarray}\label{int}
&&\left(\Phi^{-1}{\int|\nabla'\chi_{|_{\{x_3=z\}}}|}dx'\right)^\frac{2}{3}
\left(\Phi^{-1}{W_2^2(\chi_{|_{\{x_3=z\}}},\Phi)}\right)^\frac{1}{3}
\sim\hspace{1em}\|\nabla' u\|_1^\frac{2}{3}\;W_2(u,1)^\frac{2}{3}\nonumber \\
&&\hspace{1em} \gtrsim \|\max\{u\hspace{-0.3ex}-\hspace{-0.3ex}2,0\}\|_\frac{4}{3}^\frac{4}{3}
\stackrel{\chi\in\{0,1\},\Phi\ll 1}{\sim}\Lambda^2\Phi^{-\frac{1}{3}}.
\end{eqnarray}
Combining \eqref{en} and \eqref{int}, we obtain
$$E(\chi,B) \gtrsim \Lambda^2 \Phi^{\frac{2}{3}}.$$
%
%
%
%
%
%
%

\begin{figure}[h]
	\centering	
	
	\resizebox{10cm}{!}{
	
		\begin{tikzpicture}[scale=1]
		
			\node (myfirstpic) at (0,0) {\includegraphics[scale=0.6]{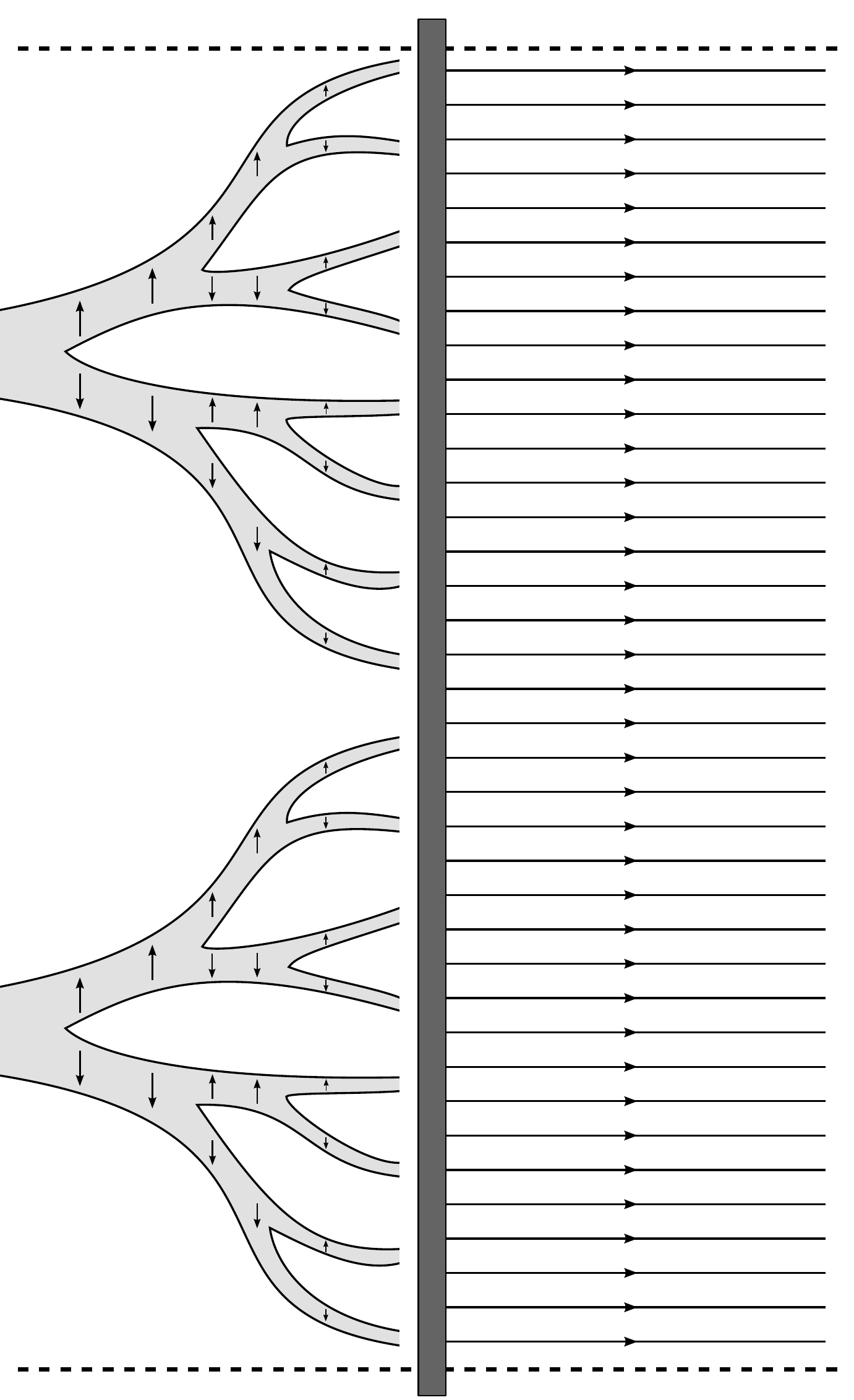}};
			
			\node at (-3.85,5.85) {$B^\prime$};		
			
		\end{tikzpicture}
		
	}
	\caption{}
\end{figure}

\item For $\nu\ll 1$ and $\Phi\ll\nu^\frac{6}{7}$ the minimal energy per area behaves like:
$$\Lambda^{-2}\min E\;\sim\;{\Phi\nu^{-\frac{2}{7}}}.$$
This regime differs from the previous regime by the non-uniformity of $\chi_{|_{\{x_3=1\}}} $ (see Figure 4 below). In this last regime, after rescaling by a suitable power of $\nu$, the lower bound for the energy can be deduced from Proposition \ref{strong-W-H} below (applied in the case of dimension $d=2$). More precisely, inequality \eqref{ineq-H} for $d=2$ reads (after taking power $\frac{3d+3}{3d+1}=\frac{9}{7}$):
\begin{equation}\label{ineq-H2}
\int_{[0,\Lambda]^2} \left( u-\nu^{\frac{7}{9}}\right)_+^{\frac{9}{7}} dx\leq  C\left(\|\nabla u\|_1+\nu^{\frac{2}{3}}W_2^2(u,v)+\nu^{-\frac{1}{3}}\| |\nabla|^{-\frac{1}{2}}(v-\Phi)\|^2_2\right).
\end{equation}
We observe that if we set $u=M \hat u$, $v=M\hat v$ (and thus $\Phi=M\hat \Phi$), $x=\ell\hat x$ (and thus $\Lambda= \ell{\hat \Lambda}$) the quantities on the right hand-side of \eqref{ineq-H2} scale as follows:
$$\begin{cases}
\|\nabla u\|_1= \ell M \|\hat \nabla \hat u \|_1,\\
W_2^2(u,v)=\ell^4 M W_2^2(\hat u, \hat v),\\
\| |\nabla|^{-\frac{1}{2}}(v-\Phi)\|^2_2=\ell^3M^2 \| |\hat\nabla|^{-\frac{1}{2}}(\hat v-\hat \Phi)\|^2_2.
\end{cases}
$$
We now choose $\ell=M=\nu^{-\frac{2}{9}}$, use the scalings above and multiply by $\nu^{\frac{4}{9}}$ in \eqref{ineq-H2} to deduce:
$$
\nu^{-\frac{2}{7}}\int_{[0,{\hat \Lambda}]^2} (\hat u -\nu)_+^{\frac{9}{7}}d \hat x \leq C\left(\|\hat \nabla \hat u \|_1 +W^2_2(\hat u, \hat v)+\nu^{-1}\| |\hat\nabla|^{-\frac{1}{2}}(\hat v-\hat \Phi)\|^2_2\right).
$$
Using \eqref{regime3} and applying the above inequality with $\hat u= \chi_{|_{\{x_3=z\}}}$ and $\hat v= \chi_{|_{\{x_3=1\}}}$,  we get the desired lower bound:
$$\Lambda^{-2}\min E\;\gtrsim\;{\Phi\nu^{-\frac{2}{7}}}.$$

\begin{figure}[h]
	\centering	
	
	\resizebox{10cm}{!}{
	
		\begin{tikzpicture}[scale=1]
		
			\node (myfirstpic) at (0,0) {\includegraphics[scale=0.6]{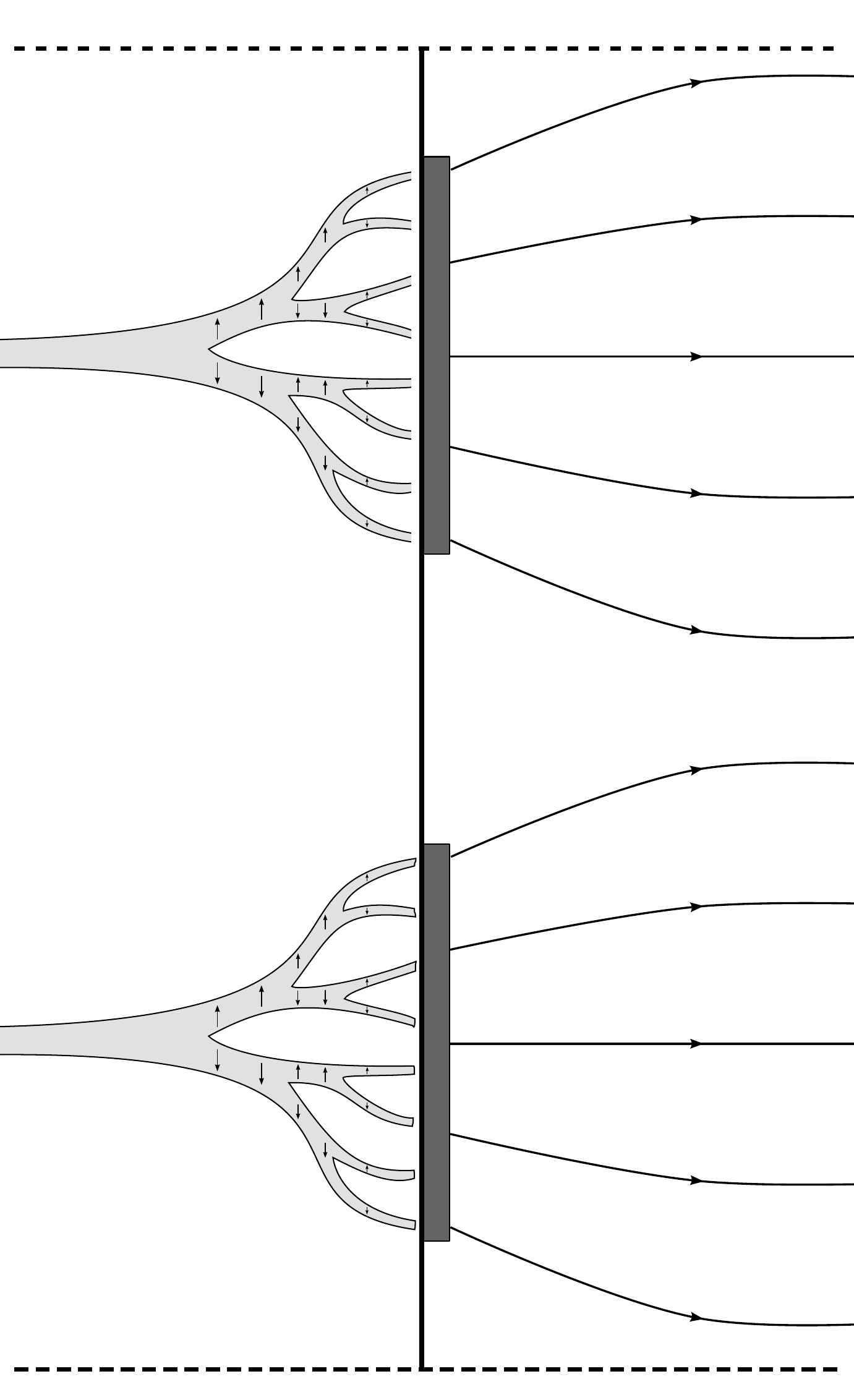}};
			
			\node at (-3.85,5.85) {$B^\prime$};		
			
		\end{tikzpicture}
		
	}
	\caption{}
\end{figure}
\end{enumerate}
\begin{prop}\label{strong-W-H}
Let $u,\:v:[0,\Lambda]^d\rightarrow \re$ be periodic functions with $u,\:v\geq 0$ and $\Lambda^{-d}\int u=\Lambda^{-d}\int v=\Phi$.

Then, there exists a constant $C>0$ only depending on $d$ such that for any $\nu>0$ with $\Phi\leq \frac{1}{C}\nu^{\frac{3d+1}{3d+3}}$ we have
\begin{equation}\label{ineq-H}
 \left|\left|\left(u-\nu^{\frac{3d+1}{3d+3}}\right)_+\right|\right|_{\frac{3d+3}{3d+1}}\leq C\left(\|\nabla u\|_1+\nu^{\frac{2}{d+1}}W_2^2(u,v)+\nu^{\frac{1-d}{d+1}}\| |\nabla|^{-\frac{1}{2}}(v-\Phi)\|^2_2\right)^{\frac{3d+1}{3d+3}}.
\end{equation}
\end{prop}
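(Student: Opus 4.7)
Let $p=\tfrac{3d+3}{3d+1}$ and $\lambda=\nu^{1/p}=\nu^{(3d+1)/(3d+3)}$, and set
\[ F:=\|\nabla u\|_1+\nu^{2/(d+1)}W_2^2(u,v)+\nu^{(1-d)/(d+1)}\||\nabla|^{-1/2}(v-\Phi)\|_2^2. \]
The desired inequality is equivalent to $\int(u-\lambda)_+^p\,dx\leq CF$. The plan is to first establish a weak-type estimate on super-level sets $\{u>t\}$ valid for every $t\geq\lambda$, and then to upgrade it to the strong $L^p$ statement via a dyadic slicing of the layer-cake integral. The smallness hypothesis $\Phi\leq \lambda/C$ will be used to absorb the bulk ``mean'' contribution into the truncation; this is the structural reason why no control of $u$ below the threshold $\lambda$ can be expected.

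For the weak bound, I fix $t\geq\lambda$ and a smoothing scale $\delta>0$, and let $\eta_\delta\geq 0$ be a mollification at scale $\delta$ of $\chi_{\{u>t\}}$ (say by the heat semigroup). Starting from the decomposition
\[ \int u\,\eta_\delta=\int(u-v)\,\eta_\delta+\int(v-\Phi)\,\eta_\delta+\Phi\int\eta_\delta, \]
I bound the left-hand side below by $t|\{u>t\}|$ up to a perimeter error $\lesssim\delta|\partial\{u>t\}|$, and estimate the three right-hand terms as follows. The transport term is controlled via Benamou--Brenier/Kantorovich duality (using the representation \eqref{Kan} and that $|\nabla\eta_\delta|$ is supported in a layer of width $\delta$ around $\partial\{u>t\}$), yielding a bound of Cauchy--Schwarz type in $W_2^2(u,v)$ and $\delta^{-2}|\{u>t-O(\delta)\}|$. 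The field term is handled by the $\dot H^{-1/2}$--$\dot H^{1/2}$ pairing
\[ \Bigl|\int(v-\Phi)\,\eta_\delta\Bigr|\leq\||\nabla|^{-1/2}(v-\Phi)\|_2\,\||\nabla|^{1/2}\eta_\delta\|_2, \]
combined with the smoothing estimate $\||\nabla|^{1/2}\eta_\delta\|_2^2\lesssim\delta^{-1}|\{u>t-O(\delta)\}|$. The mean term $\Phi\int\eta_\delta\lesssim\Phi|\{u>t-O(\delta)\}|$ is absorbed on the left by $\Phi\leq t/C$, and the perimeter error $\delta|\partial\{u>t\}|$ is controlled by $\|\nabla u\|_1$ via the co-area formula. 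Optimizing $\delta$ and applying Young's inequality to the transport and field contributions, so that their $\nu$-weights come out to exactly $\nu^{2/(d+1)}$ and $\nu^{(1-d)/(d+1)}$, gives the weak bound
\[ t^{p}|\{u>t\}|\leq CF\qquad\text{for all }t\geq\lambda. \]

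To pass from this weak bound to the strong statement, I split
\[ \int(u-\lambda)_+^p\,dx=p\int_0^\infty s^{p-1}|\{u>\lambda+s\}|\,ds \]
over dyadic annuli $t\in[2^k\lambda,2^{k+1}\lambda)$, $k\geq 0$. Since a direct insertion of the weak bound $|\{u>t\}|\leq CF/t^p$ produces a logarithmically divergent integral at infinity, I rerun the weak-bound argument locally within each annulus, with the global $\|\nabla u\|_1$ replaced by the localized perimeter integral $\int_{2^k\lambda}^{2^{k+1}\lambda}|\partial\{u>s\}|\,ds$; these pieces sum to $\|\nabla u\|_1$ by co-area. Summing the resulting dyadic estimates yields a convergent geometric series (thanks to $p>1$) and produces $\int(u-\lambda)_+^p\leq CF$.

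The analytic heart of the argument — and the main obstacle — is the joint choice of the smoothing scale $\delta$ against the three heterogeneous terms of $F$: the smoothing estimates on $\||\nabla|^{1/2}\eta_\delta\|_2^2$, the Benamou--Brenier-type transport estimate, and the co-area contribution from $\|\nabla u\|_1$ must be dovetailed so that the powers of $\nu$ and the isoperimetric exponent $(d-1)/d$ conspire to produce exactly the threshold $\lambda=\nu^{(3d+1)/(3d+3)}$ and the integrability exponent $p=\tfrac{3d+3}{3d+1}$. This balancing step, together with the absorption $\Phi\int\eta_\delta\leq\tfrac12 t|\{u>t\}|$ (which is the one place the hypothesis $\Phi\leq\lambda/C$ is invoked), is the chief technical difficulty of the proof.
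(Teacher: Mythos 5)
Your proposal diverges from the paper's proof in a way that leaves a genuine gap in the passage from the weak bound to the strong $L^p$ statement. The paper does \emph{not} go weak-to-strong. Instead, it multiplies the basic level-set inequality $\int\chi_\mu\lesssim R\int|\nabla\chi_\mu|+\mu^{-1}\int\phi_{\mu,R}\,u$ by $\mu^{(3d+3)/(3d+1)}$, integrates in $d\mu/\mu$, and thereby forms an \emph{aggregated} test function $\varphi(x)=\int_1^\infty\mu^{2/(3d+1)}\phi_{\mu,R}(x)\,d\mu/\mu$. Kantorovich duality is then applied once, to this single aggregated $\varphi$, so that $W_2^2(u,v)$ and $\||\nabla|^{-1/2}(v-\Phi)\|_2^2$ appear exactly once on the right-hand side. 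Your plan instead proves a weak bound $t^p|\{u>t\}|\leq CF$ for each $t\geq\lambda$ (which means applying duality level by level) and then sums over dyadic annuli. You correctly notice the logarithmic divergence of the perimeter contribution and propose replacing $\|\nabla u\|_1$ by the localized integral $\int_{2^k\lambda}^{2^{k+1}\lambda}\mathrm{Per}(\{u>s\})\,ds$, which does sum back to $\|\nabla u\|_1$ by coarea. But $W_2^2(u,v)$ and the $\dot H^{-1/2}$ term cannot be localized to level-set annuli in any analogous way: rerunning the argument in annulus $k$ produces the \emph{full} global $W_2^2(u,v)$ each time, and $\sum_k W_2^2(u,v)$ diverges. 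The cited fact that $p>1$ makes the layer-cake integral converge does not address this overcounting. To repair this you would essentially have to introduce $k$-dependent weights in the Young/Kantorovich steps that decay geometrically, and you would then face a trade-off that is not resolved in your sketch; the paper's aggregation-before-duality step is precisely the device that sidesteps this issue.

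A secondary but real concern is your replacement of the paper's geometric construction by a plain heat mollification $\eta_\delta$ of $\chi_{\{u>t\}}$. In Step 2 of the paper's proof, the quantity $\||\nabla|^{1/2}\widetilde\psi\|_2^2$ (the $\dot H^{1/2}$ norm of the $c$-transform) is estimated by exploiting that $\phi_{\mu,R}$ is the indicator of a union of at most $\lesssim R^{-d}\int\chi_\mu$ balls of radius $R$ (Remark \ref{geom-d}), with $\widetilde\psi_\mu$ therefore supported on slightly enlarged balls of radius $l\lesssim\mu^{1/(3d+1)}$ and obeying sharp $L^\infty$ and Lipschitz bounds. A heat mollification of $\chi_{\{u>t\}}$ does not come with this Vitali-cover structure, and the smoothing estimate $\||\nabla|^{1/2}\eta_\delta\|_2^2\lesssim\delta^{-1}|\{u>t\}|$ you invoke is not sufficient on its own to reproduce the needed bound on $\||\nabla|^{1/2}\widetilde\psi\|_2^2$; you would still need the ball-count/support-size information to control the $c$-transform. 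Finally, the transport-term estimate you describe as ``of Cauchy--Schwarz type in $W_2^2(u,v)$ and $\delta^{-2}|\{u>t-O(\delta)\}|$'' (a $W_1$-via-Lipschitz argument) would bring in the uncontrolled total mass $\int u=\Lambda^d\Phi$ rather than $W_2^2$ itself; the duality estimate \eqref{Kan} with quadratic cost, as used in the paper, is not of Cauchy--Schwarz type and avoids this.
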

We write here the interpolation estimates \eqref{ineq-H}  in additive form, since this is the one useful for the application.  We prefer  to state Proposition \ref{cor} in multiplicative form, since this is the standard form for interpolation inequalities. In Section 5 we will see that Proposition \ref{cor} follows easily from Proposition \ref{strong-W-H}.

\section{Interpolation inequality in general dimension}
In this section we  give the proof of Proposition \ref{P1}.
We start by recalling the weak version of estimate \eqref{est1}. We remind the definition of the weak $L^p_w$-norm of a function $u$:
$$\|u\|_{w-{p}}:=\sup_{\mu>0}\mu|\{|u|\geq \mu\}|^{1/p}.$$
\begin{lem}[\cite{V}]\label{lem-weak}
 There exists a constant $C<\infty$ only depending on $d$ such that for all periodic functions $u:[0,\Lambda]^d\rightarrow \re$, with $\int u=0$,  we have
\begin{equation}\nonumber
\|u\|_{w-\frac{4}{3}}\leq
C\|\nabla u\|_1^\frac{1}{2}\;\||\nabla|^{-1}u\|_2^\frac{1}{2}.
\end{equation}
\end{lem}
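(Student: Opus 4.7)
The plan is to run a heat-semigroup truncation in the spirit of Ledoux, decomposing $u=P_t u+(u-P_tu)$ at a scale $t=t(\mu)$ chosen in terms of $\mu$, and to balance two complementary bounds—one driven by $\|\nabla u\|_1$ and one by $\||\nabla|^{-1}u\|_2$.

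First I would establish two building blocks. On the one hand, writing $(u-P_t u)(x)=\int K_t(y)\bigl(u(x)-u(x-y)\bigr)\,dy$ with $K_t$ the (periodic) heat kernel, and using the elementary identity $u(x)-u(x-y)=-\int_0^1 y\cdot\nabla u(x-sy)\,ds$, integration in $x$ and the first-moment bound $\int|y|K_t(y)\,dy\lesssim\sqrt{t}$ give
\[
\|u-P_t u\|_1\;\le\; C\sqrt{t}\,\|\nabla u\|_1.
\]
On the other hand, Plancherel yields
\[
\|P_t u\|_2^2=\int e^{-2t|\xi|^2}|\widehat u(\xi)|^2\,d\xi\le \Bigl(\sup_{\xi}|\xi|^2 e^{-2t|\xi|^2}\Bigr)\int|\xi|^{-2}|\widehat u|^2\,d\xi\le \tfrac{C}{t}\,\||\nabla|^{-1}u\|_2^2,
\]
so $\|P_t u\|_2\le Ct^{-1/2}\||\nabla|^{-1}u\|_2$.

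Next, for each $\mu>0$ I would use $\{|u|>2\mu\}\subset\{|u-P_t u|>\mu\}\cup\{|P_t u|>\mu\}$ and apply Chebyshev with different exponents on the two pieces:
\[
|\{|u-P_t u|>\mu\}|\le\mu^{-1}\|u-P_t u\|_1\le C\mu^{-1}\sqrt{t}\,\|\nabla u\|_1,
\]
\[
|\{|P_t u|>\mu\}|\le\mu^{-2}\|P_t u\|_2^{\,2}\le C\mu^{-2}t^{-1}\||\nabla|^{-1}u\|_2^2.
\]
Optimizing in $t$ by balancing the two right-hand sides leads to the choice $t^{3/2}\sim\mu^{-1}\||\nabla|^{-1}u\|_2^2/\|\nabla u\|_1$, at which both bounds become comparable to $\mu^{-4/3}\|\nabla u\|_1^{2/3}\||\nabla|^{-1}u\|_2^{2/3}$. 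Hence
\[
|\{|u|>2\mu\}|\le C\,\mu^{-4/3}\|\nabla u\|_1^{2/3}\||\nabla|^{-1}u\|_2^{2/3},
\]
and taking the $3/4$-th power produces exactly the claimed weak-$L^{4/3}$ inequality.

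The conceptual obstacle—and the reason the argument yields the dimension-free exponent $4/3$—is precisely the choice of $L^2$ (rather than $L^\infty$) to control $P_t u$. Indeed, the naive variant using $\|P_t u\|_\infty\lesssim t^{-(d+2)/4}\||\nabla|^{-1}u\|_2$ (derivable by the same Plancherel/kernel trick) delivers only weak-$L^{(d+4)/(d+2)}$, which coincides with $L^{4/3}$ only in dimension $d=4$. Using instead $\|P_t u\|_2$ trades a sharper pointwise bound for a sharper integrability exponent, and this is exactly what renders the scaling universal in $d$. All remaining steps are standard once one verifies the two building blocks above, which in the periodic setting require only that $\int u=0$ (so that the low-frequency obstruction disappears and the Fourier manipulations are legitimate on the dual lattice).
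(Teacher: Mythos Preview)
Your argument is correct. Both your proof and the paper's are in the Ledoux spirit---split $u$ at a scale tied to $\mu$, control the rough piece by $\|\nabla u\|_1$ and the smooth piece by $\||\nabla|^{-1}u\|_2$, then balance---but the way the $\dot H^{-1}$-norm enters is genuinely different. You bound $\|P_t u\|_2$ directly via Plancherel and then apply $L^2$-Chebyshev to $\{|P_t u|>\mu\}$. The paper instead tests $u$ against the mollified signed characteristic function $\chi_{\mu,R}=\psi_R*\chi_\mu$, writes $\int\chi_\mu u=\int\chi_\mu(u-u_R)+\int\chi_{\mu,R}\,u$, and estimates the second term by $H^1$--$\dot H^{-1}$ duality as $\|\nabla\chi_{\mu,R}\|_2\,\||\nabla|^{-1}u\|_2$, with $\|\nabla\chi_{\mu,R}\|_2\lesssim R^{-1}(\int|\chi_\mu|)^{1/2}$; the factor $(\int|\chi_\mu|)^{1/2}$ is then absorbed by Young. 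Your route is arguably more direct for the weak estimate itself. The paper's route, however, is chosen deliberately because it sets up the machinery for the \emph{strong} inequality (Proposition~\ref{P1}): the expression $\int\chi_{\mu,R}\,u$ can be integrated in $\mu$ before applying duality, which is what turns the weak bound into a genuine $L^{4/3}$ bound. Your heat-semigroup/Chebyshev variant does not immediately lend itself to that integration-in-$\mu$ step.
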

This lemma is proven in \cite{V}. Here, for the sake of completeness, we give the proof of this weak estimate, since it is also useful to prove the strong version \eqref{est1}.
\begin{proof} [Proof of Lemma \ref{lem-weak}]
For simplicity of notations, in the following we will write ``$a\lesssim b$'' to mean that there exists a positive constant $C$ only depending on $d$ such that $a\leq Cb$, and $\int u$ to denote $\int_{[0,\Lambda]^d} u(x)dx$.
 By a scaling argument in $x$, it is enough to show
\begin{equation}\nonumber
\sup_{\mu\ge 0}\mu^{\frac{4}{3}}|\{|u|>\mu\}|\lesssim\|\nabla u\|_1+\||\nabla|^{-1}u\|_2^2.
\end{equation}
Indeed, the change of variables $x=L\hat x$ yields
\begin{equation}\nonumber
\sup_{\mu\ge 0}\mu^{\frac{4}{3}}|\{|u|>\mu\}|\lesssim L^{-1}\|\hat\nabla u\|_1+
L^2\||\hat\nabla|^{-1}u\|_2^2,
\end{equation}
where the symbol $\hat\nabla$ denotes the gradient with respect to the new variable $\hat x$.
The choice of $L=\|\hat\nabla u\|_1^\frac{1}{3}\||\hat\nabla|^{-1}u\|_2^{-\frac{2}{3}}$ yields
\begin{equation}\nonumber
\sup_{\mu\ge 0}\mu^{\frac{4}{3}}|\{|u|>\mu\}|\lesssim \|\hat\nabla u\|_1^{\frac{2}{3}}
\;\||\hat\nabla|^{-1}u\|_2^{\frac{2}{3}}.
\end{equation}
Raising to the power $3/4$ we get, as desired,
\begin{equation}\nonumber
\|u\|_{w-\frac{4}{3}}\lesssim \|\hat\nabla u\|_1^{\frac{1}{2}}
\;\||\hat\nabla|^{-1}u\|_2^{\frac{1}{2}}.
\end{equation}

For an arbitrary level $\mu\ge 0$ we introduce the signed characteristic function $\chi_\mu(x)$
of the $\mu$-level set of $u$:
\begin{equation}\label{chi}
\chi_\mu:=\left\{\begin{array}{ccccccc}
 1&\mbox{for}& \mu&<  &u&   &    \\
 0&\mbox{for}&-\mu&\le&u&\le& \mu\\
-1&\mbox{for}&    &   &u&<  &-\mu
\end{array}\right\}.
\end{equation}
We select now a smooth symmetric $\psi(\hat x)\ge0$ supported in $\{|\hat x|\le 1\}$ with $\int\psi d\hat x=1$ and
define the Dirac sequence $\psi_R(x)=\frac{1}{R^d}\psi(\frac{x}{R})$.
Consider then the mollification of a function $v$ on scale $R$, defined as $v_R:=\psi_R*v$.
We have the identity
\begin{equation}\nonumber
\int\chi_\mu u=\int\chi_\mu(u-u_R)+\int\chi_{\mu,R}u.
\end{equation}
Using the duality between $H^1$ and $\dot{H}^{-1}$ in the second term on the right-hand side, we get the inequality
\begin{equation}\label{a}
\mu\int|\chi_\mu|\le\int\chi_\mu u\le\|u-u_R\|_1+\|\nabla\chi_{\mu,R}\|_2\;\||\nabla|^{-1} u\|_2.
\end{equation}
On the one hand, since $\psi_R$ is supported in $\{|x|\le R\}$ we have
\begin{equation}\label{b}
\|u-u_R\|_1\le R\|\nabla u\|_1.
\end{equation}
On the other hand, using the definition of $\chi_{\mu, R}$ and of $\psi_R$, we deduce
\begin{equation}\label{c}
\|\nabla\chi_{\mu,R}\|_2\le\|\nabla\psi_R\|_1\|\chi_\mu\|_2
=R^{-1}\|\hat\nabla\psi\|_1\left(\int|\chi_{\mu}|\right)^{\frac{1}{2}}.
\end{equation}
Plugging \eqref{b} and \eqref{c} into \eqref{a}, we get
\begin{equation}\nonumber
\mu\int|\chi_\mu|\le R\|\nabla u\|_1+R^{-1}\|\hat\nabla\psi\|_1\left(\int|\chi_\mu|\right)^{\frac{1}{2}}\||\nabla|^{-1}u\|_2.
\end{equation}
The choice of $R=\mu^{-\frac{1}{3}}$ thus yields after multiplication with $\mu^{\frac{1}{3}}$:
\begin{equation}\nonumber
\mu^{\frac{4}{3}}\int|\chi_\mu|\le\|\nabla u\|_1
+\|\hat\nabla\psi\|_1\left(\mu^{\frac{4}{3}}\int|\chi_\mu|\right)^{\frac{1}{2}}\||\nabla|^{-1}u\|_2.
\end{equation}
Using Young's inequality, we may absorb the first factor
of the second term on the right-hand side
and obtain the desired estimate.

\end{proof}
We give now the proof of Proposition \ref{P1}.
The interpolation estimate \eqref{est1} was first established by Cohen, Dahmen, Daubechies, and Devore (see Theorem 1.5 in \cite{CDDD})
by wavelet methods. We give here an elementary proof, which uses an idea by Ledoux \cite{Led}.
\begin{proof}[Proof of Proposition \ref{P1}]

By scaling in $x$ as in the proof of Lemma \ref{lem-weak} it is enough to prove
\begin{equation}\nonumber
\int|u|^\frac{4}{3}\lesssim\|\nabla u\|_1+\||\nabla|^{-1}u\|_2^2.
\end{equation}
For an arbitrary level $\mu>0$ we use the signed characteristic function $\chi_\mu$ defined in \eqref{chi}. Following an idea of Ledoux \cite{Led} for the proof a similar interpolation
inequality we introduce a large parameter $M\gg1$ to be adjusted later. We consider as before $\chi_{\mu,R}=\psi_R * \chi_\mu$, where $\psi_R$ is the convolution kernel defined in the proof of Lemma \ref{lem-weak}.
We have:
\begin{eqnarray}\nonumber
\int\chi_\mu u&=&\int(\chi_\mu-\chi_{\mu,R})u+\int\chi_{\mu,R}u\nonumber\\
&=&\int_{\{|u|\le M\mu\}}(\chi_\mu-\chi_{\mu,R})u
+\int_{\{|u|>M\mu\}}(\chi_\mu-\chi_{\mu,R})u+\int\chi_{\mu,R}u.\nonumber
\end{eqnarray}
Using that $\|\chi_\mu-\chi_{\mu,R}\|_\infty\le 2$, we obtain the inequality
\begin{eqnarray}\nonumber
\int_{\{|u|>\mu\}}|u|&\le& M\mu\int|\chi_\mu-\chi_{\mu,R}|
+2\int_{\{|u|>M\mu\}}|u|+\int\chi_{\mu,R}u\nonumber\\
&\le& M\mu R\int|\nabla\chi_\mu|
+2\int_{\{|u|>M\mu\}}|u|+\int\chi_{\mu,R}u.\nonumber
\end{eqnarray}
We multiply with $\mu^{-\frac{2}{3}}$ and choose $R=\mu^{-\frac{1}{3}}$ as in the proof of Lemma \ref{lem-weak}.
Integrating over $\mu\in(0,\infty)$ and using the duality between $\dot{H}^{-1}$ and $H^1$, we get
\begin{eqnarray}\nonumber
\lefteqn{\int_0^\infty\mu^{-\frac{2}{3}}\int_{\{|u|>\mu\}}|u|dxd\mu}\nonumber\\
&\le& M\int_0^\infty\int|\nabla\chi_\mu|dxd\mu
+2\int_0^\infty\mu^{-\frac{2}{3}}\int_{\{|u|>M\mu\}}|u|dxd\mu\nonumber\\
&&+\int\left(\int_0^\infty\mu^{-\frac{2}{3}}\chi_{\mu,R}d\mu\right)\:u\:dx\nonumber\\
&\le& M\int_0^\infty\int|\nabla\chi_\mu|dxd\mu
+2\int_0^\infty\mu^{-\frac{2}{3}}\int_{\{|u|>M\mu\}}|u|dxd\mu\nonumber\\
&&+\left\|\nabla\left(\int_0^\infty\mu^{-\frac{2}{3}}\chi_{\mu,R}d\mu\right)\right\|_2
\;\||\nabla|^{-1}u\|_2\nonumber,
\end{eqnarray}
where we keep the abbreviation $R=\mu^{-\frac{1}{3}}$.

On the left-hand side we have
\begin{equation}\nonumber
\int_0^\infty\mu^{-\frac{2}{3}}\int_{\{|u(x)|>\mu\}}|u(x)|dxd\mu
=\int|u(x)|\int_0^{|u(x)|}\mu^{-\frac{2}{3}}d\mu dx
=3\int|u|^\frac{4}{3}.
\end{equation}
We address the three terms on the right-hand side one by one. We start by the second one:
\begin{eqnarray}\nonumber
\lefteqn{\int_0^\infty\mu^{-\frac{2}{3}}\int_{\{|u(x)|>M\mu\}}|u(x)|dxd\mu}\nonumber\\
&=&\int|u(x)|\int_0^{M^{-1}|u(x)|}\mu^{-\frac{2}{3}}d\mu dx
=3M^{-\frac{1}{3}}\int|u|^\frac{4}{3}.\nonumber
\end{eqnarray}
We now address the first term.
By the coarea formula we get
\begin{equation}\nonumber
\int_0^\infty\int|\nabla\chi_\mu|dxd\mu=\int_0^\infty({\rm Per}(\{u>\mu\})+{\rm Per}(\{u<-\mu\}))d\mu
=\|\nabla u\|_1,
\end{equation}
where ${\rm Per}(A)$ denotes the perimeter of $A$.
Finally we consider the last term (with $R':={\mu'}^{-\frac{1}{3}}$):
\begin{eqnarray}\nonumber
\lefteqn{\left\|\nabla\left(\int_0^\infty\mu^{-\frac{2}{3}}\chi_{\mu,R}d\mu\right)\right\|_2^2}\nonumber\\
&=&\int_0^\infty\int_0^\infty\mu^{-\frac{2}{3}}{\mu'}^{-\frac{2}{3}}
\int\nabla\chi_{\mu,R}\cdot\nabla\chi_{\mu',R'}dx d\mu' d\mu\nonumber\\
&=&2\int_0^\infty\int_0^\mu \mu^{-\frac{2}{3}}{\mu'}^{-\frac{2}{3}}
\int(-\Delta)\chi_{\mu,R}\;\chi_{\mu',R'}dx d\mu' d\mu\nonumber\\
&=&2\int_0^\infty\int_0^\mu \mu^{-\frac{2}{3}}{\mu'}^{-\frac{2}{3}}
\int\psi_{R'}*(-\Delta\psi_{R})*\chi_{\mu}\;\chi_{\mu'}dx d\mu' d\mu\nonumber\\
&\le&2\int_0^\infty\int_0^\mu \mu^{-\frac{2}{3}}{\mu'}^{-\frac{2}{3}}
\|\psi_{R'}\|_1\;\|\Delta\psi_{R}\|_1\;\|\chi_{\mu}\|_1\;\|\chi_{\mu'}\|_\infty d\mu' d\mu\nonumber\\
&=&2\|\hat\Delta\psi\|_1\int_0^\infty\int_0^\mu \mu^{-\frac{2}{3}}{\mu'}^{-\frac{2}{3}}
R^{-2}\;\|\chi_{\mu}\|_1 d\mu' d\mu\nonumber\\
&=&2\|\hat\Delta\psi\|_1\int_0^\infty\int_0^\mu{\mu'}^{-\frac{2}{3}}d\mu'
\;\|\chi_{\mu}\|_1 d\mu\nonumber\\
&=&6\|\hat\Delta\psi\|_1\int_0^\infty\mu^{\frac{1}{3}}
\;|\{|u|>\mu\}| d\mu\;=\;
6\|\hat\Delta\psi\|_1\int\int_0^{|u(x)|}\mu^{\frac{1}{3}}d\mu dx\nonumber\\
&=&\frac{9}{2}\|\hat\Delta\psi\|_1\int|u|^{\frac{4}{3}}.\nonumber
\end{eqnarray}
These inequalities combine to
\begin{eqnarray}\nonumber
\lefteqn{3\int|u|^{\frac{4}{3}}}\nonumber\\
&\le& M\|\nabla u\|_{1}
+6M^{-\frac{1}{3}}\int|u|^{\frac{4}{3}}
+\left(\frac{9}{2}\|\hat\Delta\psi\|_1\int|u|^{\frac{4}{3}}\right)^\frac{1}{2}
\;\||\nabla|^{-1}u\|_2\nonumber.
\end{eqnarray}
We obtain the desired estimate by absorbing the middle right-hand side term for $M$ large enough and absorbing the first factor of the last right-hand side term by Young's inequality.
\end{proof}
\section{Proof of Proposition \ref{P2}}
In this section we prove Proposition \ref{P2}. We begin by recalling a geometric version of estimate \eqref{inter-est},
which was established by Conti, Niethammer, and one of the authors in \cite{CNO}.
\begin{lem}[\cite{CNO}]\label{P2-geom}
Let $\chi:[0,\Lambda]^2\rightarrow\{0,1\}$ be a periodic characteristic function with volume fraction
$\Phi:=\Lambda^{-2}\int\chi\ll1$.

Then, there exists a constant $C>0$ such that
\begin{equation}\label{geom-est}
\Phi\ln^{\frac{1}{3}}\frac{1}{\Phi}\leq C\left(\Lambda^{-2}\int|\nabla\chi|\right)^{\frac{2}{3}}
\left(\Lambda^{-2}\int||\nabla|^{-1}(\chi-\Phi)|^2\right)^{\frac{1}{3}}.
\end{equation}
\end{lem}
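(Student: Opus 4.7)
My plan is to prove \eqref{geom-est} in two stages: first the version \emph{without} the logarithmic factor as a direct consequence of Lemma \ref{lem-weak}, and then the logarithmic improvement via the dyadic geometric construction that is to be stated as Lemma \ref{geom} and that originates in \cite{CCKO}.

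For the first stage, I would apply Lemma \ref{lem-weak} to $u = \chi - \Phi$. Since $\chi$ is $\{0,1\}$-valued and $\Phi \ll 1$, the super-level set $\{|\chi-\Phi|\geq 1/2\}$ coincides with $\{\chi=1\}$ and hence has measure exactly $\Lambda^2 \Phi$; this forces $\|\chi-\Phi\|_{w-4/3} \gtrsim \Lambda^{3/2}\Phi^{3/4}$. Combined with $\|\nabla(\chi-\Phi)\|_1 = \int|\nabla\chi|$, Lemma \ref{lem-weak} yields $\Lambda^{3/2}\Phi^{3/4} \lesssim (\int|\nabla\chi|)^{1/2}\||\nabla|^{-1}(\chi-\Phi)\|_2^{1/2}$, and raising this to the power $4/3$ and dividing by $\Lambda^2$ recovers \eqref{geom-est} with the prefactor $1$ in place of $\ln^{1/3}(1/\Phi)$.

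To upgrade to the logarithmically-improved inequality, I would adapt Ledoux's trick from the proof of Proposition \ref{P1}, which passes from the weak to the strong estimate by integrating the weak inequality over the levels $\mu$ of $u$. For a $\{0,1\}$-valued $\chi$ there is essentially only one nontrivial level, so level-integration buys nothing; the point of \cite{CCKO} is to replace integration over levels by integration over \emph{length scales}. Concretely, I would fix a dyadic family $R_k = 2^k R_0$ for $k = 0, \ldots, K$ with $K \sim \log_2(1/\Phi)$ and $R_0$ comparable to the typical inter-droplet distance, and at each scale apply a scale-$R_k$ variant of the testing argument underlying Lemma \ref{lem-weak}: test $\chi-\Phi$ against its mollification $\chi_{R_k}$ and split the result into a BV-error bounded by $R_k\int|\nabla\chi|$ and a duality-$\dot H^{-1}$-error bounded by $\|\nabla\chi_{R_k}\|_2\,\||\nabla|^{-1}(\chi-\Phi)\|_2$. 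Combining the $K$ scale-wise inequalities (rather than optimizing a single $R$ as in the proof of Lemma \ref{lem-weak}), the discrete logarithmic sum $\sum_k R_k^{-1}\Delta R_k \sim K$ plays the analogue of Ledoux's $\int d\mu$ and, after the final raising to the power $4/3$ dictated by scaling, produces the factor $K^{1/3}\sim \log^{1/3}(1/\Phi)$ in front of $\Phi$.

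The principal obstacle, and the real substance of Lemma \ref{geom}, is the geometric claim that makes these $K$ scale-wise estimates yield non-redundant information: one must show that because $\chi\in\{0,1\}$ and $\Phi$ is small, the minority set admits a multiscale stopping-time decomposition in which a subset of $\{\chi=1\}$ of measure $\gtrsim \Lambda^2\Phi$ produces a mollification $\chi_{R_k}$ uniformly bounded below by a positive universal constant at each of the $K$ dyadic scales. Establishing this while keeping the constants uniform across scales, and avoiding destructive overlap between consecutive dyadic levels, is the delicate step; once it is in hand, the sum of weak estimates combines, via the same Young's-inequality absorption step used in the weak-to-strong passage of Proposition \ref{P1}, into the asserted strong inequality \eqref{geom-est}.
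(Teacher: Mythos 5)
Your first stage is fine: applying Lemma \ref{lem-weak} to $u=\chi-\Phi$ and noting that $\{|\chi-\Phi|\geq 1/2\}=\{\chi=1\}$ has measure $\Lambda^{2}\Phi$ does recover \eqref{geom-est} without the logarithm.

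The second stage, however, does not work, and it also misidentifies what Lemma \ref{geom} provides. You propose to integrate the scale-$R$ weak estimate over dyadic scales $R_k=2^kR_0$, hoping that $\sum_k \Delta R_k/R_k\sim K$ plays the role of Ledoux's $\int d\mu$ and puts a factor $K$ on the left. But each scale-wise estimate, $\int\chi\lesssim R_k\int|\nabla\chi|+R_k^{-1}(\int\chi)^{1/2}\||\nabla|^{-1}(\chi-\Phi)\|_2$, bounds the \emph{full} minority measure by the \emph{same} global field energy, so summing over $k$ accumulates the losses, not the gains: the BV term sums to $\sim R_K\int|\nabla\chi|$ and the field term to $\sim R_0^{-1}(\int\chi)^{1/2}\||\nabla|^{-1}(\chi-\Phi)\|_2$, and the resulting inequality $K\int\chi\lesssim\cdots$ is never better than $K$ copies of the single-scale bound, i.e.\ a tautology. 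Moreover the geometric claim you identify as the substance of Lemma \ref{geom} --- that a subset of $\{\chi=1\}$ of measure $\gtrsim\Lambda^2\Phi$ has $\chi_{R_k}$ bounded below uniformly over $K$ dyadic scales --- is simply false: if $\{\chi=1\}$ is a union of droplets of radius $r$, then at a droplet center $\chi_R\sim (r/R)^2$ once $R\gg r$, which is $\ll 1$.

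What Lemma \ref{geom} actually supplies is single-scale and is not a mollification of $\chi$. One covers the thickened minority set $\Omega_R$ by $N\lesssim R^{-2}\int\chi$ balls $B_R(y_i)$ via a maximal packing, and sets $\phi_{R,L}(x)=\max_i\hat\phi_{R,L}(x-y_i)$ where $\hat\phi_{R,L}$ is the planar capacity potential of $B_R$ relative to $B_L$, equal to $\ln(L/|x|)/\ln(L/R)$ for $R\le|x|\le L$. Because the Dirichlet energy of that capacity potential is $2\pi/\ln(L/R)$ per ball, one gets $\int|\nabla\phi_{R,L}|^2\lesssim R^{-2}\ln^{-1}(L/R)\int\chi$ --- a factor $\ln^{-1}(L/R)$ smaller than testing against $\chi_R$ would give, so the logarithm enters as a \emph{division} on the right-hand side, not a multiplication on the left. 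The cost is the tail term $\Phi\int\phi_{R,L}\lesssim\Phi(L/R)^2\int\chi$; choosing $L$ a small multiple of $\Phi^{-1/2}R$ absorbs it and forces $\ln(L/R)\sim\ln(1/\Phi)$, after which optimizing in $R$ exactly as in Lemma \ref{lem-weak} yields \eqref{geom-est}.
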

The proof of this Lemma made use of the following geometric construction, that plays a crucial role also in the proof of our strong estimate \eqref{inter-est}.
\begin{lem}[\cite{CNO}]\label{geom}
 For any periodic function $\chi:[0,\Lambda]^2\rightarrow\{0,1\}$ and $R\ll L$ there
exists a potential $\phi_{R,L}(x)\in[0,1]$ such that
\begin{eqnarray}
\int\chi&\lesssim& R\int|\nabla\chi|+\int\chi\phi_{R,L},\label{1}\\
\int\max\{-\Delta\phi_{R,L},0\}&\lesssim&R^{-2}\left(\ln^{-1}\frac{L}{R}\right)\int\chi,\label{2}\\
\int\phi_{R,L}&\lesssim&L^2R^{-2}\int\chi.\label{3}
\end{eqnarray}
\end{lem}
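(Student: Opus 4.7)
The plan is to construct $\phi_{R,L}$ by combining a Calder\'on--Zygmund-type dyadic decomposition of $[0,\Lambda]^2$ at scale $R$ with a two-dimensional log-harmonic profile for the potential; it is the log-harmonicity, available only in $d=2$, that produces the logarithmic gain in \eqref{2}.

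First, I would tile $[0,\Lambda]^2$ by axis-parallel cubes $\{Q_i\}$ of side $R$ and call $Q_i$ \emph{dense} if $\int_{Q_i}\chi\geq\tfrac{1}{2}|Q_i|$ and \emph{sparse} otherwise. On each sparse cube the relative isoperimetric inequality in two dimensions yields $\int_{Q_i}\chi\lesssim R\int_{Q_i}|\nabla\chi|$, and summing over sparse indices produces the $R\int|\nabla\chi|$ term appearing in \eqref{1}. Writing $H$ for the union of dense cubes, the density condition gives $|H|\leq 2\int_H\chi\leq 2\int\chi$, so the number $N$ of dense cubes satisfies $N\lesssim R^{-2}\int\chi$.

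Next, for each dense cube of center $x_i$, I would introduce the radial log-profile
\[
\phi_i(x):=\max\bigl\{0,\,\min\bigl\{1,\,\ln(L/|x-x_i|)/\ln(L/R)\bigr\}\bigr\},
\]
which equals $1$ on $B_R(x_i)\supset Q_i$, vanishes outside $B_L(x_i)$, and is harmonic in the annulus because $\ln|x|$ is harmonic in two dimensions. Consequently $-\Delta\phi_i$ is a positive surface measure on $\partial B_R(x_i)$ of mass $2\pi/\ln(L/R)$ plus a negative one on $\partial B_L(x_i)$, and $\int\phi_i\lesssim L^2$. Setting $\phi_{R,L}:=\min\bigl\{1,\sum_i\phi_i\bigr\}$ yields a function in $[0,1]$ that equals $1$ on every dense cube and is supported in $\bigcup_i B_L(x_i)$; then \eqref{1} follows by combining with the sparse estimate, and \eqref{3} follows from $\int\phi_{R,L}\leq\sum_i\int\phi_i\lesssim NL^2\lesssim L^2R^{-2}\int\chi$.

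The delicate bound is \eqref{2}. Writing $\psi:=\sum_i\phi_i$, a distributional integration by parts gives
\[
-\Delta\phi_{R,L}\;=\;(-\Delta\psi)\,\mathbf{1}_{\{\psi\leq 1\}}\;+\;|\nabla\psi|\,d\sigma,
\]
where $d\sigma$ denotes $1$-dimensional Hausdorff measure on the level set $\{\psi=1\}$. The positive part of the first term is carried by $\bigcup_i\partial B_R(x_i)$, with total mass $\lesssim N/\ln(L/R)\lesssim R^{-2}\ln^{-1}(L/R)\int\chi$, as required. The hard part will be controlling the surface term arising from the truncation at height $1$, which captures collisions between overlapping log potentials where dense cubes cluster. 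I expect to handle it by one of two routes: either (i) replace the plain sum by a Besicovitch-style greedy selection of centers enforcing $\psi\leq 1$ pointwise, making the truncation vacuous, or (ii) estimate $\int_{\{\psi=1\}}|\nabla\psi|\,d\sigma$ directly via the co-area formula and the pointwise bound $|\nabla\phi_i(x)|\leq 1/(|x-x_i|\ln(L/R))$, exploiting that the level set $\{\psi=1\}$ keeps a distance of order $R$ from each center $x_i$.
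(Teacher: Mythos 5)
Your construction deviates from the paper's in one crucial place, and it is exactly where you flag the difficulty: instead of forming $\phi_{R,L}$ by the truncated sum $\min\{1,\sum_i\phi_i\}$, the paper takes the pointwise \emph{maximum} $\phi_{R,L}=\max_i\hat\phi_{R,L}(\cdot-y_i)$ of the radial log-profiles over the selected centers, and this one change makes \eqref{2} essentially automatic. The maximum of two (locally) harmonic functions is subharmonic across their crossing set, so the singular part of $-\Delta\max\{\phi_1,\phi_2\}$ concentrated on $\{\phi_1=\phi_2\}$ is \emph{negative} and is simply discarded upon taking $\max\{-\Delta\phi_{R,L},0\}$. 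What survives is at most the sum of the individual positive parts, i.e.\ the surface measures on $\partial B_R(y_i)$, each of mass $2\pi/\ln(L/R)$, giving $\int\max\{-\Delta\phi_{R,L},0\}\leq 2\pi N\ln^{-1}(L/R)\lesssim R^{-2}\ln^{-1}(L/R)\int\chi$, with no truncation term to estimate at all.

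With your $\min\{1,\sum_i\phi_i\}$ the truncation introduces the positive singular measure $|\nabla\psi|\,d\sigma$ on $\{\psi=1\}$ that you rightly identify as the hard term, and neither of your two sketched routes settles it as stated. Route (i), a greedy selection enforcing $\psi\le 1$ pointwise, conflicts with \eqref{1}: you need $\phi_{R,L}\equiv 1$ on every dense cube, and where dense cubes cluster the log-profiles unavoidably overlap, so enforcing $\psi\le 1$ either sacrifices coverage or requires a modification you do not supply. Route (ii) as phrased would require a length bound on $\{\psi=1\}$, which has no obvious $O(NR)$ control when profiles pile up (the level set can be pushed out toward scale $L$). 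The clean way to close your route is Green's identity: $\int_{\{\psi=1\}}|\nabla\psi|\,d\sigma=\int_{\{\psi>1\}}(-\Delta\psi)\le\int\max\{-\Delta\psi,0\}=2\pi N\ln^{-1}(L/R)$, since the positive part of $-\Delta\psi$ is carried by $\bigcup_i\partial B_R(x_i)$. This would rescue your construction, but as written your proof of \eqref{2} is incomplete. By contrast, your dyadic-cube decomposition with the relative isoperimetric inequality for \eqref{1} and \eqref{3} is a perfectly valid alternative to the paper's ball covering via a maximal $R/2$-separated net, and gives the same count $N\lesssim R^{-2}\int\chi$.
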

We note that for $L=R$ we could just choose $\phi_{R,L}=\psi_R*\chi=\chi_R$;
the interest here is the logarithmic gain $\ln^{-1}\frac{L}{R}$ for $L\gg R$.
\begin{oss}
We observe, for later reference, that for any function $\phi'(x)\in[0,1]$ we have
\begin{equation}\label{2a}
\int\nabla\phi_{R,L}\cdot\nabla\phi'\lesssim R^{-2}\left(\ln^{-1}\frac{L}{R}\right)\int\chi.
\end{equation}
Indeed, we have
\begin{eqnarray*}\nonumber
\lefteqn{\int\nabla\phi_{R,L}\cdot\nabla\phi'}\nonumber\\
&=&\int(-\Delta\phi_{R,L})\phi'
{\le}\int\max\{-\Delta\phi_{R,L},0\}\phi'\nonumber\\
&{\le}&\int\max\{-\Delta\phi_{R,L},0\}\le R^{-2}\left(\ln^{-1}\frac{L}{R}\right)\int\chi ,
\end{eqnarray*}
where in the first two inequalities we have used $\phi'\geq 0$ and $\phi'\leq 1$ respectively. The last inequality follows by applying \eqref{2}.

In particular, we obtain for $\phi'=\phi_{R,L}$
\begin{equation}\label{4}
\int|\nabla\phi_{R,L}|^2\lesssim R^{-2}\left(\ln^{-1}\frac{L}{R}\right)\int\chi.
\end{equation}
\end{oss}
This type of geometric construction was first used by Choksi, Conti, Kohn, and  one of the authors in \cite{CCKO} in the context of branched patterns in superconductors.

For the convenience of the reader we reproduce a version of the proof of Lemma \ref{geom}.
\begin{proof}[Proof of Lemma \ref{geom}]
We split the proof in two steps.

{\bf Step 1.} In the first step, we construct a set $\Omega_R$ that covers most of $\{\chi=1\}$ (see {\it Claim 1} below)
and has radius of curvature $\lesssim R$ (see {\it Claim 2} below). As before, let $\chi_R=\psi_R*\chi$ denote the mollification
of $\chi$ on scale $R$. We define
\begin{equation}\nonumber
\Omega_R:=\{\chi_R>1/2\}.
\end{equation}

This time, we take the non-smooth ``Dirac sequence''
\begin{equation}\nonumber
\psi_R(x)
=\left\{
\begin{array}{ccc}
\frac{4}{\pi R^2}&\mbox{for}&|x|<  \frac{R}{2}\\
0                &\mbox{for}&|x|\ge\frac{R}{2}
\end{array}\right\},
\end{equation}
so that $\Omega_R$ can be characterized via the density of $\{\chi=1\}$ in balls of
radius $R/2$ as follows
\begin{equation}\nonumber
\Omega_R=\left\{x\;\big|\;|\{\chi=1\}\cap B_\frac{R}{2}(x)|>{\textstyle\frac{1}{2}}|B_\frac{R}{2}(x)|\right\}.
\end{equation}
We show now the two following claims.

{{\it Claim 1}}: for $\Omega_R$ defined above we have
\begin{eqnarray}\label{cl-1}
\int\chi&\lesssim& R\int|\nabla\chi|+\int_{\Omega_R}\chi.
\end{eqnarray}
%
Indeed,
$$\int\chi-\int_{\Omega_R}\chi=|\{\chi=1\}\cap\{\chi_R\le 1/2\}|\le2\|\chi-\chi_R\|_1\le2R\int|\nabla\chi|.$$

{\it Claim 2}:
There exists a finite number $N$ of points $y_i\in \Omega_R$ for $i=1,...,N$, such that
$$\Omega_R\subset\bigcup_{i=1}^N B_R(y_i)\quad\mbox{and}\quad
N\lesssim\frac{1}{R^2}\int\chi.$$

Indeed, let $\{y_1,...,y_N\}$ be maximal with the property that $B_\frac{R}{2}(y_i)\cap B_\frac{R}{2}(y_j)=\emptyset$
for any $i,j=1,...,N,$ with $i\neq j$. The first part of the claim follows from the maximality of $\{y_1,...,y_N\}$; indeed
if there were an $y_0\in\Omega_R$ with $y_0\not\in B_R(y_i)$
and thus $B_\frac{R}{2}(y_0)\cap B_\frac{R}{2}(y_i)=\emptyset$ for all $i=1,...,N$, also the strictly larger
set $\{y_0,y_1,...,y_N\}$ would be admissible.

The second part of the claim can be seen as follows:
\begin{equation}\nonumber
N\frac{\pi}{4} R^2=\sum_{i=1}^N|B_\frac{R}{2}(y_i)|
<2\sum_{i=1}^N|\{\chi=1\}\cap B_\frac{R}{2}(y_i)|\le2|\{\chi=1\}|,
\end{equation}
where in the first inequality, we have used that for $y_i\in \Omega_R$  we have
$|\{\chi=1\}\cap B_\frac{R}{2}(y)|>\frac{1}{2}|B_\frac{R}{2}(y)|$. In the last inequality we have used
the pairwise disjointness of $\{B_\frac{R}{2}(y_i)\}_{i=1,...,N}$.

{\bf Step 2.}
In the second step, we construct the potential $\phi_{R,L}$. We introduce the
capacity potential $\hat\phi_{R,L}$ of $B_R(0)$ in $B_L(0)$ given by

\begin{equation}\nonumber
\hat\phi_{R,L}(\hat x):=\left\{\begin{array}{ccccccc}
1&\mbox{for}&&&|\hat x|&\le&R\\
\frac{\ln\frac{L}{|\hat x|}}{\ln\frac{L}{R}}&\mbox{for}&R&\le&|\hat x|&\le&L\\
0&\mbox{for}&L&\le&|\hat x|&&
\end{array}\right\}\;\in\;[0,1].
\end{equation}

We define
\begin{equation}\nonumber
\phi_{R,L}(x):=\max_{i=1,...,N}\hat\phi_{R,L}(x-y_i)\;\in\;[0,1].
\end{equation}

{{\it Claim 3}}: we have $$\int\chi\lesssim R\int|\nabla\chi|+\int\chi\phi_{R,L}.$$
Indeed in {\it Claim 2} we have seen that $\Omega_R \subset \bigcup_{i=1}^N B_R(y_i)$, therefore, since by definition $\hat \phi_{R,L}=1$ in $B_R(0)$, we deduce that $\phi_{R,L}=1$ on $\Omega_R$. This, together with
{\it Claim 1} implies {\it Claim 3} and thus \eqref{1}.

{{\it Claim 4}}: We have $$\int\phi_{R,L}\lesssim L^2R^{-2}\int\chi.$$ Indeed, by the definition of $\phi_{R,L}$, we have
\begin{equation}\nonumber
\int\phi_{R,L}\le N\int\hat\phi_{R,L}\lesssim N L^2
\lesssim L^2 R^{-2} \int\chi,
\end{equation}
where we have used the upper bound on $N$ established in {\it Claim 2}.

{{\it Claim 5}}: $$\int\max\{-\Delta\phi_{R,L},0\}\lesssim R^{-2}(\ln^{-1}\frac{L}{R})\int\chi.$$
Indeed, using the well-known fact that the singular part of
$(-\Delta)\max\{\phi_1,\phi_2\}$ is negative, we conclude similarly to
the previous step:
\begin{eqnarray}\nonumber
\lefteqn{\int\max\{-\Delta\phi_{R,L},0\}\le N\int\max\{-\Delta\hat\phi_{R,L},0\}}\nonumber\\
&=& N 2\pi\ln^{-1}\frac{L}{R}
\lesssim R^{-2}\left(\ln^{-1}\frac{L}{R}\right) \int\chi.\nonumber
\end{eqnarray}
This concludes the proof of Lemma \ref{geom}.
\end{proof}
For the convenience of the reader, we reproduce the proof of Lemma \ref{P2-geom} from \cite{CNO}.
\begin{proof}[Proof of Lemma \ref{P2-geom}]
 By the three properties of the geometric construction we have
\begin{eqnarray*}
\int\chi
&\lesssim&
R\int|\nabla\chi|+\int\phi_{R,L}\chi\\
&=&R\int|\nabla\chi|+\int\phi_{R,L}(\chi-\Phi)+\Phi\int\phi_{R,L}\\
&\le&R\int|\nabla\chi|+\left(\int|\nabla\phi_{R,L}|^2\int||\nabla|^{-1}(\chi-\Phi)|^2\right)^{\frac{1}{2}}
+\Phi\int\phi_{R,L}\\
&\lesssim&R\int|\nabla\chi|+\left(R^{-2}\left(\ln^{-1}\frac{L}{R}\right)\int\chi
\int||\nabla|^{-1}(\chi-\Phi)|^2\right)^{\frac{1}{2}}
+\Phi\left(\frac{L}{R}\right)^2\int\chi.
\end{eqnarray*}
We first absorb the factor $\int \chi$ of the middle right-hand side term by Young's inequality to get
\begin{eqnarray*}
\int\chi&\lesssim&
R\int|\nabla\chi|+R^{-2}\left(\ln^{-1}\frac{L}{R}\right)
\int||\nabla|^{-1}(\chi-\Phi)|^2
+\Phi \left(\frac{L}{R}\right)^{2}\int\chi.
\end{eqnarray*}
In order to absorb the last right-hand side term, we choose $L$ to be a small but order one multiple of $\Phi^{-\frac{1}{2}}R$.
Since $L$ is a small multiple of $\Phi^{-\frac{1}{2}}R$, we have $\Phi(\frac{L}{R})^2\ll 1$ so that indeed we can absorb;
since it is an order one multiple of $\Phi^{-\frac{1}{2}}R$ and $\Phi\ll 1$, we have $L\gg R$ and $\ln\frac{L}{R}\sim\ln\frac{1}{\Phi}$.
Hence we obtain:
\begin{eqnarray*}
\int\chi&\lesssim&
R\int|\nabla\chi|+R^{-2}\left(\ln^{-1}\frac{1}{\Phi}\right)
\int||\nabla|^{-1}(\chi-\Phi)|^2.
\end{eqnarray*}
We finally optimize in $R$ by choosing
$R=(\int|\nabla\chi|)^{-\frac{1}{3}}((\ln^{-1}\frac{1}{\Phi})\int||\nabla|^{-1}(\chi-\Phi)|^2)^{\frac{1}{3}}$, and we get
\begin{eqnarray*}
\int\chi&\lesssim&
\left(\ln^{-\frac{1}{3}}\frac{1}{\Phi}\right)\left(\int|\nabla\chi|\right)^{\frac{2}{3}}
\left(\int||\nabla|^{-1}(\chi-\Phi)|^2\right)^{\frac{1}{3}}.
\end{eqnarray*}
Dividing by $\Lambda^2$ and multiplying by $\ln^{\frac{1}{3}}\frac{1}{\Phi}$, we obtain the desired
estimate.
\end{proof}
As in the previous section,  we recall here the weak version of our interpolation inequality \eqref{inter-est} in dimension $2$, which was proven in the  PhD thesis of Viehmann \cite{V}.
\begin{prop}[\cite{V}]\label{weak-est2}
Let $u:[0,\Lambda]^2\rightarrow \re$ be a periodic function with $u\geq -1$ and $\int u=0$.

Then, there exists a constant $C>0$ such that
\begin{equation}\nonumber
\sup_{\mu\ge e}\mu(\ln^{\frac{1}{4}}\mu)\,|\{|u|>\mu\}|^\frac{3}{4}\lesssim
\|\nabla u\|_1^\frac{1}{2}\;\||\nabla|^{-1}u\|_{2}^\frac{1}{2}.
\end{equation}\end{prop}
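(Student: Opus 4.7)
The plan is to adapt the proof of Lemma \ref{lem-weak}, replacing the simple mollifier $\psi_R\ast\chi_\mu$ by the potential $\phi_{R,L}$ from the geometric construction of Lemma \ref{geom}. The hypothesis $u\ge -1$ plays the role of the symmetry under $u\mapsto -u$ used in Lemma \ref{lem-weak}: since $\mu\ge e>1$, the lower level sets $\{u<-\mu\}$ are empty, so $\{|u|>\mu\}=\{u>\mu\}$.

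Fix $\mu\ge e$. For $\mu$ in any bounded range, Lemma \ref{lem-weak} alone suffices since $\ln^{1/4}\mu$ is then an absolute constant, so I may assume $\mu$ is large enough to carry the eventual logarithmic factor. By the coarea formula,
$$\int_{\mu/2}^{\mu}\mathrm{Per}(\{u>s\})\,ds\le\|\nabla u\|_1,$$
so there exists $\mu^{*}\in[\mu/2,\mu]$ with $\mathrm{Per}(\{u>\mu^{*}\})\lesssim\|\nabla u\|_1/\mu$. Set $\chi:=\chi_{\{u>\mu^{*}\}}$; note $|\{u>\mu\}|\le\int\chi$.

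Apply Lemma \ref{geom} to $\chi$ with radii $R$ and $L=\alpha R\mu^{1/2}$, for a small absolute constant $\alpha>0$ to be fixed. By \eqref{1} and the perimeter bound,
$$\int\chi\lesssim R\,\mathrm{Per}(\{u>\mu^{*}\})+\int\chi\phi_{R,L}\lesssim\frac{R\|\nabla u\|_1}{\mu}+\int\chi\phi_{R,L}.$$
For the second term, the pointwise inequality $u\ge\mu^{*}\chi-(1-\chi)$ holds (check separately on $\{\chi=1\}$ using $u>\mu^{*}$ and on $\{\chi=0\}$ using $u\ge-1$). Integrating against $\phi_{R,L}\ge 0$ and rearranging gives
$$\mu^{*}\int\chi\phi_{R,L}\le\int u\phi_{R,L}+\int\phi_{R,L}.$$
Since $\int u=0$, $H^{1}$–$\dot H^{-1}$ duality combined with \eqref{4} yields
$$\int u\phi_{R,L}\le\|\nabla\phi_{R,L}\|_{2}\,\||\nabla|^{-1}u\|_{2}\lesssim R^{-1}\ln^{-1/2}(L/R)\left(\int\chi\right)^{1/2}\||\nabla|^{-1}u\|_{2},$$
while \eqref{3} gives $\int\phi_{R,L}\lesssim L^{2}R^{-2}\int\chi=\alpha^{2}\mu\int\chi$.

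Assembling these bounds and using $\mu^{*}\ge\mu/2$ yields
$$\mu\int\chi\lesssim R\|\nabla u\|_{1}+R^{-1}\ln^{-1/2}(\alpha\mu^{1/2})\left(\int\chi\right)^{1/2}\||\nabla|^{-1}u\|_{2}+\alpha^{2}\mu\int\chi.$$
Choosing $\alpha$ small enough to absorb the last term, and $\mu$ large enough that $\ln(\alpha\mu^{1/2})\sim\ln\mu$, followed by Young's inequality optimizing in $R$, produces
$$\mu\left(\int\chi\right)^{3/4}\ln^{1/4}\mu\lesssim\|\nabla u\|_{1}^{1/2}\||\nabla|^{-1}u\|_{2}^{1/2},$$
which combined with $|\{|u|>\mu\}|\le\int\chi$ and $\mu\sim\mu^{*}$ delivers the claim. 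The main obstacle is the calibration of the hierarchy $R\ll L$: the geometric construction converts the gain $\ln^{-1}(L/R)$ in \eqref{4} into a useful factor only if the spread term $\int\phi_{R,L}\lesssim L^{2}R^{-2}\int\chi$ can be absorbed by $\mu\int\chi$. Forcing $L\sim\mu^{1/2}R$ to make the absorption work is precisely what turns the free parameter $L/R$ into a power of $\mu$, thereby producing the logarithm of $\mu$ on the left-hand side.
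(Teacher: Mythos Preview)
Your proof is correct and follows essentially the same approach as the paper's: apply the geometric construction of Lemma~\ref{geom} to the super-level set, use $u\ge -1$ to pass from $\int\chi\phi_{R,L}$ to $\mu^{-1}\int u\phi_{R,L}+\mu^{-1}\int\phi_{R,L}$, choose $L\sim\mu^{1/2}R$ to absorb the latter, and invoke the coarea formula to convert the perimeter into $\mu^{-1}\|\nabla u\|_1$. The only cosmetic differences are that the paper performs the coarea selection of a good level \emph{after} deriving the intermediate estimate \eqref{weak} and passes through the additive form via scaling in $x$, whereas you select the good level first and stay in multiplicative form by directly optimizing in $R$.
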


\begin{proof}[Proof of Proposition \ref{weak-est2}] By Lemma \ref{lem-weak} and
by a scaling argument in $x$, it is enough to show
\begin{equation}\nonumber
\sup_{\mu\gg 1}\mu^{\frac{4}{3}}(\ln^{\frac{1}{3}}\mu)\,|\{|u|>\mu\}|\lesssim
\|\nabla u\|_1+\||\nabla|^{-1}u\|_{2}^2.
\end{equation}
For a given level $\mu\gg 1$ we consider the characteristic function $\chi_\mu(x)\in\{0,1\}$ of the corresponding
level set of $u$, that is
\begin{equation}\nonumber
\{\chi_\mu=1\}=\{u>\mu\}.
\end{equation}
For given length scales $R\ll L$ (to be chosen later) let $\phi_{\mu,R,L}$ be the potential
constructed in Lemma \ref{geom} based on $\chi_\mu$. According to Lemma \ref{geom} we have
\begin{eqnarray*}
\int\chi_\mu&\lesssim&R\int|\nabla\chi_\mu|+\int\chi_\mu\phi_{\mu,R,L}.
\end{eqnarray*}
 Using that $\phi_{\mu,R,L}\geq 0$ and the crucial assumption $u\ge-1$, we rewrite this as
\begin{eqnarray*}
\lefteqn{R\int|\nabla\chi_\mu|+\int\chi_\mu\phi_{\mu,R,L}}\\
&{\le}&R\int|\nabla\chi_\mu|+\mu^{-1}\int\chi_\mu\phi_{\mu,R,L} u\\
&=&R\int|\nabla\chi_\mu|+\mu^{-1}\int\chi_\mu\phi_{\mu,R,L}(u+1)-\mu^{-1}\int\chi_\mu\phi_{\mu,R,L}\\
&{\le}&R\int|\nabla\chi_\mu|+\mu^{-1}\int\phi_{\mu,R,L}(u+1)-\mu^{-1}\int\chi_\mu\phi_{\mu,R,L}\\
&=&R\int|\nabla\chi_\mu|+\mu^{-1}\int\phi_{\mu,R,L}u+\mu^{-1}\int(1-\chi_\mu)\phi_{\mu,R,L}\\
&{\le}&R\int|\nabla\chi_\mu|+\mu^{-1}\int\phi_{\mu,R,L}u+\mu^{-1}\int\phi_{\mu,R,L}\\
&\le&R\int|\nabla\chi_\mu|+\mu^{-1}\left(\int|\nabla\phi_{\mu,R,L}|^2\int||\nabla|^{-1}u|^2\right)^\frac{1}{2}
+\mu^{-1}\int\phi_{\mu,R,L}.
\end{eqnarray*}
We now insert estimates \eqref{3} and \eqref{4} from Lemma \ref{geom} to obtain
\begin{eqnarray*}
\int\chi_\mu
&\lesssim&
R\int|\nabla\chi_\mu|\\
&&+\mu^{-1}\left(R^{-2}\left(\ln^{-1}\frac{L}{R}\right)\int\chi_\mu\int||\nabla|^{-1}u|^2\right)^\frac{1}{2}
+\mu^{-1}\left(\frac{L}{R}\right)^2\int\chi_\mu.
\end{eqnarray*}
In order to absorb the last right-hand side term, we choose $L$ to be a small but order one multiple of $\mu^\frac{1}{2}R$.
Since $L$ is a small multiple of $\mu^\frac{1}{2}R$, we have $\mu^{-1}(\frac{L}{R})^2\ll 1$ so that indeed we can absorb;
since it is an order one multiple of $\mu^\frac{1}{2}R$ and $\mu\gg 1$, we have $L\gg R$ and $\ln\frac{L}{R}\sim\ln\mu$.
Hence we obtain:
\begin{eqnarray*}
\int\chi_\mu
&\lesssim&
R\int|\nabla\chi_\mu|+\mu^{-1}\left(R^{-2}(\ln^{-1}\mu)\int\chi_\mu\int||\nabla|^{-1}u|^2\right)^\frac{1}{2}.
\end{eqnarray*}
In order to absorb the factor $\int\chi_\mu$ of the last remaining right-hand side term, we use Young's inequality and we get
\begin{equation}\label{weak}
{|\{u>\mu\}|=\int\chi_\mu}
\lesssim
R\int|\nabla\chi_\mu|+\mu^{-2}R^{-2}(\ln^{-1}\mu)\int||\nabla|^{-1}u|^2.
\end{equation}
By the coarea formula, we have $\int_{\frac{\mu}{2}}^\mu\int|\nabla\chi_\mu|dx d\mu\le\int|\nabla u|$
so that there exists a $\mu'\in[\frac{\mu}{2},\mu]$ with $\mu\int|\nabla\chi_{\mu'}|\le 2\int|\nabla u|$.
Using \eqref{weak} for $\mu$ replaced by $\mu'$ we thus have
\begin{eqnarray*}
|\{u>\mu'\}|
&\lesssim&
R\mu^{-1}\int|\nabla u|+{\mu'}^{-2}R^{-2}(\ln^{-1}{\mu'})\int||\nabla|^{-1}u|^2,
\end{eqnarray*}
which because of $\mu'\in[\frac{\mu}{2},\mu]$ turns into
\begin{eqnarray*}
|\{u>\mu\}|
&\lesssim&
R\mu^{-1}\int|\nabla u|+\mu^{-2}R^{-2}(\ln^{-1}\mu)\int||\nabla|^{-1}u|^2.
\end{eqnarray*}
We multiply with $\mu^{\frac{4}{3}}\ln^\frac{1}{3}\mu$ and we get
\begin{eqnarray*}
\lefteqn{\mu^{\frac{4}{3}}\ln^{\frac{1}{3}}\mu|\{u>\mu\}|}\nonumber\\
&\lesssim&
R(\mu\ln\mu)^{\frac{1}{3}}\int|\nabla u|+R^{-2}(\mu\ln\mu)^{-\frac{2}{3}}\int||\nabla|^{-1}u|^2.
\end{eqnarray*}
The choice of $R=(\mu\ln\mu)^{-\frac{1}{3}}$ yields the desired estimate.
\end{proof}
We now give the proof of our strong interpolation inequality in dimension $2$.
\begin{proof}[Proof of Proposition \ref{P2}]
By an approximation argument, we can assume that $u$ is a step function (indeed all the quantities appearing in inequality \eqref{inter-est} well behave under approximation by step functions).
 By a scaling argument in $x$ and the result in Proposition \ref{P1}, it is enough to show for $M\gg 1$:
\begin{equation}\nonumber
\int_{\{u\ge 2M\}}u^{\frac{4}{3}}\ln^{\frac{1}{3}}u\lesssim
\|\nabla u\|_1+\||\nabla|^{-1}u\|_{2}^2.
\end{equation}
We consider an arbitrary level $\mu\ge M\gg 1$ and start as in the proof of Proposition \ref{weak-est2}, considering the potential $\phi_{\mu,R,L}$. Observe that, since we are assuming that $u$ is a step function, then $\phi_{\mu,R,L}$ is piecewise constant as a function of $\mu$, and therefore it is measurable in $\mu$. This will be important later since we will integrate $\phi_{\mu,R,L}$ in $d\mu$.
For $L$ chosen such that $(\frac{L}{R})^2\sim\mu$, we get
\begin{eqnarray*}
\int\chi_\mu&\lesssim&R\int|\nabla\chi_\mu|+\mu^{-1}\int\phi_{\mu,R,L} u.
\end{eqnarray*}
But we now rather proceed as in Proposition \ref{P1}. We multiply with $(\mu\ln\mu)^\frac{1}{3}$,
choose $R=(\mu\ln\mu)^{-\frac{1}{3}}$ and integrate in $\mu\in(M,\infty)$ for $M\gg 1$:
\begin{eqnarray}\label{P2a}
\lefteqn{\int_M^\infty(\mu\ln\mu)^\frac{1}{3}\int\chi_\mu dx\,d\mu}\nonumber\\
&\lesssim&\int_M^\infty\int|\nabla\chi_\mu| dx d\mu
+\int\left(\int_M^\infty\frac{\ln^\frac{1}{3}\mu}{\mu^\frac{2}{3}}\phi_{\mu,R,L}d\mu\right) \:u\: dx\nonumber \\
&\le&\|\nabla u\|_1
+\left\|\nabla\left(\int_M^\infty\frac{\ln^\frac{1}{3}\mu}{\mu^\frac{2}{3}}\phi_{\mu,R,L}d\mu\right)\right\|_2\;\||\nabla|^{-1} u\|_2.
\end{eqnarray}
On the last right-hand side term we argue along the lines of Proposition \ref{P1}, now using the property \eqref{2a}
of our geometric construction,  that is
\begin{equation}\label{P2b}\int\nabla\phi_{\mu,R,L}\cdot\nabla\phi_{\mu',R',L'}dx\lesssim
\frac{1}{R^2}\frac{1}{\ln\frac{L}{R}}\int\chi_\mu dx,\end{equation}
where $R'$ and $L'$ are related to $\mu'$ like $R$ and $L$ to $\mu$, that is, $R':=({\mu'}\ln \mu')^{-\frac{1}{3}}$, $(\frac{L'}{R'})^2\sim\mu'$.
Using \eqref{P2b} and by the choice of $R$ and $L$, we get
\begin{eqnarray}\nonumber
\lefteqn{\left\|\nabla\left(\int_M^\infty\frac{\ln^\frac{1}{3}\mu }{\mu^\frac{2}{3}}\phi_{\mu,R,L}d\mu\right)\right\|_2^2}\nonumber\\
&=&\int_M^\infty\int_M^\infty\frac{\ln^\frac{1}{3}\mu }{   \mu^\frac{2}{3}}
                             \frac{\ln^\frac{1}{3}\mu'}{{\mu'}^\frac{2}{3}}
\int\nabla\phi_{\mu,R,L}\cdot\nabla\phi_{\mu',R',L'}dx d\mu' d\mu\nonumber\\
&=&2\int_M^\infty\int_M^\mu \frac{\ln^\frac{1}{3}\mu }{   \mu^\frac{2}{3}}
                            \frac{\ln^\frac{1}{3}\mu'}{{\mu'}^\frac{2}{3}}
\int\nabla\phi_{\mu,R,L}\cdot\nabla\phi_{\mu',R',L'}dx d\mu' d\mu\nonumber\\
&\stackrel{\eqref{P2b}}\lesssim&\int_M^\infty\int_M^\mu \frac{\ln^\frac{1}{3}\mu }{   \mu^\frac{2}{3}}
                                  \frac{\ln^\frac{1}{3}\mu'}{{\mu'}^\frac{2}{3}}
\frac{1}{R^2}\frac{1}{\ln\frac{L}{R}}\int\chi_\mu dx d\mu' d\mu\nonumber\\
&{\sim}&
\int_M^\infty\int_M^\mu\frac{\ln^\frac{1}{3}\mu'}{{\mu'}^\frac{2}{3}}d\mu'
\;\int\chi_\mu dx d\mu\nonumber\\
&\lesssim&
\int_M^\infty(\mu\ln\mu)^\frac{1}{3}
\;\int\chi_\mu dx d\mu\nonumber.
\end{eqnarray}
Hence, coming back to \eqref{P2a}, we can absorb this term by Young's inequality and obtain
\begin{equation}\nonumber
\int_M^\infty(\mu\ln\mu)^\frac{1}{3}
\;\int\chi_\mu dx d\mu\lesssim\|\nabla u\|_1+\||\nabla|^{-1}u\|_2^2.
\end{equation}
We conclude by observing that for $M\gg1$
\begin{equation}\nonumber
\int_M^\infty(\mu\ln\mu)^\frac{1}{3}\int\chi_\mu dx\,d\mu
=\int_{\{u>M\}}\int_M^{u(x)}(\mu\ln\mu)^\frac{1}{3}d\mu\,dx
{\gtrsim}\int_{\{u>2M\}}u^\frac{4}{3}\ln^{\frac{1}{3}}u.
\end{equation}
\end{proof}

\section{Proof of Propositions \ref{strong-W} and \ref{strong-W-H}}
In this section we prove Propositions \ref{strong-W},\ref{strong-W-H}, and \ref{cor}. The two main ingredients in the proofs are the geometric construction of Lemma \ref{geom} and
the Kantorovich duality for the Wasserstein distance.

\begin{oss}\label{geom-d}
In the proof of Proposition \ref{strong-W} we need the analog of the geometric construction of Lemma \ref{geom} in any dimension $d$.
Following the proof of Lemma \ref{geom} it is easy to see that given a function $\chi: [0,\Lambda]^d\rightarrow \{0,1\}$ there exists a set $\Omega_R$, which is defined as
$$\Omega_R=\left\{x\;\big|\;|\{\chi=1\}\cap B_\frac{R}{2}(x)|>{\textstyle\frac{1}{2}}|B_\frac{R}{2}(x)|\right\},$$
and $N$ points $y_i\in \Omega_R$ for $i=1,...,N$ such that
\begin{equation}\label{gw1}
 \Omega_R \subset \bigcup_{i=1}^N B_R(y_i)\quad \mbox{and}\quad N\lesssim \frac{1}{R^d}\int \chi,
\end{equation}
where $\{y_1,...,y_N\}$ is maximal with the property that $B_{R/2}(y_i)\cap B_{R/2}(y_j)=\emptyset$ for every $i,j=1,...,N$ such that $i\neq j.$

Similarly to Lemma \ref{geom}, we want to define now a potential $\phi_R$ associated to $\chi$. Since here we are not interested in the logarithmic behaviour of the potential, it is enough to define $\phi_R$ as the characteristic function of $\bigcup_{i=1}^N B_R(y_i)$.
With this choice of $\phi_R$, similarly to \eqref{1}, we have
\begin{equation}\label{gw3}
 \int \chi\lesssim R\int |\nabla \chi| + \int \chi \phi_R.
\end{equation}
\end{oss}
We are now ready to give the proof of our Proposition \ref{strong-W}.

\begin{proof}[Proof of Proposition \ref{strong-W}]
As before, by approximation we may assume that $u$ is a step function.
By a scaling argument in $x$ it is enough to show that there exists a constant $C$ only depending on $d$ such that
\begin{equation}\label{W}
 \|(u-C)_+\|_{\frac{2+3d}{3d}}^{\frac{2+3d}{3d}}\leq C \left(\|\nabla u\|_1+ W_2^2(u,1)\right).
\end{equation}
%
%
To make the proof more readable, we divide it in three steps.

\textbf{Step 1.} We start as in the proof of Proposition \ref{weak-est2} using our geometric construction.
For a given level $\mu$, let as before $\chi_\mu(x)\in\{0,1\}$ denote the characteristic function of the set $\{ u>\mu\}$ and
 let $\phi_{\mu,R}$ be the potential associated to $\chi_\mu$ defined in Remark \ref{geom-d}. By \eqref{gw3} we have
\begin{eqnarray}\label{W-1}
 \int \chi_\mu &\lesssim& R\int |\nabla \chi_\mu|+\int \chi_\mu \phi_{\mu,R}\nonumber \\
&\leq& R\int |\nabla \chi_\mu|+\frac{1}{\mu}\int \phi_{\mu,R} u.
\end{eqnarray}
Let $\varepsilon$ be a small parameter to be adjusted later.
We multiply \eqref{W-1} by $\mu^{(2+3d)/(3d)}$, we choose $ R=C_1^{1/2}\mu^{-2/(3d)}$ (where $C_1$ is a dimensional constant to be specified later) and we integrate in $\int \frac{d\mu}{\mu}$ for $\mu \geq 1/\varepsilon^d$, to get
\begin{equation}\label{W1}
\begin{split}
\int_{1/\varepsilon^d}^{+\infty} \mu^{\frac{2+3d}{3d}}\int\chi_\mu dx \frac{d\mu}{\mu}&\lesssim \int_{1/\varepsilon^d}^{+\infty}\int|\nabla \chi_\mu|dx d\mu  \\
&\hspace{0.5em} + \int\left(\int_{1/\varepsilon^d}^{+\infty}\mu^{\frac{2}{3d}}\phi_{\mu,R}(x)\frac{d\mu}{\mu}\right) u(x)dx.\end{split}\end{equation}
Using the coarea formula as before, the first term on the right-hand side is estimated as follows
$$ \int_{1/\varepsilon^d}^{+\infty}\int|\nabla \chi_\mu|dx d\mu\leq \|\nabla  u\|_1.$$
To estimate the second term on the right-hand side of \eqref{W1}, we set
\begin{equation}\label{phi}\varphi(x):=\int_{1/\varepsilon^d}^{+\infty}\mu^{\frac{2}{3d}}\phi_{\mu,R}(x)\frac{d\mu}{\mu}.\end{equation}
%
%
 Using the Kantorovich duality \eqref{Kan} with $v\equiv 1$, with $\psi$ replaced by $-\psi$, and with cost function $c(x)=|x-y|^2/\varepsilon^2$, we have that
\begin{equation}\label{Ka}
 \int \varphi(x) u(x) dx \leq \frac{1}{\varepsilon^2}W_2^2( u,1)+\int \psi(y)dy,
\end{equation}
where
\begin{equation}\label{psi}\psi(y):=\sup_x\left\{\varphi(x)-\frac{|x-y|^2}{\varepsilon^2}\right\}=\sup_x\left\{\int_{1/\varepsilon^d}^{+\infty}\mu^{\frac{2}{3d}}\phi_{\mu,R}(x)\frac{d\mu}{\mu}-\frac{|x-y|^2}{\varepsilon^2}\right\}.\end{equation}
Combining all together in \eqref{W1} we get
\begin{equation}\label{W2}\int_{1/\varepsilon^d}^{\infty} \mu^{\frac{2+3d}{3d}}\int \chi_\mu dx \frac{d\mu}{\mu}\lesssim \|\nabla u\|_1 + \frac{1}{\varepsilon^2}W_2^2(u,1) + \int \psi(y)dy,\end{equation} where $\psi$ is defined in \eqref{psi}.

\textbf{Step 2.} In this step, we estimate the term $\int \psi$. We will show that, for $\varepsilon$ small enough, it can be absorbed on the left-hand side and this will conclude the proof. In this step, we will assume that the following inequality holds:
\begin{equation}\label{claim1}
\begin{split}
\psi(y)&\leq \sup_x\left\{\int_{1/\varepsilon^d}^{+\infty}\mu^{\frac{2}{3d}}\phi_{\mu,R}(x)\frac{d\mu}{\mu}-\frac{|x-y|^2}{\varepsilon^2}\right\}_+ \\
&\leq 2\int_{1/\varepsilon^d}^{+\infty} \sup_x \left\{C_1\mu^{\frac{2}{3d}}\phi_{\mu,R}(x)-\frac{|x-y|^2}{\varepsilon^2}\right\}_+\frac{d\mu}{\mu}=:\int_{1/\varepsilon^d}^{+\infty}\widetilde \psi_\mu(y) \frac{d\mu}{\mu},
\end{split}
\end{equation}
where, for simplicity of notations, we write $\sup\{f\}_+$ in place of $\left(\sup\{f\}\right)_+$.
%
We will prove this inequality in Step 3 below.
%
%
By \eqref{claim1} we have that
\begin{equation}\label{int-psi}
\int \psi(y)dy \leq \int\left(\int_{1/\varepsilon^d}^{+\infty} \widetilde \psi_\mu(y)\frac{d\mu}{\mu} \right) dy =\int_{1/\varepsilon^d}^{+\infty}\left(\int  \widetilde \psi_\mu(y)dy\right)\frac{d\mu}{\mu}.\end{equation}
We recall that (see Remark \ref{geom-d}) for any $\mu$, $\phi_{\mu,R}$ is the characteristic function of the union of $N$
balls $B_R(y_i)$ for $i=1,...,N$, where $N$ is bounded above by $\frac{1}{R^d}\int \chi_\mu$.

This implies that
\begin{eqnarray*}
 \widetilde \psi_\mu(y)&=&2\sup_x\left\{C_1\mu^{\frac{2}{3d}}\phi_{\mu,R}(x)-\frac{|x-y|^2}{\varepsilon^2}\right\}_+\\
&=&2\left\{\begin{array}{ll} C_1\mu^{\frac{2}{3d}}   \quad \mbox{if}\; y\in B_R(y_i)\;\;\mbox{for some}\; i=1,...,N \\
\max\left\{C_1\mu^{\frac{2}{3d}}-d_y^2/\varepsilon^2, 0 \right\} \quad \mbox{if}\; y\notin B_R(y_i)\;\;\mbox{for every}\; i,\end{array}\right.
\end{eqnarray*}
where $d_y:=\textit{dist}\left(y, \bigcup_{i=1}^N B_R(y_i)\right)$, with $\textit{dist}(y, A)$ denoting the distance between $y$ and the set $A$. Observe that if $y$ is such that
$d_y\geq C_1^{1/2}\varepsilon\mu^{\frac{1}{3d}}$ then $\widetilde \psi_\mu(y)=0$.
Moreover by the choice $R=C_1^{1/2}\mu^{-2/(3d)}$ and by our assumption $\mu\geq 1/\varepsilon^d$, we have $C_1^{1/2}\varepsilon\mu^{\frac{1}{3d}}\geq R$.
Thus $\widetilde \psi_\mu$ is supported in the union of $N$ balls $B_l(y_i)$, with radius $l=R+C_1^{1/2}\varepsilon \mu^{\frac{1}{3d}}\leq 2C_1^{1/2}\varepsilon \mu^{\frac{1}{3d}}.$

Hence we have
\begin{equation*}
\int \widetilde\psi_\mu(y)dy
\lesssim N \mu^{\frac{2}{3d}}l^d \stackrel{\eqref{gw1}}\lesssim \varepsilon^d\mu^{\frac{2+d}{3d}}\cdot \frac{1}{R^d}\int \chi_\mu dx
\lesssim \varepsilon^d\mu^{\frac{2+3d}{3d}}\int\chi_\mu dx.
\end{equation*}
Using \eqref{int-psi} this yields
$$\int \psi(y) dy \leq \int_{1/\varepsilon^d}^\infty \left(\int \widetilde \psi_\mu(y)dy\right) \frac{d\mu}{\mu} \lesssim \varepsilon^d\int_{1/\varepsilon^d}^\infty \mu^{\frac{2+3d}{3d}}\int\chi_\mu dx \frac{d\mu}{\mu}.$$

Combining all together in \eqref{W2} we get that there exists a constant $\widetilde C >0$ only depending on $d$ such that
$$\int_{1/\varepsilon^d}^\infty \mu^{\frac{2+3d}{3d}}\int \chi_\mu dx \frac{d\mu}{\mu}\leq \widetilde C \left(\|\nabla u\|_1 + \frac{1}{\varepsilon^2}W_2^2(u,1) +
 \varepsilon^d\int_{1/\varepsilon^d}^\infty \mu^{\frac{2+3d}{3d}}\int\chi_\mu dx \frac{d\mu}{\mu}\right).$$
Choosing $\varepsilon=(2\widetilde C)^{-1/d}$ we can absorb the last term on the right-hand side to get
$$\int_{2\widetilde C}^{\infty} \mu^{\frac{2+3d}{3d}}\int \chi_\mu dx \frac{d\mu}{\mu}\leq 2\widetilde C\left(\|\nabla u\|_1+(2 \widetilde C)^{2/d}W_2^2(u,1)\right).$$
Evaluating the integral in $\mu$ on the left-hand side we deduce \eqref{W} with $C=4\widetilde C^2$.

\textbf{Step 3.}
In this last step we show \eqref{claim1} and therefore we conclude the proof. By dilation in $\mu$, it is enough to prove that there exists a dimensional constant $C_1$ such that
\begin{equation}\label{claim1a}
\sup_x\left\{\int_{1}^{+\infty}\mu^{\frac{2}{3d}}\phi_{\mu,R}(x)\frac{d\mu}{\mu}-\frac{|x-y|^2}{\varepsilon^2}\right\}_+
\leq 2\int_{1}^{+\infty} \sup_x \left\{C_1\mu^{\frac{2}{3d}}\phi_{\mu,R}(x)-\frac{|x-y|^2}{\varepsilon^2}\right\}_+\frac{d\mu}{\mu}
\end{equation}
We start by proving two technical estimates.\\
\textsl{Claim A.} Given a function $f$, we have the following relations between (discrete) dyadic sums and (continuum) logarithmic integrals:
\begin{equation}\label{claimA}
\frac{1}{2} \int_1^{2} \sum_{k=0}^\infty f(\theta 2^k)d\theta \leq \int_1^{+\infty} f(\mu)\frac{d\mu}{\mu}\leq \int_1^{2} \sum_{k=0}^\infty f(\theta 2^k)d\theta.
\end{equation}
The proof of Claim A is trivial; indeed, using the change of variable $\mu=\theta 2^k$, we have
$$\int_1^2 \sum_{k=0}^\infty f(\theta 2^k)d\theta=\sum_{k=0}^\infty \int_1^{2} f(\theta 2^k)d\theta=\sum_{k=0}^\infty \int_{2^k}^{2^{k+1}}f(\mu)\frac {d\mu}{2^k}\begin{cases}&\displaystyle\leq 2\int_1^{+\infty} f(\mu)\frac{d\mu}{\mu}\\
&\displaystyle\geq \int_1^{+\infty} f(\mu)\frac{d\mu}{\mu}\end{cases}.$$

\textsl{Claim B.} For any fixed $\theta\in \re$ and $k\in \N$, let $\phi_{\theta,k}$ be a characteristic function. Then, for $p>0$, we have that for any $x$ the following estimate holds:
\begin{equation}\label{claimB}
\sum_{k=0}^\infty (\theta2^k)^p \phi_{\theta,k}(x)\leq \frac{2^p}{2^p-1} \sup_{0\leq k <\infty} \{(\theta 2^k)^p\phi_{\theta,k}(x)\}.
\end{equation}
To prove this claim, we set
\begin{equation}\label{K}
K(x):=\sup \{k| \:\phi_{\theta,k}(x)\neq 0\}.
\end{equation}

 %
%
If $K(x)=\infty$ inequality \eqref{claimB} holds trivially since the right-hand side is infinite. If $K(x)< \infty$ then we have
\begin{eqnarray*}
\sum_{k=0}^\infty (\theta 2^k)^p\phi_{\theta,k}(x)&\leq& \sum_{k=0}^{K(x)} (\theta 2^k)^p = \theta^p\frac{2^{p(K(x)+1)}-1}{2^p-1}\\
&\leq& \left(\theta 2^{K(x)}\right)^p\frac{2^p}{2^p-1} = \frac{2^p}{2^p-1} \sup_{0\leq k<\infty} \{(\theta 2^k)^p\phi_{\theta,k}(x)\},\end{eqnarray*}
which concludes the proof of Claim B.

%
%
Now, using \eqref{claimA} and \eqref{claimB} we deduce that \eqref{claim1a} holds.
Indeed, recalling that $\phi_{\mu,R}$ is the characteristic function of a finite union of balls (depending on $\mu$), we have
\begin{eqnarray*}
&&\sup_x \left\{\int_{1}^\infty\mu^{\frac{2}{3d}} \phi_{\mu,R}(x)\frac{d\mu}{\mu}-\frac{|x-y|^2}{\varepsilon^2}\right\}_+\\
&&\hspace{1em}\stackrel{\eqref{claimA}}\leq  \sup_x \left\{\int_1^2\left(\sum_{k=0}^\infty (\theta 2^k)^{\frac{2}{3d}} \phi_{\theta 2^k,R}(x)\right) d\theta -\frac{|x-y|^2}{\varepsilon^2}\right\}_+\\
&&\hspace{1em}=\sup_x\left\{\int_1^2 \left(\sum_{k=0}^\infty (\theta 2^k)^{\frac{2}{3d}} \phi_{\theta 2^k,R}(x)-\frac{|x-y|^2}{\varepsilon^2}\right)d\theta\right\}_+\\
&&\hspace{1em}\leq \sup_x\left\{\int_1^2 \left(\sum_{k=0}^\infty (\theta 2^k)^{\frac{2}{3d}} \phi_{\theta 2^k,R}(x)-\frac{|x-y|^2}{\varepsilon^2}\right)_+d\theta\right\}\\
&&\hspace{1em}=\int_1^2 \sup_x \left(\sum_{k=0}^\infty (\theta 2^k)^{\frac{2}{3d}} \phi_{\theta 2^k,R}(x)-\frac{|x-y|^2}{\varepsilon^2}\right)_+d\theta\\
&&\hspace{1em}\stackrel{\eqref{claimB}}\leq \int_1^2 \sup_x \left(C_1\sup_{0\leq k<\infty}\{(\theta 2^k)^{\frac{2}{3d}}\phi_{\theta 2^k,R}(x)\}-\frac{|x-y|^2}{\varepsilon^2}\right)_+ d\theta \\
&&\hspace{1em} = \int_1^2 \sup_x \left(\sup_{0 \leq k<\infty}\left\{C_1(\theta 2^k)^{\frac{2}{3d}}\phi_{\theta 2^k,R}(x)-\frac{|x-y|^2}{\varepsilon^2}\right\}\right)_+ d\theta \\
&&\hspace{1em} = \int_1^2 \sup_{0\leq k<\infty} \left(\sup_x\left\{C_1(\theta 2^k)^{\frac{2}{3d}}\phi_{\theta 2^k,R}(x)-\frac{|x-y|^2}{\varepsilon^2}\right\}\right)_+ d\theta \\
&&\hspace{1em}\leq  \int_1^2 \sum_{k=0}^\infty \left(\sup_x\left\{C_1(\theta 2^k)^{\frac{2}{3d}}\phi_{\theta 2^k,R}(x)-\frac{|x-y|^2}{\varepsilon^2}\right\}\right)_+ d\theta\\
&&\hspace{1em}\stackrel{\eqref{claimA}} \leq 2
\int_1^{+\infty} \sup_x \left\{C_1\mu^{\frac{2}{3d}}\phi_{\mu,R}(x)-\frac{|x-y|^2}{\varepsilon^2}\right\}_+\frac{d\mu}{\mu},
\end{eqnarray*}
where $C_1=\frac{2^{\frac{2}{3d}}}{2^{\frac{2}{3d}}-1}$ (cf. Claim B).

This concludes the proof of \eqref{claim1} and thus of the proposition.

\end{proof}

We give now the proof of the interpolation estimate in additive form stated in Proposition \ref{strong-W-H}.
\begin{proof}[Proof of Proposition \ref{strong-W-H}]
We follow the same strategy of the proof of Proposition \ref{strong-W} and we divide again the proof in three steps.

\textbf{Step 1.}
We start by showing that inequality \eqref{ineq-H} holds with $\nu=1$, that is we want to prove that for every $u,v>0$ such that $\Lambda^{-d}\int  u=\Lambda^{-d}\int v=\Phi$, there exists a constant $C$ with $\Phi\leq C^{-1}$, such that
\begin{equation}\label{W-H}
\int\left(u-1\right)_+^{\frac{3d+3}{3d+1}}\leq C\left( \|\nabla u\|_1+W_2^2(u,v)+\||\nabla|^{-\frac{1}{2}}(v-\Phi)\|_2^2\right).\end{equation}

We begin as in the proof of Proposition \ref{strong-W}, assuming that $u$ is a step function and using our geometric construction. We have (cf. \eqref{W-1})
\begin{equation}\label{W-H-1}\int \chi_{\mu}\lesssim R\int |\nabla \chi_\mu| +\frac{1}{\mu}\int \phi_{\mu,R} u.\end{equation}
We multiply \eqref{W-H-1} by $\mu^{\frac{3d+3}{3d+1}}$, choose $R=C_2^{1/2}\mu^{-\frac{2}{3d+1}}$ (where $C_2$ is a dimensional constant to be specified later) and  integrate in $\int \frac{d\mu}{\mu}$ for $\mu \geq 1$, to get
\begin{eqnarray}\label{W-H-2}
&&\int_1^\infty \mu^{\frac{3d+3}{3d+1}}\int \chi_\mu(x)dx \frac{d\mu}{\mu}\nonumber \\
&&\hspace{2em}\lesssim \int_1^\infty \int |\nabla \chi_\mu(x)|dx d\mu +\int\left(\int_1^\infty \mu^{\frac{2}{3d+1}} \phi_{\mu,R}(x)\frac{d\mu}{\mu}\right)u(x) dx.\end{eqnarray}
Using the coarea formula as before, the first term on the right-hand side is estimated as
$$\int_1^\infty \int |\nabla \chi_\mu(x)|dx d\mu \leq \|\nabla u\|_1.$$
To estimate the second term on the right-hand side we proceed as in the proof of Proposition \ref{strong-W}. By analogy with \eqref{phi}, we set
$$\varphi(x):=\int_1^\infty \mu^{\frac{2}{3d+1}}\phi_{\mu,R}(x)\frac{d\mu}{\mu}.$$
Using again the Kantorovich duality, we obtain
\begin{equation}\label{K-H}
\int \varphi(x)u(x)dx \leq W_2^2(u,v)+\int \psi(y)v(y)dy,\end{equation}
where analogously to \eqref{claim1} we have
\begin{eqnarray*}
\psi(y)&=&\sup_x\{\varphi(x)-|x-y|^2\}\\
&\leq&\sup_x\left\{\int_1^{+\infty}\mu^{\frac{2}{3d+1}}\phi_{\mu,R}(x)\frac{d\mu}{\mu}-|x-y|^2\right\}_+\\
&\leq& 2\int_1^{+\infty} \sup_x \left\{C_2\mu^{\frac{2}{3d+1}}\phi_{\mu,R}(x)-|x-y|^2\right\}_+\frac{d\mu}{\mu},\end{eqnarray*}
where $C_2=\frac{2^{\frac{2}{3d+1}}}{2^{\frac{2}{3d+1}}-1}$.

%
Plugging \eqref{K-H} into \eqref{W-H-2}, we get
\begin{equation}\label{W-H-3}\int_1^\infty \mu^{\frac{3d+3}{3d+1}}\int \chi_\mu(x)dx \frac{d\mu}{\mu}\lesssim \|\nabla u\|_1 + W_2^2(u,v) + \int \psi(y)v(y)dy.\end{equation}

\textbf{Step 2.} In this step we estimate the term $\int \psi v$.
In order to do that, we define the functions
$$\widetilde\psi_\mu(y):=2\sup_x\left\{C_2\mu^{\frac{2}{3d+1}} \phi_{\mu,R}(x)-|x-y|^2\right\}_+\quad\mbox{and}\quad \widetilde \psi(y):=\int_1^{\infty}\widetilde\psi_\mu(y)\frac{d\mu}{\mu}.$$
The second term on  the right-hand side of \eqref{K-H} is bounded by
\begin{equation}\label{dual}
\begin{split}
\int \psi v &\leq \int \widetilde \psi v= \int \widetilde \psi (v-\Phi) + \Phi \int \widetilde \psi \\
& \leq \|\ |\nabla|^{\frac{1}{2}}\widetilde \psi\|_2\| |\nabla|^{-\frac{1}{2}}(v-\Phi)\|_2+\Phi \int \widetilde \psi .\end{split}\end{equation}
We give now an estimate for the quantity
$\| |\nabla|^{\frac{1}{2}} \widetilde \psi\|_2$.
We recall that here $\dot H^{1/2}$ refers to the homogeneous fractional Sobolev space, endowed with the seminorm defined by
$$\| |\nabla|^{\frac{1}{2}}f\|_2(\re^d):=\int_{\re^d}\int_{\re^d}\frac{|f(x)-f(\overline x)|^2}{|x-\overline x|^{d+1}}dx d\overline x.$$
We have
\begin{eqnarray*}
\||\nabla|^{\frac{1}{2}}\widetilde \psi\|^2_2&=& \left\| |\nabla|^{\frac{1}{2}}\int_1^{\infty}\widetilde \psi_{\mu}\frac{d\mu}{\mu}\right\|^2_2\\
&=&\int_{\re^d}\int_{\re^d}\left(\int_1^{\infty}\int_1^\infty
\frac{(\widetilde \psi_{\mu}(y)-\widetilde \psi_{\mu}(\overline y))(\widetilde \psi_{\mu'}(y)-\widetilde \psi_{\mu'}(\overline y))}{|y-\overline y|^{d+1}}\frac{d\mu}{\mu}\frac{d\mu'}{\mu'}\right)dy d\overline y\\
&=& 2\int_1^{\infty}\frac{d\mu}{\mu}\int_1^\mu \frac{d\mu'}{\mu'} \left(\int_{\re^d}\int_{\re^d}
\frac{(\widetilde \psi_{\mu}(y)-\widetilde \psi_{\mu}(\overline y))(\widetilde \psi_{\mu'}(y)-\widetilde \psi_{\mu'}(\overline y))}{|y-\overline y|^{d+1}}dy d\overline y\right)\\
&=:&2\int_1^{\infty}\frac{d\mu}{\mu}\int_1^\mu \frac{d\mu'}{\mu'}I^{\mu,\mu'},
\end{eqnarray*}
where in the last equality we have changed the order of integration and we have used the symmetry between $\mu$ and $\mu'$.
To estimate the quantity $I^{\mu,\mu'}$ we proceed as follows. First recall that, since $\phi_{\mu,R}$ is the characteristic function of the union of $N$ balls $B_R(y_i)$, we have
\begin{eqnarray}\label{psi-mu-H}
 \widetilde \psi_\mu(y)&=&2\left(\sup_x\left\{C_2\mu^{\frac{2}{3d+1}}\phi_{\mu,R}(x)-|x-y|^2\right\}\right)_+\nonumber \\
&=&2\left\{\begin{array}{ll} C_2\mu^{\frac{2}{3d+1}}   \quad \mbox{if}\; y\in B_R(y_i)\;\;\mbox{for some}\; i \\
\max\left\{C_2\mu^{\frac{2}{3d+1}}-d_y^2, 0 \right\} \quad \mbox{if}\; y\notin B_R(y_i)\;\;\mbox{for every}\; i.\end{array}\right.
\end{eqnarray}
where $d_y:=\textit{dist}(y, \bigcup_{i=1}^N B_R(y_i))\}$. Observe that if $y$ is such that
$d_y\geq C_2^{1/2}\mu^{\frac{1}{3d+1}}$ then $\widetilde \psi_\mu(y)=0$, and that by the choice $R=C_2^{1/2}\mu^{-2/(3d+1)}$ we have $\mu^{\frac{1}{3d+1}}\geq R$, for $\mu\geq 1$. Thus $\widetilde \psi_\mu$ is supported in the union of $N$ balls $B_l(y_i)$, with radius $l= R+ C_2^{1/2}\mu^{\frac{1}{3d+1}}\leq 2 C_2^{1/2}\mu^{\frac{1}{3d+1}}$.
Moreover recall that the number $N$ of balls is bounded by
\begin{equation}\label{N1}
N \lesssim \frac{1}{R^d}\int \chi_\mu.\end{equation}
By \eqref{psi-mu-H}, we see that $\widetilde \psi_\mu$ satisfies
\begin{equation}\label{L-infty} |\widetilde \psi_\mu|\lesssim \mu^{\frac{2}{3d+1}},\end{equation}
\begin{equation}\label{grad}
|\nabla \widetilde \psi_\mu|\lesssim \frac{\mu^{\frac{2}{3d+1}}}{l} \lesssim \mu^{\frac{1}{3d+1}}.\end{equation}
Using the fact that $\widetilde \psi_\mu$ is supported in $\cup_{i=1}^N B_l(y_i)$ and the symmetry between $y$ and $\overline y$, we have
\begin{eqnarray*}
I^{\mu,\mu'}&\leq& 2\sum_{i=1}^{N}\int_{B_l(y_i)}\int_{\re^n}\frac{(\widetilde \psi_{\mu}(y)-\widetilde \psi_{\mu}(\overline y))(\widetilde \psi_{\mu'}(y)-\widetilde \psi_{\mu'}(\overline y))}{|y-\overline y|^{d+1}}dy d\overline y\\
&=& 2\sum_{i=1}^{N}\int_{B_l(y_i)}\int_{\{\overline y:|y-\overline y|\leq r\}}\frac{(\widetilde \psi_{\mu}(y)-\widetilde \psi_{\mu}(\overline y))(\widetilde \psi_{\mu'}(y)-\widetilde \psi_{\mu'}(\overline y))}{|y-\overline y|^{d+1}}dy d\overline y\\
&&\hspace{0.2em} + 2\sum_{i=1}^{N}\int_{B_l(y_i)}\int_{\{\overline y:|y-\overline y|> r\}}\frac{(\widetilde \psi_{\mu}(y)-\widetilde \psi_{\mu}(\overline y))(\widetilde \psi_{\mu'}(y)-\widetilde \psi_{\mu'}(\overline y))}{|y-\overline y|^{d+1}}dy d\overline y\\
&=& 2\sum_{i=1}^N\left(I^{\mu, \mu'}_{i,1}+I^{\mu, \mu'}_{i,2}\right).\end{eqnarray*}
To bound the first term $I^{\mu, \mu'}_{i,1}$, we use that $l\lesssim\mu^{1/(3d+1)}$, the gradient bound \eqref{grad} for $\widetilde \psi_\mu$, and spherical coordinates centred at $y$. We get
\begin{eqnarray*}
I^{\mu, \mu'}_{i,1}&=& \int_{B_l(y_i)}\int_{\{\overline y:|y-\overline y|\leq r\}}\frac{(\widetilde \psi_{\mu}(y)-\widetilde \psi_{\mu}(\overline y))(\widetilde \psi_{\mu'}(y)-\widetilde \psi_{\mu'}(\overline y))}{|y-\overline y|^{d+1}}dy d\overline y\\
&\lesssim& |B_l(y_i)|\: \|\nabla \widetilde \psi_\mu\|_{\infty}\: \|\nabla \widetilde \psi_{\mu'}\|_{\infty}\int_0^r \frac{\rho^2\rho^{d-1}}{\rho^{d+1}}d\rho\\
&\lesssim& l^d \mu^{\frac{1}{3d+1}}\mu'^{\frac{1}{3d+1}}r=\mu^{\frac{d}{3d+1}}\mu^{\frac{1}{3d+1}}\mu'^{\frac{1}{3d+1}}r.
\end{eqnarray*}
On the other hand, using now the $L^\infty$-bound \eqref{L-infty} for $\widetilde \psi_\mu$, we deduce
\begin{eqnarray*}
I^{\mu, \mu'}_{i,2}&=& \int_{B_l(y_i)}\int_{\{\overline y:|y-\overline y|> r\}}\frac{(\widetilde \psi_{\mu}(y)-\widetilde \psi_{\mu}(\overline y))(\widetilde \psi_{\mu'}(y)-\widetilde \psi_{\mu'}(\overline y))}{|y-\overline y|^{d+1}}dy d\overline y\\
&\lesssim& |B_l(y_i)|\: \| \widetilde \psi_\mu\|_{\infty}\: \| \widetilde \psi_{\mu'}\|_{\infty}\int_r^\infty \frac{\rho^{d-1}}{\rho^{d+1}}d\rho\\
&\lesssim& l^d \mu^{\frac{2}{3d+1}}\mu'^{\frac{2}{3d+1}}\frac{1}{r}=\mu^{\frac{d}{3d+1}}\mu^{\frac{2}{3d+1}}\mu'^{\frac{2}{3d+1}}\frac{1}{r}.
\end{eqnarray*}
Optimizing the sum $I^{\mu, \mu'}_{i,1}+I^{\mu, \mu'}_{i,2}$ in $r$ , we get for $r=\mu^{\frac{1}{2(3d+1)}}\mu'^{\frac{1}{2(3d+1)}}$
\begin{eqnarray*}I^{\mu,\mu'}&\leq& 2\sum_{i=1}^N\left(I^{\mu, \mu'}_{i,1}+I^{\mu, \mu'}_{i,2}\right)\lesssim N \mu^{\frac{2d+3}{2(3d+1)}}\mu'^{\frac{3}{2(3d+1)}}\\
&\lesssim& \frac{1}{R^d} \mu^{\frac{2d+3}{2(3d+1)}}\mu'^{\frac{3}{2(3d+1)}}\int \chi_\mu dx \lesssim \mu^{\frac{2d}{3d+1}}\mu^{\frac{2d+3}{2(3d+1)}}\mu'^{\frac{3}{2(3d+1)}}\int \chi_\mu dx,
\end{eqnarray*}
where we have used the bound \eqref{N1} for $N$ and the choice $R=C_2^{1/2}\mu^{-2/(3d+1)}$.
Finally, integrating in $\mu$ and $\mu'$ we get
\begin{equation}\label{H1/2}
\begin{split}
\||\nabla|^{\frac{1}{2}}\widetilde \psi\|^2_2&=2\int_{1}^{+\infty}\frac{d\mu}{\mu}\int_1^{\mu}\frac{d\mu'}{\mu'} I^{\mu,\mu'}\\
&\lesssim \int_1^{\infty} \frac{d\mu}{\mu}\int_1^{\mu} \frac{d\mu'}{\mu'} \mu^{\frac{6d+3}{2(3d+1)}} \mu'^{\frac{3}{2(3d+1)}} \int dx \:\chi_\mu  \\
&\lesssim  \int_1^{\infty}   \mu^{\frac{3d+3}{3d+1}} \int  \chi_\mu dx \frac{d\mu}{\mu}.\end{split}\end{equation}
We now estimate the second term $\Phi\int\widetilde\psi$ appearing in \eqref{dual}. By an analogue but simpler computation, we deduce that
\begin{eqnarray*}
\int \widetilde \psi_\mu (y)dy \lesssim N \mu^{\frac{2}{3d+1}} l^d \lesssim R^{-d} \mu^{\frac{2}{3d+1}} l^d \int \chi_\mu \sim \mu^{\frac{2d}{3d+1}}\mu^{\frac{2}{3d+1}}\mu^{\frac{d}{3d+1}}\int \chi_\mu =\mu^{\frac{3d+2}{3d+1}}\int \chi_\mu.\end{eqnarray*}
Thus, plugging this into $\Phi\int\widetilde\psi$, we get
\begin{equation}\label{last}
\Phi \int \widetilde \psi =\Phi \int_1^{+\infty}\left(\int \widetilde \psi_{\mu}(y)dy\right)\frac{d\mu}{\mu} \lesssim \Phi \int_1^{+\infty} \mu^{\frac{3d+2}{3d+1}}\int \chi_\mu dx \frac{d\mu}{\mu}.
\end{equation}
Plugging \eqref{dual}, \eqref{H1/2}, and \eqref{last} into \eqref{W-H-3}, we deduce that there exists a constant $C>0$ such that
%
\begin{align}
&\int_1^{\infty}\mu^{\frac{3d+3}{3d+1}}\int \chi_\mu \frac{d\mu}{\mu}
\leq C \|\nabla u\|_1
+ CW_2^2(u,v)+\\
&\hspace{2em} +C\left(\| |\nabla|^{-\frac{1}{2}}(v-\Phi)\|^2_2\int_1^{\infty} \mu^{\frac{3d+3}{3d+1}}\int \chi_\mu dx \frac{d\mu}{\mu}\right)^{1/2}+C\Phi \int_1^{\infty} \mu^{\frac{3d+2}{3d+1}}\int \chi_\mu dx \frac{d\mu}{\mu}.
\end{align}
%
Since $(3d+2)/(3d+1)<(3d+3)/(3d+1)$, we can absorbe the last term on the right-hand side for $\Phi \leq \frac{1}{2C}$ . Moreover, using Young inequality, we can absorb the second factor of the third term on the right-hand side.
Finally we evaluate the integral in $\mu$ on the left-hand side to get
\begin{equation}\label{nu=1}
 \int(u-1)_+^{\frac{3d+3}{3d+1}} \leq 2C\left(\|\nabla u\|_1 +W_2^2(u,v)+ \||\nabla|^{-\frac{1}{2}}(v-\Phi)\|^2_2\right).\end{equation}

\textbf{Step 3.}
In this last step we show that, by scaling, the following inequality in additive form containing the parameter $\nu$ holds for $\Phi \leq \nu^{\frac{3d+1}{3d+3}}/(2C)$
\begin{equation}\label{nu+}
\int \left(u-\nu^{\frac{3d+1}{3d+3}}\right)_+^{\frac{3d+3}{3d+1}}\leq 2C\left(\|\nabla u\|_1+ \nu^{\frac{2}{d+1}} W_2^2(u,v)+\nu^{\frac{1-d}{d+1}}\||\nabla|^{-\frac{1}{2}}(v-\Phi)\|_2^2\right).\end{equation}
Indeed, set $u=M\hat u,$ $v=M\hat v,$ $\Phi=M\hat\Phi$, and $x=L \hat x$, by \eqref{nu=1} we have for $\hat\Phi \leq (2CM)^{-1}$
\begin{equation*}
L^d M^{\frac{3d+3}{3d+1}}\int\left(\hat u-M^{-1}\right)_+^{\frac{3d+3}{3d+1}} \lesssim L^{d-1} M \int |\hat \nabla \hat u| + L^{d+2}M W_2^2(\hat u, \hat v)+L^{d+1}M^2\||\nabla|^{-\frac{1}{2}}(\hat v-\hat \Phi)\|^2_2.\end{equation*}
We divide by $L^d M^{\frac{3d+3}{3d+1}}$ to get
$$\int\left(\hat u-M^{-1}\right)_+^{\frac{3d+3}{3d+1}} \lesssim L^{-1} M^{-\frac{2}{3d+1}} \int |\hat \nabla \hat u| + L^{2}M^{-\frac{2}{3d+1}} W_2^2(\hat u, \hat v)+L M^{\frac{3d-1}{3d+1}}\||\nabla|^{-\frac{1}{2}}(\hat v-\hat \Phi)\|^2_2.$$ Choosing $L=M^{-\frac{2}{3d+1}}$ we have
$$\int \left(\hat u-M^{-1}\right)_+^{\frac{3d+3}{3d+1}} \lesssim   \int |\hat \nabla \hat u| + M^{-\frac{6}{3d+1}} W_2^2(\hat u, \hat v)+M^{\frac{3d-3}{3d+1}}\||\nabla|^{-\frac{1}{2}}(\hat v-\hat \Phi)\|^2_2.$$
This implies \eqref{nu+} for $\nu=M^{-\frac{3d+3}{3d+1}}$.
\end{proof}
Finally, we prove Proposition \ref{cor}, which follows easily by Proposition \ref{strong-W-H} by scaling arguments.
\begin{proof}[Proof of Proposition \ref{cor}]
As a corollary of Proposition \ref{strong-W-H}, we deduce that given two nonnegative functions $u,v:\re^d \rightarrow \re$ with $\int u=\int v <\infty$, the following inequality in additive form holds
$$\int \left(u-\nu^{\frac{3d+1}{3d+3}}\right)_+^{\frac{3d+3}{3d+1}}\lesssim \|\nabla u\|_{1}+\nu^{\frac{2}{d+1}}W_2^2(u,v)+\nu^{\frac{1-d}{d+1}}\||\nabla|^{-\frac{1}{2}}v\|_2^2.$$
We pass from the additive form to the multiplicative one, by scaling in $x$. Indeed for $x=L\hat x$, we have
\begin{equation*}
\begin{split}
&\int \left(u-\nu^{\frac{3d+1}{3d+3}}\right)_+^{\frac{3d+3}{3d+1}}\lesssim L^{-1}\|\hat \nabla u\|_1 \\
& \hspace{1em}+ L^{\frac{2d}{d+1}}\left( (\nu L)^{\frac{2}{d+1}}W_2^2(u,v)+(\nu L)^{\frac{1-d}{d+1}} \||\nabla|^{-\frac{1}{2}}v\|_2^2\right).\end{split}\end{equation*}
Setting $\hat \nu=\nu L$ and choosing $L=\left(\|\hat \nabla u\|_1\right)^{\frac{d+1}{3d+1}}\left(\hat\nu ^{\frac{2}{d+1}}W_2^2(u,v)+\hat\nu ^{\frac{1-d}{d+1}} \||\nabla|^{-\frac{1}{2}}v\|_2^2\right)^{-\frac{d+1}{3d+1}}$, we deduce
\begin{equation*}
\int \left(u-\nu^{\frac{3d+1}{3d+3}}\right)_+^{\frac{3d+3}{3d+1}}\lesssim \left(\|\hat \nabla u\|_1\right)^{\frac{2d}{3d+1}} \left(
 \hat\nu^{\frac{2}{d+1}}W_2^2(u,v)+\hat\nu^{\frac{1-d}{d+1}}\||\nabla|^{-\frac{1}{2}}v\|_2^2\right)^{\frac{d+1}{3d+1}}.\end{equation*}
 Taking the supremum in $\nu$ (that is the supremum in $\hat \nu$ on the right-hand side, since $\hat \nu=L\nu$) and raising to the power $(3d+1)/(3d+3)$ we conclude the proof.
\end{proof}

\noindent
\textbf{Acknowledgement}: We would like to thank M. Ledoux for useful discussions on the topic of this paper.

\end{document}